\DeclareMathAlphabet{\mathpzc}{OT1}{pzc}{m}{it}
\begin{document}

\theoremstyle{plain}

\newtheorem{theorem}{Theorem}[section]
\newtheorem{lemma}[theorem]{Lemma}
\newtheorem{proposition}[theorem]{Proposition}
\newtheorem{corollary}[theorem]{Corollary}
\newtheorem{definition}[theorem]{Definition}
\theoremstyle{definition}
\newtheorem{listing}[theorem]{Assumption}
\newtheorem{example}[theorem]{Example}
\newtheorem{remark}[theorem]{Remark}
\newtheorem{assumption}[theorem]{Assumption}
\newtheorem{SA}[theorem]{Standing Assumption}
\newtheorem{condition}[theorem]{Condition}

\renewcommand{\chapterautorefname}{Chapter} 
\renewcommand{\sectionautorefname}{Section}

\newcommand{\on}{\operatorname}
\crefname{theorem}{theorem}{theorems}
\Crefname{theorem}{Theorem}{Theorems}
\crefname{lemma}{lemma}{lemmas}
\Crefname{lemma}{Lemma}{Lemmata}
\crefname{proposition}{proposition}{propositions}
\Crefname{proposition}{Proposition}{Propositions}
\crefname{corollary}{corollary}{corollaries}
\Crefname{corollary}{Corollary}{Corollaries}
\crefname{example}{example}{examples}
\Crefname{example}{Example}{Examples}
\crefname{definition}{definition}{definitions}
\Crefname{definition}{Definition}{Definitions}
\crefname{remark}{remark}{remarks}
\Crefname{remark}{Remark}{Remarks}
\Crefname{listing}{Assumption}{Assumptions}
\crefname{condition}{condition}{conditions}
\Crefname{condition}{Condition}{Conditions}

\newcommand{\Y}{\mathfrak{X}}
\renewcommand{\k}{\kappa}
\newcommand{\bC}{\mathbf{C}}
\newcommand{\loc}{{\mathrm{loc}}}
\newcommand{\of}{[\hspace{-0.06cm}[}
\newcommand{\gs}{]\hspace{-0.06cm}]}
\newcommand{\A}{\mathbf{A}}
\newcommand{\B}{\mathbb{B}}
\newcommand{\p}{\mathfrak{p}}
\newcommand{\Q}{\mathds{Q}}
\newcommand{\q}{\mathfrak{q}}
\newcommand{\U}{\mathfrak{U}}
\renewcommand{\u}{\mathfrak{u}}
\let\SETMINUS\setminus
\renewcommand{\setminus}{\backslash}
\newcommand{\s}{\mathfrak{s}}
\newcommand{\m}{\mathfrak{m}}
\def\stackrelboth#1#2#3{\mathrel{\mathop{#2}\limits^{#1}_{#3}}}
\newcommand{\bF}{\mathbf{F}}
\newcommand{\bcF}{\mathbf{F}^\circ}
\newcommand{\cM}{\mathcal{M}}
\newcommand{\z}{\mathsf{Z}}
\renewcommand{\U}{\mathsf{U}}

\newcommand\myrot[1]{\mathrel{\rotatebox[origin=c]{#1}{$\Longrightarrow$}}}
\newcommand\NEarrow{\myrot{45}}
\newcommand\SEarrow{\myrot{-45}}

\renewcommand{\theequation}{\thesection.\arabic{equation}}
\numberwithin{equation}{section}
\newcommand{\X}{\mathsf{X}}
\newcommand{\Z}{\mathcal{Z}^\circ}
\newcommand{\disc}{\operatorname{disc}}

\renewcommand{\theequation}{\thesection.\arabic{equation}}
\numberwithin{equation}{section}

\newcommand\llambda{{\mathchoice
		{\lambda\mkern-4.5mu{\raisebox{.4ex}{\scriptsize$\backslash$}}}
		{\lambda\mkern-4.83mu{\raisebox{.4ex}{\scriptsize$\backslash$}}}
		{\lambda\mkern-4.5mu{\raisebox{.2ex}{\footnotesize$\scriptscriptstyle\backslash$}}}
		{\lambda\mkern-5.0mu{\raisebox{.2ex}{\tiny$\scriptscriptstyle\backslash$}}}}}

\newcommand{\tr}{\operatorname{tr}}
\newcommand{\dd}{d}
\newcommand{\cadlag}{c\`adl\`ag }

\newcommand{\F}{\mathbf{F}}
\newcommand{\1}{\mathds{1}}
\newcommand{\la}{\langle}
\newcommand{\ra}{\rangle}
\newcommand{\lle}{\langle\hspace{-0.085cm}\langle}
\newcommand{\rre}{\rangle\hspace{-0.085cm}\rangle}
\renewcommand{\epsilon}{\varepsilon}

\newcommand{\cF}{\mathcal{F}}
\newcommand{\ccF}{\mathcal{F}^\circ}
\newcommand{\Jto}{\to_{J_1}}
\newcommand{\Mto}{\to_{M_1}}
\newcommand{\D}{\mathbb{D}}
\newcommand{\C}{\mathbb{C}}
\newcommand{\N}{N}

\makeatletter
\@namedef{subjclassname@2020}{%
	\textup{2020} Mathematics Subject Classification}
\makeatother

\title[]{The Martingale Problem Method Revisited} 
\author[D. Criens]{David Criens}
\author[P. Pfaffelhuber]{Peter Pfaffelhuber}
\author[T. Schmidt]{Thorsten Schmidt}
\thanks{DC acknowledges financial support from the DFG project No. SCHM 2160/15-1.}
\address{Albert-Ludwigs University of Freiburg, Ernst-Zermelo-Str. 1, 79104 Freiburg, Germany}
\email{david.criens@stochastik.uni-freiburg.de}
\email{peter.pfaffelhuber@stochastik.uni-freiburg.de}
\email{thorsten.schmidt@stochastik.uni-freiburg.de}

\keywords{\vspace{1ex}Martingale problem, path space, fixed times of discontinuity, limit theorems, semimartingales, weak-strong convergence, stable convergence, Skorokhod topology, local uniform topology, Volterra equations}

\subjclass[2020]{60G07 (primary), 60F17, 60H15, 60G17 (secondary)}

\date{\today}
\maketitle

\frenchspacing
\pagestyle{myheadings}

\begin{abstract}
We use the abstract method of (local) martingale problems in order to give criteria for convergence of stochastic processes. Extending previous notions, the formulation we use is neither restricted to Markov processes (or semimartingales), nor to continuous or \cadlag paths. We illustrate our findings both, by finding generalizations of known results, and proving new results. For the latter, we work on processes with fixed times of discontinuity.
\end{abstract}

 \tableofcontents

\section{Introduction}
This article deals with one of the  classical questions in probability theory: Limit theorems for stochastic processes. Starting with the work of  Prokhorov \cite{doi:10.1137/1101016}, limit theorems are formulated via weak convergence of probability measures on function spaces, such as the Wiener or the Skorokhod space. Prokhorov's method for proving weak convergence consists of three steps: Verifying tightness, showing the convergence of the finite dimensional distributions and explaining that these determine the limit. The first part is well-studied and the final part is essentially trivial. In contrast, verifying convergence of the finite dimensional distributions is often hard and sometimes even impossible. This difficulty motivated the necessity to develop further techniques for proving limit theorems. One of the most successful strategies is the \emph{martingale problem method} initiated by Stroock and Varadhan \cite{SV}. Instead of studying the finite dimensional distributions, the idea is to deduce the martingale property of certain test processes (which typically are functionals of the process to be described, and usually abbreviated $\Y$ in the sequel) from weak convergence and to show that these martingale properties characterize the limiting law uniquely. 

Originally, Stroock and Varadhan developed their method for It\^o diffusions, see also Example \ref{ex: SV} below. Later, Ethier and Kurtz \cite{EK} generalized it to a Markovian framework with Polish state space \(E\), i.e. they considered a martingale problem described by test processes of the type 
\begin{align}\label{eq: TP EK}
f (X) - \int_0^\cdot g (X_s) ds, \quad (f, g) \in A \subset C_b(E) \times C_b(E),
\end{align}
where $A$ is sometimes referred to as the (pre-)generator of $X$, see also Example \ref{ex: EK} below. In the spirit of the general theory of stochastic processes, Jacod and Shiryaev \cite{JS} extended the martingale problem method to semimartingales, which have not necessarily Markovian dynamics, by relating martingale properties to the so-called characteristics of the semimartingale. The path space used in the monographs \cite{EK,JS} is the Skorokhod space of \cadlag functions. 

Recently, there is  growing interest in processes which are not covered in \cite{EK,JS}.
Examples for such are solutions to stochastic partial differential equations (SPDEs) and Volterra equations (VSDEs), or certain processes with fixed times of discontinuity, which generate more and more attention in applied probability (see, e.g. \cite{10.1214/16-ECP6,10.1214/19-AAP1483}) and mathematical finance (see, e.g. \cite{BelangerShreveWong2004,Fon2020,gehmlich2018dynamic,Merton1974}). 
Providing a more concrete example, fixed times of discontinuity arise naturally in the context of processes in random environment via the quenched perspective, i.e. when the random environment is fixed, see Section~\ref{sec: motivation} below for more details.
The presence of fixed times of discontinuities turns out to be a major difficulty when one works with the classical Skorokhod space. The problem stems from the fact that continuity properties of the test processes are needed to apply the continuous mapping theorem. To see where this issue comes into play, consider the natural generalization of \eqref{eq: TP EK} when discontinuities are present:
\begin{align} \label{eq: test EK FTD}
f(X_t) - \int_0^t g (X_{s-}) q(ds), \quad (f, g) \in A \subset C_b(E) \times C_b(E),
\end{align}
where $q$ is some deterministic locally finite measure. Note that these random variables  are not necessarily continuous in the Skorokhod topology when \(q\) has point masses, as projections to fixed times are in general not continuous in the Skorokhod topology. Limit theorems for certain types of SPDEs and VSDEs were proved in \cite{jaber:hal-02279033, criens20convCMP, doi:10.1080/07362999708809484}. However,  for processes with fixed times of discontinuity we are not aware of any systematic study. 

~

The aim of this article is to develop a version of the martingale problem method which is flexible enough to cover existing convergence results and which can be used to establish new results e.g. for processes with fixed times of discontinuity.
We generalize the three main ingredients in the standard theory: (i) the state space, (ii) the set of test processes, i.e. we also allow different test processes as for instance in \eqref{eq: TP EK}, and (iii) we work with an extended type of weak convergence called \emph{weak-strong convergence} in the sequel, see~Section~\ref{sec: WS conv} below. 

For (i), we follow the idea to consider laws of stochastic processes as distributions of random variables in function spaces, tracing back to Prokhorov's  work. The classical choices for such path spaces are the Wiener space of continuous functions and the Skorokhod space of \cadlag functions. Almost all general results in the literature are formulated for these two cases. However, many processes of recent interest have less regular paths.
For example, the Volterra processes studied in \cite{jaber:hal-02279033} only have paths in (local) \(L^p\) spaces. 
To include these, and more general cases, we work with the minimal assumption that paths can be viewed as random variables in some Polish space which is not specified further.

For (ii), the idea to generalize the set of test processes stems from the observation that general processes of interest are neither semimartingales nor can be captured via test processes of the form \eqref{eq: TP EK}. For instance, we think of solutions to S(P)DEs with path dependent coefficients or truly \emph{local} martingale problems, where the structure of the test martingales is affected by stopping times.
To capture these examples we introduce an \emph{abstract martingale problem} (see Definition \ref{def:MP}) whose only input is an abstract set of test processes, denoted $\Y$ in the sequel. In Section \ref{sec: AMP} below we relate this martingale problem to several classical examples from the literature such as the martingale problems introduced in the monographs \cite{EK,JS,SV}.

Finally, for (iii), consider a sequence \((X^n)_{n \in \mathbb{N}}\) of processes with dynamics
\begin{equation} \label{eq: branching dynamics intro} \begin{split}
X^n = X_0 &+ \int_0^\cdot \int H^n (t, y) (\p^n - \q^n) (dt, dy), 
\end{split}
\end{equation}
where \(\p^n - \q^n\) is a compensated integer-valued random measure which allows fixed times of discontinuity, and \(H^n\) is a sufficiently measurable function such that the stochastic integral is well-defined. To fix ideas, suppose that we want to show convergences of \((X^n)_{n \in \mathbb{N}}\) to a process \(X\) which is also a stochastic integral w.r.t. a compensated random measure \(\p^* - \q^*\).
In the standard theory the sequence \((H^n)_{n \in \mathbb{N}}\) should then converge (in a suitable sense) to a limiting function \(H = H(X, t, y)\) which is Skorokhod continuous in the \(X\) variable for \(\q^*\)-a.a. pairs \((t, y)\). However, this assumption is typically too strong in the presence of fixed times of discontinuity.
For instance, it does not even hold in the state dependent case where \(H = H(X_{t-}, y)\).
To overcome this problem we would like to relax the continuity assumption by replacing the Skorokhod topology with a stronger topology (in which more functions are continuous). At the same time we might not want to drop the Skorokhod topology in general, as for instance tightness is well-studied for this topology.
To achieve this we introduce the notion of \emph{weak-strong convergence} to the martingale problem method. Weak-strong convergence is a certain type of convergence of probability measures on a product space where one coordinate converges in the usual weak sense and the other converges in a rather strong sense comparable to convergence in total variation. We learned about weak-strong convergence and its power from Jacod and M\'emin \cite{SPS_1981__15__529_0} and their work \cite{doi:10.1080/17442508108833169,JM81} on stability results for stochastic differential equations driven by semimartingales. The notion can be seen as a generalization of the more classical concept of \emph{stable convergence}, see Remark~\ref{rem: ws conv, in prob} below. Let us explain how we use weak-strong convergence to obtain limit theorems for processes with fixed times of discontinuity.
The notion allows us to introduce certain \emph{control variables} whose laws are supposed to converge in a rather strong sense. To use the continuous mapping theorem for weak-strong convergence (see Theorem \ref{coro: CMT}) it suffices to ask for continuity conditions when the values of the controls are fixed. Roughly speaking, this means that we can restrict our attention to a (randomized) subset of the underlying space on which continuity holds conditionally. For instance, with regard to \eqref{eq: branching dynamics intro}, if the coefficients \(H^n\) are (suitably) dominated we can restrict our attention to a subset where conditioned on the value of the control, which is related to the dominator of \(H^n, \p^n\) and \(\q^n\), the Skorokhod topology coincides with the stronger local uniform topology in which e.g. state dependent coefficients \(H = H(X_{t-}, y)\) are continuous for continuous \(H\). 
We stress that we do not require that almost all paths of the limit take values in a subspace of the Skorokhod space on which the Skorokhod and the local uniform topology coincide, but we allow the subspace to be random in some sense. 
This strategy also keeps the Skorokhod topology for the general mode of convergence and therefore one can use well-known results on tightness. 

Our main abstract results are Theorems \ref{theo: main1 pre} and \ref{theo: main approx}. In the former, we show the martingale property of test processes of the limiting martingale problem directly. In the latter, we verify the martingale property using approximating sequences of martingales. 
To illustrate applications of our theory we discuss a variety of examples. First of all, we show that our results cover, or even extend, several known limit theorems. More precisely, we reprove a classical theorem of Ethier and Kurtz \cite{EK} (see Section \ref{sec: rec EK}) and the stability result for Volterra processes from \cite{jaber:hal-02279033} (see Section \ref{sec: rec VSDE}). Furthermore, we localize conditions by Jacod and Shiryaev \cite{JS} which identify a weak limit as a semimartingale via its semimartingale characteristics (see Section \ref{sec: rec JS}). We think that this extension is of interest for future applications. 

Besides recovering results from the literature, we also present new results. 
First, we prove a version of the Ethier--Kurtz theorem for test processes of the type \eqref{eq: test EK FTD}, see Section \ref{sec: EK FTD} below. 
We also present a tightness condition which is tailored to such processes.
Second, we derive a stability result for semimartingales under a continuity assumption on the characteristics in the local uniform topology, see Section \ref{sec: main smg FTD} below. As latter is stronger than the classical Skorokhod topology, our result has a different scope than its counterpart from \cite{JS}. 
Furthermore, in Section \ref{sec: annealed smg FTD} below we specify our results to the annealed case where all processes are defined on the same probability space and the limit is allowed to have characteristics which also depend on the underlying space. Finally, in Section \ref{sec: ito FTD} below we present an application to It\^o processes with fixed times of discontinuity.
The proofs of all these results fully rely on the power of weak-strong convergence. 

The article is structured as follows: In Section \ref{sec: MP revisited main section} we introduce the abstract martingale problem, discuss the concept of weak-strong convergence and present our abstract main results. In the following Section \ref{sec: Revisited Literature main section} we discuss relations to existing literature, i.e. to a limit theorem by Ethier and Kurtz \cite{EK}, the stability result for Volterra SDEs from \cite{jaber:hal-02279033}, and we present an extension of a theorem by Jacod and Shiryaev \cite{JS}. Finally, in Section~\ref{sec: stab semi ftd} we prove our new convergence results for processes with fixed times of discontinuity. 
\\

\noindent
We end the introduction with a short list of notation, which we use in the paper:
\begin{enumerate}
	\item[-] An inequality up to a multiplicative constant is denoted by \(\lesssim\).
	\item[-] The extended real line is denoted by \(\overline{\mathbb{R}} \triangleq \mathbb{R} \cup \{\pm \infty\}\). The set of non-negative definite real-valued \(d\times d\) matrices is denoted by \(\mathbb{S}^d_+\).
	\item[-] The Lebegue measure is denoted by \(\llambda\).
	\item[-] For a Polish space \(E\) we write \(C(E)\) for the space of continuous functions \(E \to \mathbb{R}\), \(B(E)\) for the space of bounded Borel functions \(E \to \mathbb{R}\) and \(C_b(E) \triangleq C(E) \cap B(E)\).
	\item[-] For \(p \geq 1\) and a Banach space \((E, \|\cdot\|)\) we denote by \(L^p_\textup{loc} (\mathbb{R}_+, E)\) the space of equivalence classes of locally \(p\)-integrable functions from \(\mathbb{R}_+\) into \(E\), i.e. of Borel functions \(f \colon \mathbb{R}_+ \to E\) such that \(\int_0^t \|f (s)\|^p ds < \infty\) for all \(t > 0\). We endow \(L^p_\textup{loc}(\mathbb{R}_+, E)\) with the local \(L^p\)-norm~topology.
	\item[-] We write \(C^2 (\mathbb{R}^d)\) for the space of twice continuously differentiable functions \(\mathbb{R}^d \to \mathbb{R}\), \(C_c^2(\mathbb{R}^d)\) for its subspace of functions with compact support, and \(C^2_b (\mathbb{R}^d)\) for the set of bounded functions \(f \in C^2(\mathbb{R}^d)\) with bounded gradient \(\nabla f\) and bounded Hessian \(\nabla^2 f\). 
	\item[-] For an operator \(\sigma\) we write \(\on{tr} (\sigma)\) for its trace and \(\sigma^*\) for its adjoint.
	\item[-] For a Polish space \(E\) we denote the space of continuous functions \(\mathbb{R}_+ \to E\) by \(\C(E)\) and the space of \cadlag functions \(\mathbb{R}_+ \to E\) by \(\D(E)\). 
	\item[-] On a function space \(F\) the identity is denoted by \(\X \colon F \to F\). In particular, if \(F = \C(E)\) or \(\D(E)\), then \(\X\) is the so-called \emph{coordinate process}. 
	\item[-] For a \cadlag process \(Z\) we write \(\Delta Z_t \triangleq Z_t - Z_{t-}\) for its time \(t\) jump. Moreover, we denote the quadratic variation process by \([\cdot, \cdot]\).
	\item[-] For an integer-valued random measure \(\p\) with compensator \(\q\) and a suitable measurable function \(H = H( \omega, t, y)\) we write 
	\[
	H * \p_t \triangleq \int_0^t \int H (s, y) \p (ds, dy), \quad t \in \mathbb{R}_+,
	\]
	and \(H * (\p - \q)\) for the integral process of \(H\) w.r.t. the compensated random measure \(\p - \q\), cf. \cite[Section II.1.d]{JS}. Furthermore, we denote by \(G_\textup{loc} (\p)\) the set of functions which are integrable w.r.t. \(\p - \q\), see \cite[Definition II.1.27]{JS}. For a semimartingale \(Z\) we denote the set of \(Z\)-integrable processes by \(L(Z)\), cf. \cite[Section~III.6]{JS}. For all unexplained terminology related to the \emph{general theory of stochastic processes} we refer to \cite[Chapter~I]{JS}.
\end{enumerate}

\section{The Martingale Problem Method Revisited} \label{sec: MP revisited main section}

\subsection{Abstract Martingale Problems} \label{sec: AMP}

The purpose of this section is to introduce an \emph{abstract martingale problem}. Let \((\Omega, \cF, \bF = (\cF_t)_{t \geq 0})\) be a filtered space which supports a set \(\Y\) of \(\overline{\mathbb{R}}\)-valued right-continuous adapted processes.

\begin{definition}\label{def:MP}
	We call a probability measure \(P\) on \((\Omega, \mathcal{F})\) a \emph{solution to the (local) martingale problem (MP) \(\Y\)}, if all processes in \(\Y\) are (local) \((\F, P)\)-martingales.
	The sets of solutions to the martingale problem and the local martingale problem are denoted by \(\cM (\Y)\) and \(\cM_\textup{loc} (\Y)\), respectively.
\end{definition}

We now collect a variety of important examples for martingale problems. 

\begin{example}[Martingale Problem of Stroock and Varadhan] \label{ex: SV}
	Let \(\Omega = \C (\mathbb{R}^d)\) 
	and let \(\X\) be the coordinate process on \(\Omega\).
	Furthermore, let \(b \colon \mathbb{R}_+ \times \mathbb{R}^d \to \mathbb{R}^d\) and \(\sigma \colon \mathbb{R}_+ \times \mathbb{R}^d \to \mathbb{R}^{d \times r}\) be locally bounded Borel functions. 
	To obtain the  martingale problem of Stroock and Varadhan~\cite{SV} define \(\Y\) to be the set of the following processes:
	\[
	f(\X) - f(\X_0) - \int_0^\cdot \big( \langle b(s, \X_s), \nabla f(\X_s)\rangle + \tfrac{1}{2} \operatorname{tr} (\sigma \sigma^* (s, \X_s) \nabla^2 f(\X_s)) \big) ds
	\]
	where \(f \in C^2_c (\mathbb{R}^d)\).
	It is classical (see, e.g. \cite[Section 5.4]{KaraShre}) that the set \(\cM(\Y)\) coincides with the set of solution measures (i.e. laws of solution processes) for the SDE
	\[
	d X_t = b(t, X_t) dt + \sigma(t, X_t) d W_t, 
	\]
	where \(W\) is an \(r\)-dimensional standard Brownian motion.
\end{example}

\begin{example}[Martingale Problem of Ethier and Kurtz] \label{ex: EK}
	Let \(E\) be a Polish space and take \(\Omega = \D (E)\) or \(\C (E)\). 
	Again, let \(\X\) be the coordinate process. Fix a set \(A \subset C_b(E) \times B(E)\). To obtain the martingale problem of Ethier and Kurtz \cite{EK} define \(\Y\) to be the set of the following processes:
	\[
	f(\X) - f(\X_0) - \int_0^\cdot g (\X_s) ds, \quad (f, g) \in A.
	\]
	The setting from Example \ref{ex: SV} is a special case of this framework.
\end{example}

\begin{example}[Semimartingale Problems] \label{ex: SMP}
	In the following we discuss two ways to characterize the laws of semimartingales via martingale problems. The first is given by \cite[Theorem III.2.7]{JS}: 
	Set \(\Omega = \D (\mathbb{R}^d)\) and let \((B, C, \nu)\) be a candidate triplet for semimartingale characteristics corresponding to a fixed truncation function \(h \colon \mathbb{R}^d \to \mathbb{R}^d\), see \cite[Definition II.2.6]{JS} for a precise definition including the technical requirements.
	Let \(\X\) be the coordinate process and define
	\begin{align*}
	\X (h) &\triangleq \X - \sum_{s \leq \cdot} \big(\Delta \X_s - h(\Delta \X_s)\big),\\
	M(h) &\triangleq \X(h) - \X_0 - B,\\
	\widetilde{C}^{ij} &\triangleq C^{ij} + \int h^i(x) h^j(x) \nu([0, \cdot] \times dx) - \sum_{s \leq \cdot} \Delta B^i_s \Delta B^j_s.
	\end{align*}
	Further, let \(\mathscr{C}^+(\mathbb{R}^d)\) be a family of bounded real-valued Borel functions on \(\mathbb{R}^d\) vanishing around the origin, which is measure determining for the class of Borel measures \(\eta\) on \(\mathbb{R}^d\) with the properties \(\eta(\{0\}) = 0\) and \(\eta(\{x \in \mathbb{R}^d \colon \|x\| > \varepsilon\}) < \infty\) for all \(\varepsilon > 0\), cf. \cite[II.2.20]{JS} for more details.
	Let \(\Y\) consist of the following processes:
	\begin{enumerate}
		\item[(i)] \(M (h)^{(i)}, i = 1, \dots, d\).
		\item[(ii)] \(M (h)^{(i)} M(h)^{(j)} - \widetilde{C}^{(ij)}, i, j = 1, \dots, d.\)
		\item[(iii)] \(\sum_{s \leq \cdot} g(\Delta \X_s) - \int g(x) \nu([0, \cdot] \times dx), g \in \mathscr{C}^+(\mathbb{R}^d)\).
	\end{enumerate}Then, 
	\(\cM_\textup{loc}(\Y)\) is the set of laws of semimartingales with characteristics \((B, C, \nu)\). 
	
	Next, we discuss an alternative characterization which can be seen as a reformulation of \cite[Theorem II.2.42]{JS}.
	It is well-known (\cite[Proposition II.2.9]{JS}) that \((B, C, \nu)\) can be decomposed as follows:
	\[
	d B_t = b_t d A_t, \quad d C_t = c_t d A_t, \quad \nu(dt, dx) = F_t (dx) d A_t,
	\]
	where \(A\) is an increasing right-continuous predictable process, and \(b, c\) and \(F\) are the predictable densities of \((B, C, \nu)\) w.r.t. the induced measure \(d A_t\).
	For \(f \in C^2_b(\mathbb{R}^d)\) we set 
	\begin{align*}
	\mathcal{L} f (s) \triangleq \langle b_s, &\nabla f (\X_{s-}) \rangle + \tfrac{1}{2} \operatorname{tr} ( c_s \nabla^2 f (\X_{s-}) ) 
	\\&+ \int \big(f(\X_{s-} + x) - f(\X_{s-}) - \langle \nabla f(\X_{s-}), h(x) \rangle \big) F_s (dx).
	\end{align*}
	Let \(\Y^*\) be the set of the following processes:
	\begin{align*}
	f(\X) - f(\X_0) - \int_0^\cdot \mathcal{L} f (s) d A_s, \quad f \in C^2_b(\mathbb{R}^d).
	\end{align*}
	Then, \(\cM_\textup{loc}(\Y^*) = \cM_\textup{loc}(\Y)\).
	
	Finally, let us relate the local MP \((\Y)\) to the class of diffusions and the martingale problem of Stroock and Varadhan as explained in Example \ref{ex: SV}. For Brownian motion, more generally for diffusions, it is well-known that it suffices to consider linear and quadratic test functions, i.e. \(f(x) = x^{(i)}\) and \(f (x) = x^{(i)} x^{(j)}\) for \(i, j = 1, \dots, d\), provided one asks in addition for continuous paths, cf. \cite[Proposition 5.4.6]{KaraShre}. For Brownian motion this observation is precisely L\'evy's characterization. Namely, using \(f(x) = x^{(i)}\) yields that \(\X\) is a continuous\footnote{Here, the additional requirement of continuous paths has to be taken into consideration.} local martingale, and using in addition \(f(x) = x^{(i)}x^{(j)}\) implies that \([\X, \X] = \operatorname{Id}.\) The set \(\Y\) generalizes this idea to general semimartingales. Thereby, the processes in (iii) take care of the jump structure. To see this, assume that \(\nu = 0\), which means that (iii) consists of the processes \(\sum_{s \leq \cdot} g(\Delta \X_s)\) with \(g \in \mathscr{C}^+(\mathbb{R}^d)\). It is clear that this class consists of local martingales if and only if \(\X\) is a.s. continuous. This observation relates the processes in (iii) above to the requirement of continuous paths in L\'evy's characterization.
\end{example}

\begin{remark}
	It may happen that a probability measure solves the martingale problems from Examples \ref{ex: EK} and \ref{ex: SMP} but not both uniquely. For instance, suppose that \(X\) is a Brownian motion sticky at the origin, i.e. \(X\) solves the system
	\[
	d X_t = \1_{\{X_t \not = 0\}} d W_t,\qquad \1_{\{X_t = 0\}} dt = \tfrac{1}{\mu} d L^0_t (X), \qquad \mu > 0,
	\]
	where \(W\) is a standard Brownian motion and \(L^0 (X)\) denotes the semimartingale (right) local time of \(X\) in the origin. This characterization of a sticky Brownian motion is taken from \cite{doi:10.1080/17442508.2014.899600}. 
	It is obvious that \(X\) is a continuous local martingale (and hence a semimartingale) with quadratic variation
	\[
	[X, X] = \int_0^\cdot \1_{\{X_s \not = 0\}} ds.
	\]
	Thus, independent of the parameter \(\mu\), the law of \(X\) solves the (semi)martingale problem from Example \ref{ex: SMP} with \((0, C, 0)\) where
	\[
	C (\omega) = \int_0^\cdot \1_{\{\omega (s) \not = 0\}} ds, \quad \omega \in \D(\mathbb{R}).
	\]
	In fact, even the Wiener measure solves this martingale problem.
	We conclude that the law of \(X\) cannot be captured in a unique manner by the semimartingale problem but, as \(X\) is a one-dimensional diffusion in the sense of It\^o and McKean \cite{ItoMcKean}, its law is a unique solution to the martingale problem of Example \ref{ex: EK} when \(A \subset C_b(\mathbb{R}) \times C_b(\mathbb{R})\) is chosen appropriately,\footnote{see Section 2.7 in \cite{Freedman} for details on how \(A \subset C_b(\mathbb{R}) \times C_b(\mathbb{R})\) can be taken to capture diffusions in the sense of It\^o and McKean} see the discussion on p. 994 in \cite{doi:10.1080/17442508.2014.899600} and \cite[Remark 5.3]{10.1214/EJP.v19-2350} for more details.
\end{remark}

\begin{example}[Martingale Characterization for SPDEs] \label{ex: SPDE}
	We now describe a martingale problem for the semigroup approach to semilinear stochastic partial differential equations (SPDEs). The standard reference for this framework is the monograph of Da Prato and Zabczyk~\cite{DePrato}.
	
	Let \(E = (E, \langle \cdot, \cdot \rangle_E)\) be a separable real Hilbert space and set \(\Omega = \C (E)\).
	Take another separable real Hilbert space \((H, \langle \cdot, \cdot \rangle_H)\) and denote by \(L (H, E)\) the space of linear bounded operators \(H \to E\).
	Moreover, let \(\mu \colon \mathbb{R}_+ \times \Omega \to E\) and \(\sigma \colon \mathbb{R}_+ \times \Omega \to L (H, E)\) be progressively measurable processes. To be precise, we mean that \(\sigma h \colon \mathbb{R}_+ \times \Omega \to E\) is progressively measurable for every \(h \in H\). 
	Finally, let \(A \colon D(A) \to E\) be the generator of a \(C_0\)-semigroup on \(E\) and let \(A^* \colon D(A^*) \to E\) be its adjoint. 
	Define \(\Sigma\) to be the set of all functions \(g (\langle \cdot, y^*\rangle_E)\) where \(y^* \in D(A^*)\) and \(g \in C^2(\mathbb{R})\). For \(f = g(\langle \cdot, y^*\rangle_E) \in \Sigma\) we set
	\begin{align*}
	(\mathcal{L} f)_s \triangleq g' (\langle \X_s, y^*\rangle_E ) &(\langle \X_s, A^* y^* \rangle_E + \langle \mu_s (\X), y^*\rangle_E) \\&+ \tfrac{1}{2} g'' (\langle \X_s, y^*\rangle_E) \langle \sigma^*_s (\X) y^*,  \sigma^*_s (\X) y^*\rangle_H.
	\end{align*}
	Let \(\Y\) be the set of the following processes:
	\[
	f(\X) - f(\X_0) - \int_0^\cdot (\mathcal{L}f)_s ds, \quad f \in \Sigma.
	\]
	Then, under suitable assumptions on the coefficients \(A, b\) and \(\sigma\), the set \(\cM_\textup{loc}(\Y)\) coincides with the set of laws of mild solutions to the SPDE 
	\[
	d X_t = (A X_t + \mu_t (X) ) dt + \sigma_t (X) d W_t, 
	\]
	where \(W\) is a standard cylindrical Brownian motion. We refer to \cite[Proposition 2.6, Lemma~3.6]{criensritter20} for more details.
\end{example}

\begin{example}[Local Martingale Problem for SDEs of Volterra type] \label{ex: volterra}
	Let \((X, Z)\) be a measurable process with paths in \(L^p_\textup{loc} (\mathbb{R}_+, \mathbb{R}^d) \times \D(\mathbb{R}^k)\) such that, on its underlying filtered probability space, \(X\) is predictable, \(Z\) is a semimartingale with characteristics 
	\[
	B^Z = \int_0^\cdot b(X_s) ds, \quad C^Z = \int_0^\cdot c (X_s)ds, \quad \nu^{Z} (dx, dt) = \nu (X_t, dx) dt, \]
	corresponding to a fixed truncation function \(h \colon \mathbb{R}^k \to \mathbb{R}^k\), and
	\[
	X_t = g_0 (t) + \int_0^t K_{t-s} d Z_s, \quad t \in \mathbb{R}_+,
	\]
	where \(K\) is a convolution kernel \(\mathbb{R}_+ \to \mathbb{R}^{d \times k}\).
	We call such a process \((X, Z)\) a solution to a \emph{Volterra SDE (VSDE)}.
	Recently, it was proven in \cite{jaber:hal-02279033} that solutions to VSDEs have a martingale characterization.
	For \(f \in C^2_b (\mathbb{R}^k)\) and \((x, z) \in \mathbb{R}^d \times \mathbb{R}^k\) we set 
	\begin{align*}
	\mathcal{L} f (x, z) \triangleq \langle b(x)&, \nabla f (z)\rangle + \tfrac{1}{2} \operatorname{tr} ( a(x) \nabla^2 f (z)) \\&+ \int \big( f (z + y) - f(z) - \langle h (y), \nabla f (z) \rangle \big) \nu(x, dy).
	\end{align*}
	Then, \((X, Z)\) is a solution to the VSDE described above if and only if the processes 
	\[
	f (Z) - \int_0^\cdot \mathcal{L} f (X_s, Z_s) ds, \quad f \in C^2_b (\mathbb{R}^k), 
	\]
	are local martingales and 
	\[
	\int_0^t X_s ds = \int_0^t g_0(s) ds + \int_0^t K_{t-s} Z_s ds, \quad t \in \mathbb{R}_+.
	\]
	In Section \ref{sec: rec VSDE} below we take a closer look at VSDEs.
\end{example}

\subsection{Weak-Strong Convergence} \label{sec: WS conv}
In this section we recall the notion of \emph{weak-strong convergence} of probability measures on a product space, which was studied in \cite{SPS_1981__15__529_0}, see also~\cite{doi:10.1080/17442508108833169,JM81}.

Let \((U, \mathcal{U})\) be a measurable space and let \((F, \mathcal{B}(F))\) be a Polish space with its Borel \(\sigma\)-field. We define the product space \(S \triangleq U \times F\) and the corresponding product \(\sigma\)-field \(\mathcal{S} \triangleq \mathcal{U} \otimes \mathcal{B}(F)\).
Let \(C_S\) be the set of bounded \(\mathcal{S}/\mathcal{B}(\mathbb{R})\) measurable functions \(f \colon S \to \mathbb{R}\) such that \(\omega \mapsto f (\alpha, \omega)\) is continuous (as a function on \(F\)) for every \(\alpha \in U\). 
\begin{definition} \label{def: ws conv}
	Let \(P, P^1, P^2, \dots\) be probability measures on \((S, \mathcal{S})\). We say that the sequence \((P^n)_{n \in \mathbb{N}}\) \emph{converges in the weak-strong sense} to \(P\), written \(P^n \to_{ws} P\), if 
	\[
	E^{P^n} [ f ] \to E^P [ f ] \text{ as \(n \to \infty\) for all } f \in C_S.
	\]
\end{definition}
\begin{remark} \label{rem: ws conv, in prob}
	\begin{enumerate}
		\item[\textup{(i)}]
		Let \(P, P^1, P^2, \dots\) be probability measures on \(F\), let \(U\) be a singleton and extend \(P,P^1, P^2, \dots\) to the product space \(S = U \times F\) in the obvious manner. Then, it is clear that \(P^n \to_{ws} P\) if and only if \(P^n \to P\) weakly in the usual sense. This simple observation explains that weak-strong convergence is a natural extension of the usual weak convergence with an additional \emph{control variable}. In particular, any strategy to identify weak-strong limits is also a strategy to identify weak limits in the usual sense.
		\item[\textup{(ii)}]
		Weak-strong convergence has a close relation to the notion of \emph{stable convergence}, which is more commonly known in probability literature, see \cite[Section VIII.5.c]{JS}. To be more precise, if \(E\) is a Polish space and \((\Omega', \mathcal{F}', P')\) is a probability space which supports \(E\)-valued random variables \(Z^1, Z^2, \dots\), then \((Z^n)_{n \in \mathbb{N}}\) converges \emph{stably} (in the sense of \cite[Definition VIII.5.28]{JS}) if and only if the sequence
		\[
		P_n (d \omega, dz) \triangleq \delta_{Z^n(\omega)} (dz) P'(d \omega), \quad n \in \mathbb{N},
		\]
		converges in the weak-strong sense as a probability measure on \((\Omega' \times E, \mathcal{F}' \otimes \mathcal{B}(E))\), see also \cite[Proposition 2.4]{SPS_1981__15__529_0}. In certain cases stable (and therefore also weak-strong) convergence is equivalent to convergence in probability. More precisely, the sequence \((P_n)_{n \in \mathbb{N}}\) as above converges in the weak-strong sense to 
		\(
		P (d \omega, d z) = \delta_{Z (\omega)} (dz) P'(d \omega)
		\)
		if and only if \(Z^n \to Z\) in probability, see \cite[Proposition 3.5]{SPS_1981__15__529_0}. 
		\item[\textup{(iii)}]
		Yet another point of view on weak-strong convergence is the following: Let \(P_1, P_2, \dots\) be probability measures on \((S, \mathcal{S})\) with the same \(U\)-marginal \(\mu\). It is well-known that there exist transition kernel \(K_1, K_2, \dots\) such that
		\[
		P_n (d u, d f ) = K_n (u, df) \mu (du), \quad n \in \mathbb{N}.
		\]
		Then, \((P_n)_{n \in \mathbb{N}}\) converges in the weak-strong sense if and only if for every \(f \in C_b (F)\) the sequence \(K_1 f, K_2 f, \dots\) converges weakly (in the Banach space sense) in \(L^1 (U, \mathcal{U}, \mu)\).
		
	\end{enumerate}
\end{remark}

Let \(M_{mc}(S)\) be the space of all probability measures on \((S, \mathcal{S})\) endowed with the weakest topology such that the map \(P \mapsto E^P [f]\) is continuous for every \(f \in C_S\). 
Of course, \(P^n \to P\) in \(M_{mc} (S)\) if and only if \(P^n \to_{ws} P\). 
Let \(M_m (U)\) be the space of probability measures on \((U, \mathcal{U})\) endowed with the weakest topology such that the map \(P \mapsto E^P [f]\) is continuous for every bounded \(\mathcal{U}/\mathcal{B}(\mathbb{R})\) measurable function \(f \colon U \to \mathbb{R}\). 

\begin{remark} \label{rem: topology M_m(U)}
	By \cite[Proposition 2.4]{SPS_1981__15__529_0}, the above topology on \(M_m(U)\) is also the weakest topology such that the map \(P \mapsto P(A)\) is continuous for every \(A \in \mathcal{U}\).
\end{remark}

Furthermore, let \(M_c (F)\) be the space of probability measures on \((F, \mathcal{B}(F))\) endowed with the usual weak topology, i.e. the weakest topology such that the map \(P \mapsto E^P [f]\) is continuous for every bounded continuous function \(f \colon F \to \mathbb{R}\).
For \(P \in M_{mc}(S)\) we write \(P_U\) for its \(U\)-marginal and \(P_F\) for its \(F\)-marginal, respectively. To be more precise, we have the following:
\[
P_U (du) \triangleq P(du \times F), \qquad P_F (d f) \triangleq P(U \times d f).
\]
To understand the concept of weak-strong convergence better, we recall the following:

\begin{theorem}[Corollary 2.9 in \cite{SPS_1981__15__529_0}] \label{theo: ws equivalent chara}
	Suppose that \(U\) is Polish and that \(\mathcal{U}\) is its Borel \(\sigma\)-field.
	Then, \(P^n \to_{ws} P\) if and only if \(\{P^n_U\colon n \in \mathbb{N}\}\) is relatively compact in \(M_m (U)\) and \(P^n \to P\) in~\(M_c (S)\).
\end{theorem}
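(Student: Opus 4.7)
For the \emph{only if} direction the argument is immediate from the definition of $C_S$. If $P^n\to_{ws} P$ then, since $U$ and $F$ are Polish, the product Borel $\sigma$-field is $\mathcal{U}\otimes\mathcal{B}(F)=\mathcal{S}$, so every bounded continuous $f\colon S\to\mathbb{R}$ is $\mathcal{S}$-measurable and fibrewise continuous, hence $f\in C_S$; this yields $P^n\to P$ in $M_c(S)$. Moreover, for any bounded $\mathcal{U}/\mathcal{B}(\mathbb{R})$-measurable $g\colon U\to\mathbb{R}$ the lift $(\alpha,\omega)\mapsto g(\alpha)$ is trivially in $C_S$, so $\int g\,dP^n_U=\int g\,dP^n\to\int g\,dP=\int g\,dP_U$, and thus $P^n_U\to P_U$ in $M_m(U)$, giving in particular relative compactness.

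For the \emph{if} direction I first promote relative compactness of $\{P^n_U\}$ to convergence $P^n_U\to P_U$ in $M_m(U)$: any subsequence has a further $M_m$-convergent subsequence with limit $Q$; since the projection $S\to U$ is continuous, $P^n\to P$ in $M_c(S)$ implies $P^n_U\to P_U$ in $M_c(U)$, so $Q=P_U$ on bounded continuous test functions, and this identifies $Q=P_U$ on the Polish space $U$. Hence the full sequence $P^n_U$ converges to $P_U$ in $M_m(U)$.

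The substantial step is then to show $\int f\,dP^n\to\int f\,dP$ for arbitrary bounded $f\in C_S$. My approach is a slicing/approximation argument combining the two convergences. By relative compactness in $M_m(U)$ (which implies tightness in $M_c(U)$), for every $\varepsilon>0$ one can fix a compact $K\subset U$ with $P^n_U(K^c)\vee P_U(K^c)<\varepsilon$ uniformly in $n$. Using Polishness of $U$, cover $K$ by finitely many Borel sets $A_1,\dots,A_N$ of small diameter with representatives $\alpha_i\in A_i$, and form the simple approximation
\[
f_\varepsilon(\alpha,\omega)=\sum_{i=1}^N \mathbf{1}_{A_i}(\alpha)\,f(\alpha_i,\omega).
\]
For each $i$ the function $(\alpha,\omega)\mapsto \mathbf{1}_{A_i}(\alpha)\cdot f(\alpha_i,\omega)$ can be treated by splitting: the factor $\mathbf{1}_{A_i}$ is handled through $P^n_U\to P_U$ in $M_m(U)$, and the factor $f(\alpha_i,\cdot)\in C_b(F)$ through $P^n\to P$ in $M_c(S)$ (indeed both $(\alpha,\omega)\mapsto f(\alpha_i,\omega)$ are bounded continuous on $S$). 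A standard disintegration $P^n(d\alpha,d\omega)=K^n(\alpha,d\omega)P^n_U(d\alpha)$ then lets me control $\int (f-f_\varepsilon)\,dP^n$ by $\mathrm{osc}$-terms of $f$ on each slice $A_i$ plus boundary contributions from $K^c$.

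\textbf{Main obstacle.} The delicate point is exactly the joint control above: $f$ is only fibrewise continuous, so the oscillation of $f(\cdot,\omega)$ over $A_i$ need not be small even for arbitrarily fine partitions of $U$, and hence a naive uniform approximation of $f$ by $f_\varepsilon$ fails. The remedy I expect to use is a Lusin-type regularity argument exploiting that $M_m$-relative compactness is considerably stronger than tightness: it implies that the family of $U$-marginals admits, on $K$, a common finite measurable partition on which the map $\alpha\mapsto f(\alpha,\cdot)\in C_b(F)$ is uniformly approximable, and the residual $U$-sets can be absorbed by the smallness of the $P^n_U$-mass they carry, uniformly in $n$. Combining this with a standard monotone-class/algebra argument on test functions of the form $\sum_i g_i(\alpha)h_i(\omega)$ with $g_i$ bounded measurable and $h_i\in C_b(F)$ closes the argument.
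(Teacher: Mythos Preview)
The paper does not give its own proof of this statement; it is quoted verbatim as Corollary~2.9 of Jacod--M\'emin and is used as a black box. So there is no ``paper proof'' to compare against, and I can only assess your argument on its merits.

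Your \emph{only if} direction is correct and complete. For the \emph{if} direction, the reduction from relative compactness of $\{P^n_U\}$ to convergence $P^n_U\to P_U$ in $M_m(U)$ is fine (metrizability of $M_m(U)$ follows from Theorem~\ref{theo: Mmc metrizible}, since $U$ Polish makes $\mathcal{U}$ countably generated). But the ``substantial step'' has a real gap that you yourself flag and do not close. The Lusin-type argument you sketch would require the map $\alpha\mapsto f(\alpha,\cdot)$ to land in a separable function space and to be Borel measurable there, neither of which is automatic; and even granting that, Lusin gives you a good set for \emph{one} measure, whereas you need a set that is simultaneously large for every $P^n_U$. Relative compactness in $M_m(U)$ does not obviously provide such a uniform Lusin set, and your proposal does not explain how to extract one. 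As written, the argument stops at exactly the point where the difficulty lies.

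There is a much shorter route that avoids all of this and is almost certainly what Jacod--M\'emin intend, given the result is stated as a \emph{corollary}: combine Theorems~\ref{theo: ws conv rela comp} and~\ref{theo: Mmc metrizible}, both quoted in the paper. From $P^n\to P$ in $M_c(S)$ you get $P^n_F\to P_F$ in $M_c(F)$, hence $\{P^n_F\}$ is relatively compact in $M_c(F)$; together with the assumed relative compactness of $\{P^n_U\}$ in $M_m(U)$, Theorem~\ref{theo: ws conv rela comp} yields that $\{P^n\}$ is relatively compact in $M_{mc}(S)$. By Theorem~\ref{theo: Mmc metrizible} this space is metrizable, so every subsequence has a further subsequence converging in $M_{mc}(S)$ to some $Q$. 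Since $C_b(S)\subset C_S$, this forces $Q=P$, and hence the whole sequence converges to $P$ in $M_{mc}(S)$. This bypasses any approximation of $f\in C_S$ entirely.
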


In other words, weak-strong and weak convergence on \(S\) distinguish by relative compactness of the first marginales in \(M_m(U)\).

The following result shows that relative compactness in \(M_{mc}(S)\) is equivalent to relative compactness of the marginales.
\begin{theorem} [Theorem 2.8 in \cite{SPS_1981__15__529_0}] \label{theo: ws conv rela comp} 
	A set \(I \subset M_{mc}(S)\) is relatively compact if and only if the sets \(\{P_U \colon P \in I\}\) and \(\{P_F \colon P \in I\}\) are relatively compact in \(M_m(U)\) and \(M_{c}(F)\), respectively.
\end{theorem}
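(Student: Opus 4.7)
The forward implication is essentially by continuity of marginals. For any bounded \(\mathcal{U}\)-measurable \(g \colon U \to \mathbb{R}\), the function \((u, \omega) \mapsto g(u)\) belongs to \(C_S\) (being jointly measurable and trivially continuous in \(\omega\)), so the map \(P \mapsto P_U\) from \(M_{mc}(S)\) to \(M_m(U)\) is continuous; analogously, for each \(\phi \in C_b(F)\) the function \((u, \omega) \mapsto \phi(\omega)\) is in \(C_S\), yielding continuity of \(P \mapsto P_F\) into \(M_c(F)\). Continuous images of relatively compact sets are relatively compact, which settles the ``only if'' direction.

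For the converse, I would use a Tychonoff compactification. Embed \(\iota \colon M_{mc}(S) \hookrightarrow X \triangleq \prod_{f \in C_S}[-\|f\|_\infty, \|f\|_\infty]\) via \(\iota(P)(f) \triangleq E^P[f]\); under this embedding the topology of \(M_{mc}(S)\) is precisely the subspace topology from the compact cube \(X\), so it suffices to show that every cluster point \(L\) of \(\iota(I)\) in \(X\) lies in \(\iota(M_{mc}(S))\). Such an \(L\) is a positive linear functional on \(C_S\) with \(L(1)=1\) and \(|L(f)| \leq \|f\|_\infty\), say \(L = \lim_\alpha \iota(P^\alpha)\) along a convergent subnet. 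Passing to a further subnet (which preserves \(X\)-convergence) and invoking the assumed marginal compactness, I may assume \(P^\alpha_U \to \mu\) in \(M_m(U)\) and \(P^\alpha_F \to \nu\) in \(M_c(F)\), so that \(L(g \otimes 1) = \int g\,d\mu\) for bounded measurable \(g\) and \(L(1 \otimes \phi) = \int \phi\,d\nu\) for \(\phi \in C_b(F)\).

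The main technical step is to represent \(L\) as integration against a probability measure on \((S, \mathcal{S})\). For each \(\phi \in C_b(F)\), positivity of \(L\) yields \(|L(\mathbf{1}_A \otimes \phi)| \leq \|\phi\|_\infty\,\mu(A)\), so \(A \mapsto L(\mathbf{1}_A \otimes \phi)\) is a signed measure absolutely continuous w.r.t.\ \(\mu\) with Radon--Nikodym density \(h_\phi \in L^\infty(U, \mu)\). Fix a countable \(\mathbb{Q}\)-algebra \(\mathcal{A} \subset C_b(F)\) containing \(1\), separating points of \(F\), and generating \(\mathcal{B}(F)\) (such \(\mathcal{A}\) exists since \(F\) is Polish). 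Off a common \(\mu\)-null set, \(\phi \mapsto h_\phi(u)\) is a positive, linear, contractive functional on \(\mathcal{A}\) with value \(1\) at \(\phi = 1\). The Daniell--Stone monotonicity condition is supplied by tightness of \(\nu\) (Prokhorov on Polish \(F\)) together with Dini: for \(\phi_n \downarrow 0\) in \(\mathcal{A}\), one has \(\int h_{\phi_n}\,d\mu = \nu(\phi_n) \to 0\), and monotonicity of \((h_{\phi_n})\) forces \(h_{\phi_n}(u) \downarrow 0\) for \(\mu\)-a.e.\ \(u\). Daniell--Stone then produces a Borel probability transition kernel \(K(u, d\omega)\) with \(h_\phi(u) = \int \phi\,dK(u, \cdot)\) for \(\phi \in \mathcal{A}\), which extends to \(\phi \in C_b(F)\) by Stone--Weierstrass on compact subsets of \(F\) combined with tightness. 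Setting \(P(du, d\omega) \triangleq K(u, d\omega)\,\mu(du)\) defines a probability measure on \((S, \mathcal{S})\) satisfying \(L(g \otimes \phi) = E^P[g \otimes \phi]\) for all \(g \in B(U)\) and \(\phi \in C_b(F)\).

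The final and most delicate step, which I expect to be the main obstacle, is upgrading the identity \(L(f) = E^P[f]\) from tensors to all \(f \in C_S\), since \(C_S\) is not sup-norm approximable by finite tensor sums when \(U\) is large. My plan is a localization via tightness: given \(\varepsilon > 0\), pick a compact \(K_\varepsilon \subset F\) with \(\nu(K_\varepsilon^c) \vee \sup_\alpha P^\alpha_F(K_\varepsilon^c) < \varepsilon\), choose \(\psi_\varepsilon \in C_b(F)\) with \(\mathbf{1}_{K_\varepsilon} \leq \psi_\varepsilon \leq \mathbf{1}_{K_{2\varepsilon}}\) for a slightly enlarged compact, and decompose \(f = f\psi_\varepsilon + f(1-\psi_\varepsilon)\). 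Both \(L\) and \(E^P\) evaluate \(f(1-\psi_\varepsilon)\) within \(2\|f\|_\infty\,\varepsilon\). The ``inner'' piece \(f\psi_\varepsilon\) can be approximated uniformly on \(U \times K_{2\varepsilon}\) by a finite tensor sum \(\sum_k g_k \otimes \chi_k\) built from a continuous partition of unity on \(F\) subordinate to a fine cover of \(K_{2\varepsilon}\), with \(g_k(u) \triangleq f(u, x_k)\psi_\varepsilon(x_k)\); uniformity in \(u\) is handled by a measurable partition of \(U\) according to the local continuity modulus of \(f(u, \cdot)\) on \(K_{2\varepsilon}\). Inserting these approximations into \(L(f) - E^P[f]\), refining the partition, and letting \(\varepsilon \to 0\) produces the desired equality and identifies \(L = \iota(P)\), completing the proof.
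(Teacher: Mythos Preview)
The paper does not contain a proof of this theorem: it is quoted verbatim as ``Theorem 2.8 in \cite{SPS_1981__15__529_0}'' (Jacod--M\'emin) and used as a black box. There is therefore no in-paper argument to compare your proposal against.

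As to the merits of your sketch: the forward direction is clean and correct. For the converse, the Tychonoff embedding and the extraction of marginal limits are fine, and the Radon--Nikodym step producing the densities \(h_\phi\) is sound. Two places deserve more care. First, in the Daniell--Stone step you obtain \(h_{\phi_n}(u)\downarrow 0\) for \(\mu\)-a.e.\ \(u\) only for each \emph{fixed} decreasing sequence \((\phi_n)\subset\mathcal A\); since \(\mathcal A\) has uncountably many such sequences, you have not yet produced a single \(\mu\)-null set off of which the Daniell condition holds for the functional \(\phi\mapsto h_\phi(u)\). A standard repair is to first fix countably many Urysohn functions \(\psi_m\in\mathcal A\) with \(\mathbf 1_{K_m}\le\psi_m\le 1\) for an increasing sequence of compacts with \(\nu(K_m^c)\to 0\), force \(h_{1-\psi_m}(u)\to 0\) along a subsequence for a.e.\ \(u\), and then use Dini on each \(K_m\) to control arbitrary \(\phi_n\downarrow 0\) uniformly in the sequence. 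Second, in the final approximation you do not actually need a ``measurable partition of \(U\) by continuity modulus'': with a partition of unity \(\{\chi_j\}\) of mesh \(\delta\) on \(K_{2\varepsilon}\) and centers \(x_j\), the error \(f\psi_\varepsilon-\sum_j f(\cdot,x_j)\chi_j\) is bounded in absolute value by the function \(u\mapsto \omega_{f(u,\cdot)}(\delta;K_{2\varepsilon})\), which is bounded, \(\mathcal U\)-measurable (the sup runs over a countable dense set of the compact), and lies in \(C_S\) as a function constant in \(\omega\). Since \(L\) and \(E^P\) agree with \(\mu\)-integration on such functions, dominated convergence in \(\delta\) closes the argument without any partition of \(U\). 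With these two adjustments your outline becomes a complete proof.
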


In non-metrizible spaces, compactness does not imply sequential compactness. Thus, as one would like to work with sequences, it is interesting to have a condition for the metrizibility of \(M_{mc}(S)\).

\begin{theorem}[Proposition 2.10 in \cite{SPS_1981__15__529_0}] \label{theo: Mmc metrizible}
	If \(\mathcal{U}\) is separable, then \(M_{mc}(S)\) and \(M_m (U)\) are metrizible.
\end{theorem}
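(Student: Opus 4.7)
The plan is to construct an explicit compatible metric on $M_{mc}(S)$, from which the claim for $M_m(U)$ follows by specializing $F$ to a singleton (in which case $C_S$ reduces to all bounded $\mathcal{U}$-measurable functions, so $M_{mc}(S)$ collapses to $M_m(U)$; cf.\ Remark \ref{rem: ws conv, in prob}(i)). Since $\mathcal{U}$ is separable, I first fix a countable algebra $\mathcal{A}_0 = \{A_n : n \in \mathbb{N}\}$ with $\sigma(\mathcal{A}_0) = \mathcal{U}$. Since $F$ is Polish, I fix a countable convergence-determining family $\{g_m : m \in \mathbb{N}\} \subset C_b(F)$, for instance the bounded Lipschitz functions with rational parameters built from a countable dense subset of $F$.

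As candidate metric I propose
\[
\rho(P, Q) \triangleq \sum_{n, m \geq 1} 2^{-(n+m)} \bigl| E^P[\1_{A_n}(u) g_m(f)] - E^Q[\1_{A_n}(u) g_m(f)] \bigr|, \qquad P, Q \in M_{mc}(S).
\]
Each integrand $(u, f) \mapsto \1_{A_n}(u) g_m(f)$ belongs to $C_S$, so every summand is continuous on $M_{mc}(S) \times M_{mc}(S)$ and the series converges uniformly; hence $\rho$ itself is continuous there. A monotone class / Dynkin argument applied coordinatewise to $\rho(P, Q) = 0$ forces $E^P[\1_A(u) g(f)] = E^Q[\1_A(u) g(f)]$ first for every $A \in \mathcal{U}$ and $g \in \{g_m\}$, then by convergence-determination in $f$ for every $g \in C_b(F)$, and therefore $P = Q$ on $\mathcal{U} \otimes \mathcal{B}(F)$. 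So $\rho$ is a metric.

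It remains to verify that the $\rho$-topology coincides with the topology on $M_{mc}(S)$. One inclusion is immediate from the continuity of $\rho$. The converse requires that $P \mapsto E^P[h]$ is $\rho$-continuous for every $h \in C_S$. The strategy is to approximate $h$ by finite linear combinations of tensor functions $\1_{A_{n_i}}(u) g_{m_j}(f)$: in the $f$-variable one uses that $\{g_m\}$ is weak-topology-determining together with tightness of the $F$-marginal (Ulam, since $F$ is Polish); in the $u$-variable, bounded $\mathcal{U}$-measurable functions are approximated by $\mathcal{A}_0$-step functions, with bounded convergence transferring pointwise estimates to integrals.

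The main obstacle is to make this approximation quantitatively uniform over all $Q$ in a $\rho$-neighbourhood of $P$, since the approximation of $h$ depends a priori on the measure. This is where the countable additivity of the measures, the $\pi$-$\lambda$ structure of $\mathcal{A}_0$, and the Polish structure of $F$ come together: the $\pi$-$\lambda$ structure allows the measure-theoretic approximation on the $u$-side to be transferred to $\rho$-nearby measures, while tightness controls the $f$-side uniformly on such a neighbourhood. Once this uniformity is secured, every $h \in C_S$ is approximable by $\rho$-continuous tensor combinations, so $P \mapsto E^P[h]$ inherits $\rho$-continuity and the two topologies agree.
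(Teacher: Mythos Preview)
The paper does not prove this statement; it is cited from Jacod--M\'emin without argument, so there is no in-paper proof to compare against. That said, your candidate metric $\rho$ does not metrize $M_{mc}(S)$, and the failure is exactly in the hard direction you flag but do not carry out. Specialize to $F$ a singleton, so that $M_{mc}(S)=M_m(U)$ and $\rho(P,Q)=\sum_n 2^{-n}|P(A_n)-Q(A_n)|$; take $U=[0,1]$ with its Borel $\sigma$-field and let $\mathcal{A}_0$ be the (countable) algebra generated by the intervals $[0,q)$ with rational $q$. For an irrational $x$ and any sequence $x_k\to x$ with $x_k\neq x$, every $A\in\mathcal{A}_0$ is a finite union of intervals with rational endpoints, so $\1_A$ is continuous at $x$ and hence $\delta_{x_k}(A)\to\delta_x(A)$ for all $A\in\mathcal{A}_0$, giving $\rho(\delta_{x_k},\delta_x)\to 0$. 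Yet $\delta_{x_k}(\{x\})=0\not\to 1=\delta_x(\{x\})$, so $\delta_{x_k}\not\to\delta_x$ in $M_m(U)$. Convergence on a generating algebra simply does not force setwise convergence; the $\pi$--$\lambda$ theorem transports \emph{identities} between two fixed measures, not \emph{limits along a sequence} of measures, so the mechanism you sketch in your final paragraph cannot be completed.

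What is both true and sufficient for every use the paper makes of this theorem is that \emph{compact} subsets of $M_{mc}(S)$ are metrizable when $\mathcal{U}$ is separable: your map $P\mapsto\big(E^P[\1_{A_n}g_m]\big)_{n,m}$ is a continuous injection into a metrizable product of bounded intervals, and a continuous injection from a compact Hausdorff space into a Hausdorff space is a homeomorphism onto its image. Combined with Theorem~\ref{theo: ws conv rela comp} this delivers sequential compactness of relatively compact sets, which is precisely what is exploited downstream (see the sentence following the theorem and the proofs of Theorems~\ref{theo: EK FTD} and~\ref{theo: gen JS local uniform topology 2}). If you want the unrestricted statement you should consult Jacod--M\'emin directly for their actual formulation and argument.
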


Thus, in case \(\mathcal{U}\) is separable, any sequence in a relatively compact subset of \(M_{mc}(S)\) has a convergent subsequence. The following theorem gives a criterion for the existence of a convergent subsequence even when \(\mathcal{U}\) is not separable.
\begin{theorem}[Theorem 2.8 in \cite{JM81}] \label{theo: nice version rel comp}
	For each \(n \in \mathbb{N}\) let \(P^n \in M_{mc}(S)\) and \(Q\in M_c(U).\)
	Suppose that \(\{P^n_F \colon n \in \mathbb{N}\}\) is relatively compact in \(M_c(F)\) and that \(P^n_U \equiv Q\) for all \(n \in \mathbb{N}\). Then, there exist a probability measure \(P \in M_{mc}(S)\) and a subsequence \((P^{n_m})_{m \in \mathbb{N}}\) such that \(P^{n_m} \to_{ws} P\) and~\(P_U = Q\).
\end{theorem}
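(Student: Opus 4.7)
The plan is to disintegrate each $P^n$ against its common $U$-marginal $Q$ and to reduce the statement to a convergence question about the resulting kernels. Since $(F, \mathcal{B}(F))$ is Polish and hence standard Borel, regular conditional probability theory yields probability kernels $K_n \colon (U, \mathcal{U}) \to (F, \mathcal{B}(F))$ with $P^n(du, df) = K_n(u, df)\, Q(du)$. By Remark \ref{rem: ws conv, in prob}(iii), the desired conclusion $P^{n_m} \to_{ws} P$ with $P_U = Q$ is equivalent to exhibiting a probability kernel $K$ such that $K_{n_m} f \to K f$ weakly in $L^1(Q)$ for every $f \in C_b(F)$. This reformulation replaces compactness in the non-metrizable space $M_{mc}(S)$ by a weak-compactness question in Banach spaces, for which sequential arguments are available.

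For each fixed $f \in C_b(F)$, the sequence $(K_n f)_n$ is bounded by $\|f\|_\infty$ in $L^\infty(Q)$, hence uniformly integrable on the finite measure space $(U, Q)$ and weakly relatively compact in $L^1(Q)$ by the Dunford--Pettis theorem. Fixing a countable convergence-determining class $\mathcal{D} = \{f_k\}_{k \in \mathbb{N}} \subset C_b(F)$ for $M_c(F)$ (available since $F$ is Polish) and extracting diagonally yields a subsequence $(n_m)$ along which $K_{n_m} f_k \to g_k$ weakly in $L^1(Q)$ for every $k$. To upgrade this to all $f \in C_b(F)$, I would use tightness: by Prokhorov's theorem the assumed relative compactness of $\{P^n_F\}$ in $M_c(F)$ provides, for every $\epsilon > 0$, a compact $K_\epsilon \subset F$ with $\sup_n P^n_F(F \setminus K_\epsilon) < \epsilon$. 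Approximating $f$ uniformly on $K_\epsilon$ by $\mathbb{Q}$-combinations of $f_k$'s (via Stone--Weierstrass, after arranging $\mathcal{D}$ to separate points and contain the constants) and controlling the tail outside $K_\epsilon$ uniformly in the $L^\infty(Q)$-test variable shows that $(K_{n_m} f)_m$ is weakly Cauchy in $L^1(Q)$; weak sequential completeness of $L^1(Q)$ then delivers a weak $L^1$-limit $g_f$ for every $f \in C_b(F)$.

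Finally, the limits $g_f$ must be assembled into a bona fide probability kernel $K$. I would enlarge $\mathcal{D}$ to a countable $\mathbb{Q}$-linear subspace $\mathcal{D}_0 \subset C_b(F)$ containing the constant function $1$ together with functions $h_\ell \in C_b(F)$ satisfying $\mathbf{1}_{F \setminus K_{1/\ell^2}} \leq h_\ell \leq 1$, fix simultaneously $Q$-versions of $g_f$ for $f \in \mathcal{D}_0$, and observe that on the complement of a single $Q$-null set the map $f \mapsto g_f(u)$ is positive, $\mathbb{Q}$-linear, bounded by $\|f\|_\infty$, and sends $1$ to $1$. The bound $\int g_{h_\ell}\, dQ \leq 1/\ell^2$ summable in $\ell$ forces $g_{h_\ell}(u) \to 0$ for $Q$-a.e.\ $u$, which transfers the tightness of $\{P^n_F\}$ to each limit functional $L^u(f) \triangleq g_f(u)$. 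On this conull set, $L^u$ extends by uniform continuity to $C_b(F)$ and, by the Daniell--Riesz theorem, represents a probability measure $K(u, \cdot) \in M_c(F)$; measurability of $u \mapsto K(u, \cdot)$ is ensured by the measurable choice of representatives on the countable class $\mathcal{D}_0$. Setting $P \triangleq K \otimes Q$ then yields the claimed $P^{n_m} \to_{ws} P$ with $P_U = Q$. The main obstacle is precisely this last step: weak $L^1$-limits are defined only modulo $Q$-null sets, whereas a kernel demands pointwise values, so the delicate point is to choose compatible versions across the countable dense class $\mathcal{D}_0$ simultaneously and to verify that tightness descends to the limit so that $K(u, \cdot)$ has unit mass rather than merely being a sub-probability measure.
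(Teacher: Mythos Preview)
The paper does not prove this statement; it is quoted verbatim from \cite{JM81} and used as a black box, so there is no in-paper argument to compare against. Your overall strategy---disintegrate against the common $U$-marginal $Q$, invoke Dunford--Pettis together with Eberlein--\v{S}mulian to extract weak $L^1$-limits of $(K_{n_m}f)_m$ along a countable class, and then reassemble the limits into a kernel---is the standard route and is essentially what Jacod and M\'emin do.

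There is, however, a genuine gap in your tightness-transfer step. You ask for $h_\ell\in C_b(F)$ with $\mathbf{1}_{F\setminus K_{1/\ell^2}}\le h_\ell\le 1$ and then claim $\int g_{h_\ell}\,dQ\le 1/\ell^2$. These two requirements are incompatible: the lower bound on $h_\ell$ yields only $E^{P^{n_m}}[h_\ell]\ge P^{n_m}_F(F\setminus K_{1/\ell^2})$, a \emph{lower} estimate, not the upper bound you need. Reversing the sandwich to $h_\ell\le\mathbf{1}_{F\setminus K_{1/\ell^2}}$ does give the integral bound but then destroys the implication ``$g_{h_\ell}(u)\to 0\Rightarrow L^u$ is tight'', since $L^u(h_\ell)$ now only bounds $L^u(F\setminus K_{1/\ell^2})$ from below. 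In a non-locally-compact Polish space no single continuous $h_\ell$ can do both jobs, because compact sets need not have relatively compact neighborhoods. Relatedly, your extension of $L^u$ ``by uniform continuity to $C_b(F)$'' tacitly assumes $\mathcal D_0$ is sup-norm dense in $C_b(F)$, which is false unless $F$ is compact.

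The clean fix is to embed $F$ as a $G_\delta$ in a compact metrizable space $\hat F$, diagonalize over a countable dense subset of $C(\hat F)$ (restricted to $F$), and apply Riesz representation on $C(\hat F)$ to obtain a probability kernel $\hat K(u,\cdot)$ on $\hat F$ for $Q$-a.e.\ $u$. Tightness of $\{P^n_F\}$ then enters without obstruction: writing $F=\bigcap_j O_j$ with $O_j$ open in $\hat F$, Urysohn functions in $C(\hat F)$ separating the compacts $K_{1/\ell^2}\subset F$ from the closed sets $\hat F\setminus O_j$ yield $\hat K(u,O_j)=1$ a.e.\ for each $j$, whence $\hat K(u,F)=1$ a.e. This compactification detour is precisely what circumvents the obstruction you ran into.
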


Next, we recall a continuous mapping theorem for weak-strong convergence.
For \(A \in \mathcal{S}\) and \(\alpha \in U\) we write
\[
A_\alpha \triangleq \big\{ \omega \in F \colon (\alpha, \omega) \in A\big\} \in \mathcal{B}(F).
\]
\begin{definition} \label{def: continuous ws}
	An \(\mathcal{S}/\mathcal{B}(\mathbb{R})\) measurable function \(g \colon S \to \mathbb{R}\) is called \emph{\((P^n, P)\)-continuous} if there exists a set \(A \in \mathcal{S}\) such that 
	\begin{enumerate}
		\item[\textup{(i)}] \(P^n(A) \to 1\) as \(n \to \infty\), and \(P(A) = 1\).
		\item[\textup{(ii)}] The set   \[
		\{ (\alpha, \omega) \in A \colon A_\alpha \ni \zeta \mapsto g (\alpha, \zeta) \text{ is discontinuous at } \omega\}
		\]
		is \(P\)-null.
	\end{enumerate}
\end{definition}
The following partial version of the Portmanteau theorem can be used to check part (i) in Definition \ref{def: continuous ws}.
\begin{proposition}[Proposition 2.11 in \cite{SPS_1981__15__529_0}] \label{prop: limsup ws conv}
	If \(P^n \to_{ws} P\), then \(\limsup_{n \to \infty} P^n (G) \leq P(G)\) for all \(G \in \mathcal{S}\) such that \(G_\alpha\) is closed in \(F\) for every \(\alpha \in U\).
\end{proposition}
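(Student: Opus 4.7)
The plan is to mimic the classical Portmanteau argument for weak convergence: approximate $\1_G$ from above by a decreasing sequence of functions in $C_S$, use the definition of weak--strong convergence to control their integrals, and pass to the limit by dominated convergence. The section-wise closedness of $G$ will play the role of the closedness of the target set in the usual proof; the fibered structure of $S$ forces one additional measurability step.

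More concretely, I would fix a bounded compatible metric $d \leq 1$ on $F$ (available since $F$ is Polish) and define, for each $k \geq 1$,
\[
f_k(\alpha, \omega) \triangleq \bigl(1 - k\, d(\omega, G_\alpha)\bigr)^+,
\]
with the convention $d(\omega, \emptyset) = +\infty$, so that $f_k(\alpha, \cdot) \equiv 0$ on $\{\alpha : G_\alpha = \emptyset\}$. For each fixed $\alpha \in U$, the closedness of $G_\alpha$ ensures that $\omega \mapsto f_k(\alpha, \omega)$ is $k$-Lipschitz, bounded by $1$, and decreases pointwise to $\1_{G_\alpha}(\omega)$ as $k \to \infty$. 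Provided $f_k$ is jointly $\mathcal{S}$-measurable, we then have $f_k \in C_S$ and $f_k \downarrow \1_G$ on $S$.

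The hard part will be verifying that $(\alpha, \omega) \mapsto d(\omega, G_\alpha)$ is $\mathcal{S}$-measurable, since this is a projection-type statement in the otherwise unstructured space $(U, \mathcal{U})$. My plan is to view $\alpha \mapsto G_\alpha$ as a closed-valued multifunction with $\mathcal{U} \otimes \mathcal{B}(F)$-measurable graph and invoke a Castaing-type representation to obtain a countable family of selectors $(\xi_n)_{n \geq 1}$ such that $\{\xi_n(\alpha) : n \geq 1\}$ is dense in $G_\alpha$ whenever $G_\alpha \neq \emptyset$; then
\[
d(\omega, G_\alpha) = \inf_{n \geq 1} d(\omega, \xi_n(\alpha))
\]
is $\mathcal{S}$-measurable as a countable infimum of jointly measurable maps. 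If $\mathcal{U}$ is not already closed under the required projection, one passes to the universal completion; this is harmless, since integration against the probability measures $P^n$ and $P$ extends unambiguously to the completion.

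With the approximants in place, the conclusion becomes routine. From $\1_G \leq f_k$ on $S$ we get $P^n(G) \leq E^{P^n}[f_k]$; the defining property of $\to_{ws}$ applied to $f_k \in C_S$ gives $E^{P^n}[f_k] \to E^P[f_k]$, hence $\limsup_{n \to \infty} P^n(G) \leq E^P[f_k]$; and letting $k \to \infty$, bounded convergence yields $E^P[f_k] \to P(G)$. The entire argument reduces, therefore, to overcoming the joint-measurability obstacle in the middle paragraph.
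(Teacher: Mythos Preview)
The paper does not prove this proposition; it is quoted from Jacod--M\'emin's S\'eminaire note, so there is no in-paper argument to compare against. Your strategy---a fibered Portmanteau approximation via $f_k(\alpha,\omega)=(1-k\,d(\omega,G_\alpha))^+$---is the natural one and is essentially how the original source proceeds.

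There is, however, a real gap in the step you flag as ``harmless.'' Passing to the universal completion $\hat{\mathcal{U}}$ does make the Castaing representation available and renders $f_k$ measurable with respect to $\hat{\mathcal{U}}\otimes\mathcal{B}(F)$. But the hypothesis $P^n\to_{ws}P$ only guarantees $E^{P^n}[f]\to E^P[f]$ for $f\in C_S$, i.e.\ for $\mathcal{S}=\mathcal{U}\otimes\mathcal{B}(F)$-measurable test functions; it is \emph{not} automatic that this convergence extends to the larger class. Your remark that ``integration extends to the completion'' only says the individual integrals $E^{P^n}[f_k]$ and $E^P[f_k]$ are well-defined, not that the limit relation transfers---yet in the final paragraph you invoke ``the defining property of $\to_{ws}$ applied to $f_k\in C_S$,'' which is precisely what has not been established. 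What rescues the argument is the continuity of $f_k$ in $\omega$. Fix a countable dense set $(y_j)\subset F$ and the probability $\mu=\tfrac12 P_U+\sum_n 2^{-(n+1)}P^n_U$ on $(U,\mathcal{U})$. Each map $\alpha\mapsto f_k(\alpha,y_j)$ is $\hat{\mathcal{U}}$-measurable, hence agrees with some $\mathcal{U}$-measurable $g_j$ off a $\mu$-null set $N_j\in\mathcal{U}$. On $N^c\times F$, with $N=\bigcup_j N_j$, continuity gives $f_k(\alpha,\omega)=\lim_m g_{j_m(\omega)}(\alpha)$ for a Borel choice $j_m(\omega)$ of approximating index; extending this $\mathcal{S}$-measurable expression by $0$ on $N\times F$ produces a genuine element $\hat f_k\in C_S$ that coincides with $f_k$ off the $P$- and $P^n$-null set $N\times F$. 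Now the definition of $\to_{ws}$ applies to $\hat f_k$ and delivers $E^{P^n}[f_k]\to E^P[f_k]$, after which your final paragraph goes through unchanged. The repair is short, but without it the appeal to weak--strong convergence is not justified.
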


\begin{theorem}[Theorem 2.16 in \cite{SPS_1981__15__529_0}] \label{coro: CMT}
	Suppose that \(P^n \to_{ws} P\) and let \(g \colon S \to \mathbb{R}\) be \((P^n, P)\)-continuous such that
	\[\sup_{n \in \mathbb{N}} E^{P^n} [ |g| \1_{\{|g| > a\}} ] \to 0\] as \(a \to \infty\).
	Then, \(E^{P^n} [g] \to E^P [g]\) as \(n \to \infty\).
\end{theorem}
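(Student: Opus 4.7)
The proof proceeds in three steps: a \emph{truncation} to reduce to bounded $g$, a \emph{restriction} to the distinguished set $A$, and a Portmanteau-type bound on level sets via \Cref{prop: limsup ws conv}. Set $g_a \triangleq (g \wedge a) \vee (-a)$. Truncation only shrinks the discontinuity set of the section $\zeta \mapsto g(\alpha, \zeta)$ on $A_\alpha$, so $g_a$ remains $(P^n, P)$-continuous with the same $A$. Granting the bounded case, the uniform integrability hypothesis supplies $\sup_n E^{P^n}[|g|] < \infty$, and applying the bounded case to $|g| \wedge M$ followed by monotone convergence yields $E^P[|g|] < \infty$ together with $E^P[|g| \1_{\{|g| > a\}}] \to 0$ as $a \to \infty$. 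A standard $\varepsilon/3$ split then reduces the general claim to $E^{P^n}[g_a] \to E^P[g_a]$ for each fixed $a$, i.e.~to the case of bounded $g$.

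\textbf{Bounded case.} Assume $|g| \leq M$. Since $P^n(A) \to 1$ and $P(A) = 1$, $|E^{P^n}[g] - E^{P^n}[g \1_A]| \leq M P^n(A^c) \to 0$, so it suffices to prove $E^{P^n}[g \1_A] \to E^P[g \1_A]$. Applying the same argument to $-g$, and translating by $M$, it is enough to establish the one-sided inequality $\limsup_n E^{P^n}[g \1_A] \leq E^P[g \1_A]$ under the additional assumption $0 \leq g \leq 2M$. Layer-cake gives
\[
E^{P^n}[g \1_A] = \int_0^{2M} P^n(G_t)\, dt, \qquad G_t \triangleq A \cap \{g \geq t\},
\]
and similarly for $P$. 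The sections $(G_t)_\alpha$ need not be closed in $F$, so enlarge to $\widetilde G_t$ defined by $(\widetilde G_t)_\alpha \triangleq \overline{(G_t)_\alpha}^F$. Each section of $\widetilde G_t$ is closed, so \Cref{prop: limsup ws conv} yields $\limsup_n P^n(\widetilde G_t) \leq P(\widetilde G_t)$. For the key sequential-continuity check: if $(\alpha, \omega) \in A$ with $g(\alpha, \omega) < t$ and $g(\alpha, \cdot)|_{A_\alpha}$ continuous at $\omega$, then any sequence in $(G_t)_\alpha \subseteq A_\alpha$ converging to $\omega$ in $F$ also converges in $A_\alpha$, and continuity forces $g(\alpha, \omega) \geq t$, a contradiction. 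Hence $\widetilde G_t \setminus G_t$ is $P$-contained in the union of the ($P$-null) discontinuity set of $g$ and $A \cap \{g = t\}$; since $\{t : P(A \cap \{g = t\}) > 0\}$ is at most countable, $P(\widetilde G_t) = P(G_t)$ for Lebesgue-a.e.~$t \in (0, 2M)$. Dominated reversed Fatou then produces
\[
\limsup_n \int_0^{2M} P^n(G_t)\, dt \;\leq\; \int_0^{2M} \limsup_n P^n(\widetilde G_t)\, dt \;\leq\; \int_0^{2M} P(G_t)\, dt \;=\; E^P[g \1_A].
\]

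\textbf{Main obstacle.} The delicate technical point is that the enlarged set $\widetilde G_t$ must belong to $\mathcal{S}$ for \Cref{prop: limsup ws conv} to apply. As $(U, \mathcal{U})$ is only a measurable space, sectionwise closure in $F$ does not automatically preserve joint measurability, and a regularization step — for instance outer approximation of $G_t$ by sets in $\mathcal{S}$ whose $\alpha$-sections are countable unions of balls — is required to land inside the domain of applicability of \Cref{prop: limsup ws conv}. The probabilistic content (the continuity hypothesis kills the enlargement modulo a level-set term) is an essentially immediate sequential-continuity fact, but routing it through sets with closed $\alpha$-sections in a measurable way is the nontrivial kernel of the argument.
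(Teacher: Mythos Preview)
The paper does not prove this theorem; it is quoted directly from Jacod--M\'emin \cite{SPS_1981__15__529_0} without argument, so there is no in-paper proof to compare against.

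On the merits of your outline: the truncation to bounded $g$ is correct and routine, and routing the bounded case through the layer-cake formula and \Cref{prop: limsup ws conv} is a natural plan. But the argument is incomplete precisely where you say it is. To invoke \Cref{prop: limsup ws conv} you need $\widetilde G_t \in \mathcal{S}$, and you have not established this. Writing out the sectionwise closure via a countable base $(V_k)$ of $F$ gives $(\alpha,\omega) \in \widetilde G_t$ iff for every $k$ with $\omega \in V_k$ one has $(G_t)_\alpha \cap V_k \neq \emptyset$; the latter condition cuts out the $U$-projection of $G_t \cap (U \times V_k)$. Since $(U,\mathcal{U})$ is a bare measurable space, no measurable-projection theorem is available, and your proposed ``outer approximation by sets whose $\alpha$-sections are countable unions of balls'' runs into the same projection obstruction rather than around it. So what you have is a proof sketch in which the hard step is named rather than carried out; as it stands it is not a complete proof.
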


At the end of this section we introduce a useful component to build a set \(A\) as in the definition of \((P^n, P)\)-continuity.
In the following, let \((E, r)\) be a Polish space\footnote{\(r\) is the corresponding metric.} and let \(k \colon \mathbb{R}_+ \to \mathbb{R}_+\) be a Borel function such that for every \(t > 0\)
\begin{align}\label{eq: k prop}
\lim_{\varepsilon \searrow 0} \sup \big\{ k(s) \colon s \not = t, t - \varepsilon \leq s \leq t + \varepsilon\big\} = 0.
\end{align}
\begin{lemma} \label{lem: set G finite jump}
	The property \eqref{eq: k prop} holds if and only if for every \(T, a > 0\) there exists no \(t > 0\) such that the set \(\{s \in [0, T] \colon k(s) \geq a\}\) contains a sequence \((t_n)_{n \in \mathbb{N}}\) with \(t_n \not = t\) and \(t_n \to t\) as \(n \to \infty\). In particular, \eqref{eq: k prop} holds if \(\{s \in [0, T] \colon k(s) \geq a\}\) is finite for all~\(T, a > 0\). 
\end{lemma}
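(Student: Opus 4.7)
The plan is to prove the main equivalence by two contrapositive arguments, both of which extract a sequence witnessing the forbidden behavior.

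For the direction $(\Rightarrow)$, I would argue by contradiction: suppose \eqref{eq: k prop} holds but there exist $T, a > 0$ and $t > 0$ such that $\{s \in [0,T] : k(s) \geq a\}$ contains a sequence $(t_n)_{n \in \mathbb{N}}$ with $t_n \neq t$ and $t_n \to t$. Then for every $\varepsilon > 0$, all but finitely many $t_n$ lie in $[t - \varepsilon, t + \varepsilon]$, are different from $t$, and satisfy $k(t_n) \geq a$. Hence
\[
\sup\bigl\{k(s) : s \neq t,\ t - \varepsilon \leq s \leq t + \varepsilon\bigr\} \geq a
\]
for every $\varepsilon > 0$, so the limit in \eqref{eq: k prop} is bounded below by $a > 0$, contradicting the assumption.

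For the direction $(\Leftarrow)$, I would again argue by contradiction. Suppose \eqref{eq: k prop} fails at some $t > 0$. Since the supremum in \eqref{eq: k prop} is monotone non-increasing in $\varepsilon$, its limit as $\varepsilon \searrow 0$ exists in $[0, \infty]$, and by assumption it is strictly positive. Choose $a > 0$ strictly less than this limit. Then for every $\varepsilon > 0$ there exists $s_\varepsilon \neq t$ with $s_\varepsilon \in [t - \varepsilon, t + \varepsilon]$ and $k(s_\varepsilon) \geq a$. Taking $\varepsilon = 1/n$, I obtain a sequence $(s_n)_{n \in \mathbb{N}}$ with $s_n \neq t$, $s_n \to t$, and $k(s_n) \geq a$. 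Choosing any $T > t$, eventually $s_n \in [0, T]$, so $(s_n)$ eventually lies in $\{s \in [0, T] : k(s) \geq a\}$, contradicting the accumulation-free condition.

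The "in particular" statement is then immediate: if $\{s \in [0, T] : k(s) \geq a\}$ is finite for all $T, a > 0$, then it trivially admits no sequence of pairwise distinct points converging to any $t$, so the equivalent condition holds.

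I do not expect a genuine obstacle here; the only subtle point is the elementary fact that the supremum in \eqref{eq: k prop} is monotone in $\varepsilon$, which guarantees that a nonzero limit provides a uniform positive lower bound $a$ from which the contradicting sequence can be extracted.
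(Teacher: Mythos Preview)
Your proof is correct and follows essentially the same contrapositive strategy as the paper's proof. Your $(\Leftarrow)$ direction is slightly cleaner than the paper's: you invoke monotonicity of the supremum in $\varepsilon$ to extract a single witness $s_n$ for each $\varepsilon = 1/n$, whereas the paper builds the sequence iteratively to force distinct terms, which is not actually needed for the contradiction.
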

\begin{proof}
	Let us start with the \emph{if} implication. Take \(t > 0\) and assume for contradiction that there exists a sequence \(\varepsilon_n \searrow 0\) and a constant \(a > 0\) such that \(
	\sup \{ k(s) \colon s \not = t, t - \varepsilon_n \leq s \leq t + \varepsilon_n\} > a 
	\)
	for all \(n \in \mathbb{N}\). There exists an \(s_1 \not = t, t - \varepsilon_1 \leq s_1 \leq t + \varepsilon_1\) such that \(k(s_1) \geq a\). Then, choose \(N \in \mathbb{N}\) such that \(s_1 \not \in [t - \varepsilon_N, t + \varepsilon_N]\) and take \(s_2 \not = t, t - \varepsilon_N \leq s_2 \leq t + \varepsilon_N\) such that \(k(s_2) \geq a\). Proceeding in this manner we get a sequence \(s_1, s_2, \dots\) with \(s_n \not = t, s_n \to t\) and \(k (s_n) \geq a\).
	This is a contradiction and the if implication follows.
	
	We now prove the \emph{only if} implication. For contradiction, assume that \(T, a, t > 0\) are such that there exists a sequence \(t_1, t_2, \dots \in [0, T]\) such that \(t_n \not = t, t_n \to t\) and \(k(t_n) \geq a\). Note that \(|t - t_n| \not = 0\) and that 
	\[
	t - |t - t_n| = |t - t_n + t_n| - |t - t_n| \leq t_n = |t + t_n - t| \leq t + |t - t_n|.
	\]
	Hence, 
	\[
	\liminf_{n \to \infty}  \sup \big\{ k(s) \colon s \not = t, t - |t - t_n| \leq s \leq t + |t - t_n|\big\} \geq \liminf_{n \to \infty} k(t_n) \geq a.
	\]
	As this is a contradiction, the only if implication is also proved.
\end{proof}

\begin{remark}
	It is possible that \eqref{eq: k prop} holds while \(\{t \in [0, T] \colon k(t) \geq a\}\) is infinite for some \(T, a  > 0\). Indeed, take for instance \(k (t) = \sum_{k = 1}^\infty \1_{\{t = 1/k\}}\). 
\end{remark}

To motivate what comes next, suppose that \(\omega_1, \omega_2, \dots \in \D(E)\) is a sequence whose jumps are controlled by \(k\), i.e. \(r (\omega_n (t), \omega_n(t-)) \leq k(t)\) for all \(t > 0\) and \(n \in \mathbb{N}\). Furthermore, suppose that \(\omega \in \D(E)\) is such that \(\omega_n \to \omega\) in the Skorokhod \(J_1\) topology. By standard properties of this topology, for every \(t > 0\) with \(r(\omega(t), \omega(t-)) > 0\) there exists a sequence \(t_n \to t\) such that \(r (\omega_n (t_n), \omega_n (t_n - )) \to r (\omega (t), \omega (t-))\). W.l.o.g. we can assume that there exists an \(a > 0\) such that \(r (\omega_n (t_n), \omega_n(t_n - )) \geq a\) for all \(n \in \mathbb{N}\). By hypothesis, \(k (t_n) \geq r (\omega_n (t_n), \omega_n (t_n-)) \geq a\).  Recalling Lemma \ref{lem: set G finite jump}, we must have \(t_n = t\) for all large enough \(n\). In summary, we obtain convergence of the jumps, i.e. \(r(\omega_n (t), \omega_n (t-)) \to r(\omega(t), \omega (t-))\). This property is certainly necessary for local uniform convergence and, as we will see below, it is even sufficient. Summarizing, for sequences whose jumps are controlled via \(k\) we obtain equivalence of the Skorokhod \(J_1\) and the local uniform topology. As we are mainly interested in continuity properties, this is quite useful. In the following we fill in the remaining details.

Let \(\k \colon \mathbb{R}_+ \to\mathbb{R}_+\) be increasing and continuous. Here, we do \emph{not} assume that \(\k (0) = 0\). In fact, typical examples for \(\k\) could be \(\k \equiv 1\) or \(\k(x) = 1 + x\). Finally, we fix a reference point \(x_0 \in E\).
Define
\[
G \triangleq \Big\{ \omega \in \D(E) \colon r(\omega(t), \omega(t-)) \leq k(t) \k\Big(\sup_{s \leq t}r(\omega(s), x_0)\Big)\text{ for all } t > 0\Big\}. 
\]
Using an exhausting sequence for the jumps of the coordinate process on \(\D(E)\), we immediately see that \(G\in \mathcal{B}(\D(E))\), where \(\D(E)\) is endowed with the Skorokhod \(J_1\) topology. In fact, we can say more, as the following proposition shows. 
\begin{proposition} \label{prop: A local uniform Skorokhod}
	The set \(G\) is closed in \(\D(E)\) for the local uniform and the Skorokhod \(J_1\) topology. Moreover, on \(G\) the Skorokhod \(J_1\) topology coincides with the local uniform topology.
\end{proposition}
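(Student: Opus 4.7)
The statement contains three assertions, which I would address in turn.

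\emph{Closedness in the local uniform topology.} Suppose $\omega_n \in G$ converges to $\omega$ locally uniformly. Uniform convergence on $[0,t]$ directly yields $\omega_n(t) \to \omega(t)$, while a standard $\varepsilon/3$-argument combining uniform convergence with the existence of the left limit $\omega(t-)$ produces $\omega_n(t-) \to \omega(t-)$. Moreover, uniform convergence of $s \mapsto r(\omega_n(s), x_0)$ to $s \mapsto r(\omega(s), x_0)$ on $[0,t]$ implies $\sup_{s \leq t} r(\omega_n(s), x_0) \to \sup_{s \leq t} r(\omega(s), x_0)$. Continuity of $r$ and $\k$ then allows passage to the limit in the defining inequality of $G$, proving $\omega \in G$.

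\emph{Closedness in the $J_1$ topology.} Let $\omega_n \in G$ with $\omega_n \to \omega$ in $J_1$. Fix $t>0$; only the case $\alpha := r(\omega(t), \omega(t-)) > 0$ is non-trivial. By a standard property of the Skorokhod topology on $\D(E)$ (cf.\ \cite[Proposition VI.2.1]{JS}) there exists $t_n \to t$ with $\omega_n(t_n) \to \omega(t)$ and $\omega_n(t_n-) \to \omega(t-)$. The time-change representation of $J_1$-convergence also yields the uniform bound $M^* := \sup_n \sup_{s \leq t+1} r(\omega_n(s), x_0) < \infty$. Using $\omega_n \in G$ at time $t_n$,
\begin{equation*}
\alpha + o(1) = r(\omega_n(t_n), \omega_n(t_n-)) \leq k(t_n) \k(M^*),
\end{equation*}
so that $k(t_n)$ is bounded away from $0$. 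Hypothesis \eqref{eq: k prop} combined with Lemma~\ref{lem: set G finite jump} then forces $t_n = t$ eventually. Applying the continuous mapping theorem in $J_1$ to $x \mapsto r(x, x_0)$ and to the running supremum, together with the monotonicity and right-continuity of $M := \sup_{s \leq \cdot} r(\omega(s), x_0)$, yields $\sup_{s \leq t} r(\omega_n(s), x_0) \to M(t)$. Passing to the limit in the defining inequality, now with $t_n = t$, proves $\omega \in G$.

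\emph{Coincidence of the topologies on $G$.} Since the local uniform topology is stronger than $J_1$, only the converse needs proof. Take $\omega_n, \omega \in G$ with $\omega_n \to \omega$ in $J_1$ and fix $T > 0$, $\varepsilon > 0$. Set $M^* := \sup_n \sup_{s \leq T+1} r(\omega_n(s), x_0) < \infty$ and $\eta := \varepsilon / (4\k(M^*))$. By definition of $G$, any jump of $\omega$ or $\omega_n$ at time $t$ of magnitude at least $\varepsilon/2$ forces $k(t) \geq \eta$; Lemma~\ref{lem: set G finite jump} shows that $\{t \in [\delta, T] : k(t) \geq \eta\} = \{t_1 < \dots < t_m\}$ is finite for every $\delta > 0$. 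Using right-continuity of $\omega$ at $0$, choose $\delta > 0$ with $\sup_{t \leq \delta} r(\omega(t), \omega(0)) < \varepsilon/4$. At each $t_i$ the argument of the preceding part gives $\omega_n(t_i) \to \omega(t_i)$ and $\omega_n(t_i-) \to \omega(t_i-)$. One then modifies the $J_1$ time change $\lambda_n$ to $\tilde\lambda_n$ fixing each $t_i$ (possible since $\lambda_n(t_i) \to t_i$) and bounds $\sup_{t \leq T} r(\omega_n(t), \omega(t))$ by splitting $[0, T]$ into $[0, \delta]$, small neighborhoods of the $t_i$, and the complement, on each of which both $\omega$ and $\omega_n$ exhibit only small-jump oscillation of order $\varepsilon$. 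Letting $\varepsilon \to 0$ finishes the argument.

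\emph{Main obstacle.} The delicate ingredient is the last step: once the big jumps have been localized to a common finite set, transferring $J_1$-convergence into local uniform convergence requires a careful modification of the time change and a uniform bound on the small-jump oscillation on the intervals between the $t_i$. An additional technicality is the possible accumulation of $\{t : k(t) \geq \eta\}$ at $t = 0$, handled by the separate treatment of $[0, \delta]$ via the right-continuity of $\omega$ at the origin.
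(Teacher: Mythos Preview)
Your plan is largely sound, but it is considerably more laborious than the paper's proof and there is a gap in the second step.

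\textbf{Comparison with the paper.} The paper observes that once local-uniform closedness is in hand, it suffices to show: whenever $\omega_n\in G$ and $\omega_n\to\omega$ in $J_1$ (with $\omega\in\D(E)$ \emph{not} assumed to lie in $G$), then $\omega_n\to\omega$ locally uniformly. Both $J_1$-closedness and the coincidence of topologies then follow immediately. For this implication the paper invokes a classical criterion (Skorokhod \cite[Theorem~2.6.2]{doi:10.1137/1101022}, cf.\ \cite[Propositions VI.2.1, VI.2.7]{JS}): under $J_1$-convergence, local uniform convergence holds as soon as $r(\omega_n(t),\omega_n(t-))\to r(\omega(t),\omega(t-))$ at every jump time $t$ of $\omega$. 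This reduces the whole proof to the short argument you already carry out in your second step (find $t_n\to t$, bound $k(t_n)$ from below, apply Lemma~\ref{lem: set G finite jump} to force $t_n=t$ eventually). Your third step is, in effect, a hands-on reproof of this Skorokhod criterion; the paper simply cites it.

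\textbf{The gap.} In your second step, after establishing $t_n=t$ eventually, you pass to the limit in the inequality defining $G$ and claim that $\sup_{s\le t} r(\omega_n(s),x_0)\to M(t)$ by ``the continuous mapping theorem in $J_1$\ldots\ together with the monotonicity and right-continuity of $M$''. The running-supremum map is indeed $J_1$-continuous into $\D(\mathbb{R})$, but pointwise evaluation at $t$ is only $J_1$-continuous at continuity points of the limit, and here $t$ is by assumption a jump time of $\omega$, hence typically a jump time of $M$. Monotonicity and right-continuity of $M$ alone do not rescue this. The statement is in fact true once you know $t_n=t$ for \emph{every} jump time of $\omega$ in $[0,t]$ (since then the time change may be taken to fix all such times and $M_n(t)=M_n(\lambda_n(t))\to M(t)$), but that is precisely the content of your third step, which you have not yet carried out. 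The cleanest fix is to reorganise as the paper does: prove the $J_1\Rightarrow$ locally-uniform implication first (for arbitrary $\omega\in\D(E)$), and derive $J_1$-closedness as a corollary of that together with local-uniform closedness. This also removes your reliance on $\omega\in G$ in the third step.
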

\begin{proof}
	First of all, as \(\k\) is continuous, it is easy to see that \(G\) is closed in the local uniform topology. 
	Hence, it suffices to prove the second claim, i.e. that the Skorokhod \(J_1\) and the local uniform topology coincide on \(G\). Of course, we only need to show that Skorokhod \(J_1\) convergence implies local uniform convergence. 
	Take \(\omega_1, \omega_2, \dots \in G\) and \(\omega \in\D(E)\) such that \(\omega_n \to \omega\) in the Skorokhod \(J_1\) topology. By virtue of \cite[Theorem~2.6.2]{doi:10.1137/1101022} and \cite[Propositions VI.2.1, VI.2.7]{JS}, it suffices to prove that \(r(\omega_n (t), \omega_n(t-)) \to r(\omega(t), \omega(t-))\) for all \(t > 0\) such that \(r(\omega(t), \omega(t-)) > 0\). We fix such a \(t > 0\). Thanks to \cite[Problem~16, p. 152]{EK} (or \cite[Theorem 2.7.1]{doi:10.1137/1101022}), there exists a compact set \(K = K_t \subset E\) such that \(\omega_n (s) \in K\) for all \(s \leq t + 1\) and \(n \in \mathbb{N}\). Hence, taking into account that \(\k\) is increasing, there exists a constant \(C > 0\) such that 
	\[
	\sup_{n \in \mathbb{N}} \kappa \Big( \sup_{s \leq t + 1} r (\omega_n (s), x_0)\Big) \leq \k \Big(\sup_{x \in K} r(x, x_0)\Big) \leq C.
	\]
	It is well-known (\cite[Proposition VI.2.1]{JS}) that there exists a sequence \(t_n \to t\) with
	\(
	r(\omega_n(t_n),\) \(\omega_n(t_n -)) \to r(\omega(t), \omega(t-)).
	\)
	Now, for large enough \(n\) we get
	\(
	r(\omega_n (t_n), \omega_n(t_n-))\) \(\leq C k (t_n)
	\)
	and Lemma \ref{lem: set G finite jump} yields that \(t_n = t\) for large enough \(n\). This implies \(r(\omega_n(t), \omega_n(t-))\) \(\to r(\omega(t), \omega(t-))\) and the proof is complete.
\end{proof}
Versions of Proposition \ref{prop: A local uniform Skorokhod} for \(E = \mathbb{R}^d\) and \(\kappa \equiv 1\) or \(\kappa (x) = 1 + x\) are given as \cite[Lemma~4.2]{doi:10.1080/17442508108833169} and \cite[Lemma 3.6]{JM81}. 

In Section \ref{sec: stab semi ftd} below we use a randomized version of the set \(G\) and the continuous mapping theorem for weak-strong convergence to relax the continuity assumptions in certain stability results for semimartingales, and to derive a version of the Ethier--Kurtz stability theorem associated to test processes of the type~\eqref{eq: test EK FTD}. The randomization is important as it allows for a much more flexible jump structure than the set \(G\) might suggest.

\subsection{Identifying Weak Limits via Abstract Martingale Problems}
The classical martingale problems from Examples \ref{ex: SV}, \ref{ex: EK} and \ref{ex: SMP} proved themselves as valuable tools to identify weak limits of stochastic processes.
In the following we discuss such an application for the abstract martingale problem as introduced in Definition~\ref{def:MP}.

\subsubsection{The Setting}
We recall our setting: 

\begin{listing}
	\label{ass 1}
	Let \((\Omega, \mathcal{F}, \F)\) be a filtered space which supports a family \(\Y\) of \(\overline{\mathbb{R}}\)-valued right-continuous adapted processes. Furthermore, let \((U, \mathcal{U})\) be a measurable space and let \((E, \mathcal{B} (E))\) and \((F, \mathcal{B}(F))\) be Polish spaces with their Borel \(\sigma\)-fields. Moreover, let \(X = (X_t)_{t \geq 0}\) be a measurable \(E\)-valued process  such that for each \(\omega \in \Omega\) the process \(X(\omega)\) is an element of the Polish space \(F\) and the map \(\Omega \ni \omega \mapsto X (\omega) \in F\) is \(\mathcal{F}/\mathcal{B}(F)\) measurable. Finally, let \(L\) be a \(U\)-valued random variable on \((\Omega, \mathcal{F})\). As in Section \ref{sec: WS conv}, we also define the product space \(S \triangleq U \times F\) and \(\mathcal{S} \triangleq \mathcal{U} \otimes \mathcal{B}(F)\).
\end{listing}
\begin{example}
	\begin{enumerate}
		\item[(i)] In many classical cases
		\(X\) has \cadlag or even continuous paths and it is natural to take \(F = \D(E)\) or \(\C(E)\) (endowed with the Skorokhod \(J_1\) topology\footnote{On \(\C(E)\) the Skorokhod \(J_1\) coincides with the local uniform topology, see \cite[Problem 25, p. 153]{EK}.} which renders \(\D(E)\) and \(\C(E)\) into Polish spaces). 
		\item[(ii)] Take a separable real Banach space \(E = (E, \|\cdot\|)\) and suppose that \(X\) solves (in some sense) an SPDE with state space \(E\). For certain SPDEs it is not known whether the paths \(\mathbb{R}_+ \ni t \mapsto X_t \in E\) are \cadlag or continuous. In these cases one cannot use \(F = \D(E)\) or \(\C(E)\). The paths of \(X\) are often known to be locally \(p\)-integrable for some \(p \geq 1\), i.e. \[\int_0^t \|X_s\|^p ds < \infty, \quad t \in \mathbb{R}_+,\] and one can take \(F = L_\textup{loc}^p (\mathbb{R}_+, E)\), cf. \cite[Theorem 7.5]{DePrato}. We refer to \cite{FG95} for another example of a non-standard path space used in SPDE settings. In the same spirit, the path space \(F = L^p_\textup{loc}(\mathbb{R}_+, \mathbb{R}^d) \times \D(\mathbb{R}^k)\) is a natural state space for solutions to VSDEs as introduced in Example \ref{ex: volterra}.
	\end{enumerate}
\end{example}
\begin{definition} \label{def: determining} 
	We call a familiy \(\Z = \{\Z_t, t \in \mathbb{R}_+\}\) a \emph{determining set} (for \(\Y\)), if it has the following properties:
	\begin{enumerate}
		\item[\textup{(i)}] 
		For every \(t \in \mathbb{R}_+\), \(\Z_t\) consists of bounded \(\mathcal{S} / \mathcal{B}(\mathbb{R})\) measurable functions~\(S \to \mathbb{R}\).
		\item[\textup{(ii)}] For every probability measure $P$ on $(\Omega, \mathcal F)$, \(Y \in \Y\) and \(s < t\) with \(Y_t, Y_s \in L^1(P)\) the following implication holds:
		\begin{align*}
		E^P \big[ Y_t Z^\circ_s (L, X) \big] = E^P \big[ Y_s Z^\circ_s (L, X) \big] &\text{ for all } Z^\circ_s \in \Z_s\ 
		\Longrightarrow \ P\text{-a.s. } E^P \big[ Y_t | \cF_s \big] = Y_s.
		\end{align*}
	\end{enumerate}
\end{definition}
We now describe typical 
determining sets for two important settings.
\begin{example} \label{ex: determining set}
	\begin{enumerate}
		\item[\textup{(i)}] Suppose that
		\(\mathcal{F}_t = \sigma (X_s, s \leq t)\) for \(t \in \mathbb{R}_+\).
		Take \(F = \D(E)\) or \(\C(E)\) and denote the corresponding coordinate process by \(\X\). Then, for any dense set \(D \subset \mathbb{R}_+\) the family \(\Z = \{\Z_t, t \in \mathbb{R}_+\}\) defined by 
		\begin{align*}
		\qquad \Z_t \triangleq \Big\{ \prod_{i = 1}^n \ h_i (\X_{t_i}) \colon n \in \mathbb{N}, t_1, \dots, t_n \in D \cap [0, t], h_1, \dots, h_n \in C_b (E)\Big\}
		\end{align*}
		is determining. Let us shortly explain that (i) and (ii) in Definition \ref{def: determining} are satisfied: As \(\mathcal{B}(F) = \sigma (\X_t, t \in \mathbb{R}_+)\) (see \cite[Proposition 3.7.1]{EK}), part (i) is obvious. Part (ii) follows from the monotone class theorem.
		\item[\textup{(ii)}]
		Take \(E = \mathbb{R}, F = L^p_\textup{loc} (\mathbb{R}_+, \mathbb{R})\) for \(p \geq 1\) and let \(\X \colon F \to F\) be the identity. Furthermore, assume that \(X\) is progressively measurable w.r.t. \(\cF_t \triangleq \sigma (X_s, s \leq t)\) and
		\begin{align} \label{eq: X = liminf}
		X_t = \hat{X}_t \triangleq \liminf_{n \to \infty} \Big(n\int_{t}^{t + \frac{1}{n}} X_s ds \Big), \quad t \in \mathbb{R}_+.
		\end{align}
		By Lebesgue's differentiation theorem we always have \(X_t = \hat{X}_t\) for a.a. \(t \in \mathbb{R}_+\) and hence, as \(X\) should be considered as an \(L^p_\textup{loc}(\mathbb{R}_+, \mathbb{R})\)-valued random variable, the last assumption is essentially without loss of generality.
		Then, for any dense set \(D \subset \mathbb{R}_+\) the family \(\Z = \{\Z_t, t \in \mathbb{R}_+\}\) defined by
		\begin{align*}
		\qquad \quad \Z_t \triangleq \Big\{ \prod_{i = 1}^n \ h_i \Big(\int_0^{t_i} \X_s ds\Big) \colon n \in \mathbb{N}, t_1, \dots, t_n \in D \cap [0, t], h_1, \dots, h_n \in C_b (\mathbb{R})\Big\}
		\end{align*}
		is determining. Part (i) in Definition \ref{def: determining} follows from the fact that the maps
		\[
		L^p_\textup{loc} (\mathbb{R}_+, \mathbb{R}) \ni f = (f(s))_{s \geq 0} \mapsto \int_0^{t} f(s) ds, \quad t \in \mathbb{R}_+,
		\]
		are continuous. For (ii) it suffices to use the monotone class theorem together with the observation that
		\[
		\cF_t = \mathcal{G}_t \triangleq \sigma \Big(\int_0^s X_u du, s \leq t\Big).
		\]
		Here, the inclusion \(\mathcal{G}_t \subset \cF_t\) is clear and the converse inclusion follows from \(X = \hat{X}\). To see this, note that \(\hat{X}\) is \((\mathcal{G}_{t+})_{t \geq 0} \triangleq (\mathcal{H}_t)_{t \geq 0}\)-predictable (as pointwise limit of continuous processes) and thus \((\mathcal{H}_{t-})_{t \geq 0}\) adapted. As \(\mathcal{H}_{t-} \subset \mathcal{G}_t\), \(\hat{X}\) is \((\mathcal{G}_t)_{t \geq 0}\)-adapted and \(X = \hat{X}\) implies \(\mathcal{F}_t \subset \mathcal{G}_t\).
	\end{enumerate}
\end{example}

\begin{definition}
	We call \(\Y\) \emph{canonical}, if for every \(t \in \mathbb{R}_+\) there exists a set \(\Y^\circ_t\) of \(\mathcal{S}/\mathcal{B}(\overline{\mathbb{R}})\) measurable functions \(Y^\circ_t \colon S \to \overline{\mathbb{R}}\) such that for every \(Y \in \Y\) there exists a \(Y^\circ_t \in \Y^\circ_t\) such that \(Y_t = Y^\circ_t (L, X)\). We call \((Y^\circ_t)_{t \geq 0}\) a \emph{canonical version} of \(Y\).
\end{definition}

\begin{example} The sets \(\Y\) in Examples \ref{ex: SV}, \ref{ex: EK}, \ref{ex: SMP}, \ref{ex: SPDE} and \ref{ex: volterra} are canonical.
\end{example}

\subsubsection{The Results} \label{sec: results}
In addition to Assumption \ref{ass 1}, we assume the following:

\begin{listing}
	\label{ass 2}
	For every \(n \in \mathbb{N}\), let \(\B^n \triangleq (\Omega^n, \mathcal{F}^n, (\mathcal{F}^n_t)_{t \geq 0}, P^n)\) be a filtered probability space, which supports a \(U\)-valued random variable \(L^n\) and an \(E\)-valued measurable processes \(X^n = (X^n_t)_{t \geq 0}\) such that for every \(\omega \in \Omega\) the process \(X^n (\omega)\) is an element of \(F\) and the map \(\Omega^n \ni \omega \mapsto X^n (\omega) \in F\) is \(\mathcal{F}^n/\mathcal{B}(F)\) measurable. Moreover,  we fix a probability measure \(P\) on \((\Omega, \mathcal{F})\) and denote 
	\(
	Q^n \triangleq P^n \circ (L^n, X^n)^{-1}\) and \(Q \triangleq P \circ (L, X)^{-1}.
	\)
\end{listing}

Recall Definition \ref{def: ws conv} for the concept of weak-strong convergence, and Definition \ref{def: continuous ws} for the concept of \((Q^n, Q)\)-continuity.
\begin{theorem} \label{theo: main1 pre}
	Let Assumptions \ref{ass 1} and \ref{ass 2} hold, \(D \subset \mathbb{R}_+\) be dense, and assume the following:
	\begin{enumerate}
		\item[\textup{(A1)}] \(Q^n \to_{ws} Q\).
		\item[\textup{(A2)}] There exists a determining set \(\Z = \{\Z_t, t \in \mathbb{R}_+\}\) for \(\Y\). 
		\item[\textup{(A3)}] \(\Y\) is canonical and for every \(Y \in \Y\) there exists a canonical version \((Y^\circ_t)_{t \geq 0}\) such that for every \(t \in D, s \in D \cap [0, t]\) and \(Z^\circ_s \in \Z_s\) the following hold: \(Y^\circ_t\) and 
		\(Y^\circ_t Z^\circ_s\) are \((Q^n, Q)\)-continuous, the set 
		\(
		\{Y^\circ_r (L^n, X^n) \colon r \in D \cap [0, t], n \in \mathbb{N}\}
		\)
		is uniformly integrable, and 
		\begin{align} \label{eq: conv cond}
		\lim_{n \to \infty} E^{P^n} \big[ (Y^\circ_t (L^n, X^n) - Y^\circ_s (L^n, X^n)) Z^\circ_s (L^n, X^n) \big] = 0.
		\end{align}
	\end{enumerate}
	Then, \(P\) solves the MP \((\Y)\), i.e. \(P \in \cM(\Y)\).
\end{theorem}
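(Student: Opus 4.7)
The plan is to verify that every $Y \in \mathfrak{Y}$ is a $(\mathbf{F}, P)$-martingale, by first establishing the martingale identity along the dense set $D$ via the continuous mapping theorem for weak-strong convergence (Theorem \ref{coro: CMT}), and then extending to arbitrary times via right-continuity of $Y$. Fix $s \leq t$ in $D$ and $Z^\circ_s \in \mathcal{Z}_s$. Applying Theorem \ref{coro: CMT} to the $(Q^n, Q)$-continuous function $Y^\circ_t Z^\circ_s$ (the uniform integrability of $\{Y^\circ_r(L^n, X^n)\}$ combined with boundedness of $Z^\circ_s$ verifies the integrability hypothesis) yields
\[
E^{P^n}\bigl[Y^\circ_t(L^n, X^n)\, Z^\circ_s(L^n, X^n)\bigr] \longrightarrow E^P\bigl[Y_t\, Z^\circ_s(L, X)\bigr],
\]
and, invoking (A3) with the pair $(s, s)$ to obtain $(Q^n, Q)$-continuity of $Y^\circ_s Z^\circ_s$, the analogous statement with $s$ in place of $t$. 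Combining these with the convergence \eqref{eq: conv cond} from (A3) gives $E^P[Y_t Z^\circ_s(L, X)] = E^P[Y_s Z^\circ_s(L, X)]$. Since $\mathcal{Z}$ is determining for $\mathfrak{Y}$ by (A2), this identity forces $E^P[Y_t \mid \mathcal{F}_s] = Y_s$ $P$-almost surely.

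To extend from $D$ to all of $\mathbb{R}_+$, I first transfer uniform integrability to the limit. Fix $\tau \in D$. For $r \in D \cap [0, \tau]$, Theorem \ref{coro: CMT} applied to the $(Q^n, Q)$-continuous compositions $|Y^\circ_r|$ and $(|Y^\circ_r| - K)^+$ (whose required integrability hypothesis follows from the assumed uniform integrability, since both are dominated by $|Y^\circ_r|$) yields $\sup_{r} E^P[|Y_r|] < \infty$ and $\sup_{r} E^P[(|Y_r| - K)^+] \to 0$ as $K \to \infty$, where the suprema range over $r \in D \cap [0, \tau]$. The elementary inequality $|y|\mathbf{1}_{\{|y| > 2K\}} \leq 2(|y| - K)^+$ then upgrades these bounds to full uniform integrability of $\{Y_r : r \in D \cap [0, \tau]\}$ under $P$. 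Now, for arbitrary $0 \leq s < t$, choose $\tau \in D$ with $\tau > t$ and sequences $s_k, t_k \in D \cap (s, \tau]$ with $s_k \downarrow s$ and $t < t_k \downarrow t$. For $A \in \mathcal{F}_s \subset \mathcal{F}_{s_k}$, the previous paragraph gives $E^P[Y_{t_k}\mathbf{1}_A] = E^P[Y_{s_k}\mathbf{1}_A]$, and right-continuity of $Y$ together with the uniform integrability just established allows passage to the limit, yielding $E^P[Y_t \mathbf{1}_A] = E^P[Y_s \mathbf{1}_A]$, and hence $E^P[Y_t \mid \mathcal{F}_s] = Y_s$. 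Integrability $Y_t \in L^1(P)$ follows from Fatou's lemma applied along the sequence $Y_{t_k}$.

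The main obstacle is the transfer of uniform integrability in the second paragraph: the standard formulation of UI involves the discontinuous indicator $\mathbf{1}_{\{|y| > K\}}$, which is not directly amenable to Theorem \ref{coro: CMT}, as the latter demands $(Q^n, Q)$-continuous integrands. The workaround is to use the continuous surrogate $(|y|-K)^+$ and then to compare the two via the elementary inequality above. Once this transfer is in place, the rest of the argument is a straightforward combination of the continuous mapping theorem, the determining-set axiom, and the standard extension of a right-continuous martingale from a dense set to all of $\mathbb{R}_+$.
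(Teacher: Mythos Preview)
Your proof is correct and follows essentially the same route as the paper's: establish the martingale identity on the dense set $D$ via the continuous mapping theorem for weak-strong convergence together with the determining-set axiom, transfer uniform integrability from $\{Y^\circ_r(L^n,X^n)\}$ to $\{Y_r\}$ under $P$, and then extend to arbitrary times by right-continuity and Vitali's theorem. The only cosmetic difference is that you phrase the uniform-integrability transfer via the surrogate $(|y|-K)^+$ and the inequality $|y|\mathbf{1}_{\{|y|>2K\}}\le 2(|y|-K)^+$, whereas the paper invokes Lemma~\ref{lem: UI JS}; but since $(|y|-K)^+=|y|-|y|\wedge K$, these are literally the same criterion.
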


Before we prove this theorem, we briefly recall \cite[Lemma IX.1.11]{JS}, which is very useful in the following.
\begin{lemma} \label{lem: UI JS}
	A family \(\{Z_i \colon i \in I\}\) is uniformly integrable if and only if \[\sup_{i \in I} E \big[ |Z_i| - |Z_i| \wedge z \big] \to 0 \text{ as } z \to \infty.\]
\end{lemma}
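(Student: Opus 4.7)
The plan is to prove the lemma by reducing both directions to elementary deterministic inequalities involving the function $\varphi_z(x) \triangleq x - x \wedge z = (x-z)^+$, and then taking suitable suprema. The key identity is
\[
|Z_i| - |Z_i| \wedge z = (|Z_i| - z)^+ = (|Z_i| - z)\1_{\{|Z_i| > z\}},
\]
which lets us compare $E[|Z_i| - |Z_i| \wedge z]$ with the classical UI quantity $E[|Z_i| \1_{\{|Z_i| > a\}}]$ appearing in the standard definition of uniform integrability.

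For the ``only if'' direction, I would assume $\{Z_i \colon i \in I\}$ is uniformly integrable in the standard sense, and simply observe that $(|Z_i| - z)^+ \leq |Z_i| \1_{\{|Z_i| > z\}}$ pointwise. Taking expectations and then the supremum over $i \in I$ gives
\[
\sup_{i \in I} E\bigl[|Z_i| - |Z_i| \wedge z\bigr] \leq \sup_{i \in I} E\bigl[|Z_i|\1_{\{|Z_i| > z\}}\bigr] \xrightarrow[z \to \infty]{} 0
\]
by uniform integrability.

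For the ``if'' direction, I would first extract $L^1$-boundedness: choosing $z_0$ large enough so that $\sup_i E[|Z_i| - |Z_i|\wedge z_0] \leq 1$ gives $\sup_i E[|Z_i|] \leq z_0 + 1 < \infty$. Then the main step is the elementary pointwise inequality
\[
|Z_i| \1_{\{|Z_i| > 2z\}} \leq 2 (|Z_i| - z)^+,
\]
valid because on $\{|Z_i| > 2z\}$ we have $|Z_i| - z > |Z_i|/2$. Substituting $a = 2z$ and taking expectations yields
\[
\sup_{i \in I} E\bigl[|Z_i|\1_{\{|Z_i| > a\}}\bigr] \leq 2 \sup_{i \in I} E\bigl[|Z_i| - |Z_i| \wedge (a/2)\bigr] \xrightarrow[a \to \infty]{} 0,
\]
which is the standard UI condition. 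Combined with the $L^1$-bound this establishes uniform integrability.

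There is no real obstacle here: the whole argument reduces to the two sandwich inequalities $(|Z_i|-z)^+ \leq |Z_i|\1_{\{|Z_i|>z\}} \leq 2(|Z_i|-z/2)^+$, after which the equivalence of the two suprema tending to zero is immediate. If anything, the only point worth stating carefully is the extraction of $L^1$-boundedness from the hypothesis, which is needed to match the standard definition of uniform integrability (in some formulations $L^1$-boundedness is part of the definition, in others it is a consequence).
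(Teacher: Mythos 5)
Your argument is correct: the two sandwich inequalities \((|Z_i|-z)^+ \leq |Z_i|\1_{\{|Z_i|>z\}}\) and \(|Z_i|\1_{\{|Z_i|>2z\}} \leq 2(|Z_i|-z)^+\) are both valid (on \(\{|Z_i|>2z\}\) one has \(|Z_i|-z>|Z_i|/2\)), and the extraction of the \(L^1\)-bound from the hypothesis is handled properly, so the equivalence with the standard tail criterion for uniform integrability follows. Note, however, that the paper does not prove this lemma at all: it is simply recalled from Jacod and Shiryaev \cite[Lemma IX.1.11]{JS}, so there is no in-paper argument to compare against. Your proof is the standard, elementary one and has the merit of making the statement self-contained rather than outsourced to the literature; the only point worth keeping explicit, as you do, is the remark that \(L^1\)-boundedness is either part of the definition of uniform integrability or recovered from the displayed condition via \(E[|Z_i|] \leq z_0 + \sup_i E[|Z_i|-|Z_i|\wedge z_0]\).
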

\begin{proof}[Proof of Theorem \ref{theo: main1 pre}]
	We have to prove that every \(Y \in \Y\) is a martingale on \((\Omega, \mathcal{F}, \bF, P)\). Fix \(Y \in \Y\) with canonical version \(Y^\circ = (Y^\circ_t)_{t \geq 0}\) and take \(s, t \in D\) such that \(s < t\) and \(Z^\circ_s \in \Z_s\). Using the canonical property of $Y$ in the first, the $(Q^n, Q)$-continuity of $Y^\circ Z^\circ$ and Theorem \ref{coro: CMT} in the second, and \eqref{eq: conv cond} in the third equality, we find
	\begin{equation}\label{eq T361}
	\begin{split}
	E^{P} \big[(Y_t - Y_s) Z^\circ_s(L, X) \big] & = E^{P} \big[(Y_t^\circ(L, X) - Y_s^\circ(L, X)) Z^\circ_s(L, X) \big]  \\ & = \lim_{n \to \infty} E^{P^n} \big[ (Y^\circ_t (L^n, X^n) - Y^\circ_s (L^n, X^n)) Z^\circ_s (L^n, X^n)\big] = 0. 
	\end{split}
	\end{equation}
	Thus, by (A2) and part (ii) of Definition \ref{def: determining}, we conclude that \(P\)-a.s.
	\(
	E^P \big[ Y_t | \cF_s \big] = Y_s.
	\)

	Next, we show this identity for general \(s < t\). We start by showing that the set \(\{Y^\circ_s (L, X) \colon s \in D \cap [0, t]\}\) is uniformly integrable for every \(t \in \mathbb{R}_+\). Let \(s \in D\).
	The \((Q^n, Q)\)-continuity of \(Y^\circ_s\) and Theorem \ref{coro: CMT} yield that
	\begin{align*}
	E^P \big[ |Y^\circ_s (L, X)| - |Y^\circ_s (L, X)| \wedge N \big] &= \lim_{n \to \infty} E^{P^n} \big[ |Y^\circ_s (L^n, X^n)| - |Y^\circ_s (L^n, X^n)| \wedge N \big]
	\\&\leq \sup_{n \in \mathbb{N}} E^{P^n} \big[ |Y^\circ_s (L^n, X^n)| - |Y^\circ_s (L^n, X^n)| \wedge N \big].
	\end{align*}
	Thus,
	\begin{align*}
	\sup_{s\in D\cap [0,t]} &E^P \big[|Y_s^\circ(L, X)| - |Y_s^\circ(L, X)| \wedge N \big] \\&\leq  \sup_{s\in D\cap [0,t]}\sup_{n \in \mathbb{N}} E^{P_n}\big[|Y_s^\circ(L^n,X^n)| - |Y_s^\circ(L^n,X^n)| \wedge N \big] \to 0 
	\end{align*}
	as $N\to\infty$ by Lemma \ref{lem: UI JS}. Another application of Lemma \ref{lem: UI JS} implies that the set \(\{Y_s \colon s \in D \cap [0, t]\} = \{Y^\circ_s (L, X) \colon s \in D \cap [0, t]\}\) is uniformly integrable.
	
	Now, let \(s < t\) be arbitrary, i.e. not necessarily in the set \(D\).
	As \(D\) is dense in \(\mathbb{R}_+\), there are sequences \(t_n \searrow t\) and \(s_n \searrow s\) in $D$ such that \(s_n < t_n\) for all \(n \in \mathbb{N}\). 
	The right-continuity of \(Y\) and Vitali's theorem yield that for every \(G \in \cF_s\) we have
	\begin{align}\label{eq T362}
	E^P \big[ Y_t \1_G \big] = \lim_{n \to \infty} E^P \big[ Y_{t_n} \1_G \big] = \lim_{n \to \infty} E^P \big[ Y_{s_n} \1_G \big] = E^P \big[ Y_s \1_G \big].
	\end{align}
	We conclude the \(P\)-martingale property of \(Y\). The proof is complete.
\end{proof}

\begin{remark}
	In case (A1) holds and \(P \in \cM (\Y)\), \eqref{eq: conv cond} has to hold under the continuity assumptions in (A3).
\end{remark}

Let us also comment on the case without control variables, which can be captured with the assumption that \(U\) is a singleton. We will simplify our notation for this situation and remove \(L^n\) and \(L\).
To clarify our terminology, we write \(X^n \to X\) weakly when the laws of \(X^n\) converge in \(M_c(F)\) to the law of \(X\). Moreover, we call a Borel function \(f \colon F \to \mathbb{R}\) to be \emph{\(P\)-continuous at \(X\)}, if there exists a set \(C \in \mathcal{B}(F)\) such that \(P(X \in C) = 1\) and \(f (s_n) \to f(s)\) whenever \(s_n \to s \in C\). The following is an immediate consequence of Theorem \ref{theo: main1 pre}.

\begin{corollary} \label{coro: main coro singleton}
	Let Assumptions \ref{ass 1} and \ref{ass 2} hold, \(U\) be a singleton, \(D \subset \mathbb{R}_+\) be dense, and assume the following:
	\begin{enumerate}
		\item[\textup{(S1)}] \(X^n \to X\) weakly.
		\item[\textup{(S2)}] There exists a determining set \(\Z = \{\Z_t, t \in \mathbb{R}_+\}\) for \(\Y\). 
		\item[\textup{(S3)}] \(\Y\) is canonical and for every \(Y \in \Y\) there exists a canonical version \((Y^\circ_t)_{t \geq 0}\) such that for all \(t \in D, s \in D \cap [0, t]\) and \(Z^\circ_s \in \Z_s\) the following hold: \(Y^\circ_t\) and 
		\(Y^\circ_t Z^\circ_s\) are \(P\)-continuous at \(X\), the set 
		\(
		\{Y^\circ_r (X^n) \colon r \in D \cap [0, t], n \in \mathbb{N}\}
		\)
		is uniformly integrable, and 
		\begin{align*}
		\lim_{n \to \infty} E^{P^n} \big[ (Y^\circ_t (X^n) - Y^\circ_s (X^n)) Z^\circ_s (X^n) \big] = 0.
		\end{align*}
	\end{enumerate}
	Then, \(P\) solves the MP \((\Y)\), i.e. \(P \in \cM(\Y)\).
\end{corollary}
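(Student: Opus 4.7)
The plan is to deduce Corollary \ref{coro: main coro singleton} directly from Theorem \ref{theo: main1 pre} by observing that, in the singleton case, the relevant hypotheses transparently become the assumptions already built into the theorem. Throughout, I take the trivial random variables $L, L^1, L^2, \dots$ with values in $U = \{*\}$, so that $Q^n$ and $Q$ are simply the laws of $X^n$ and $X$ lifted to $S = U \times F$ in the obvious way. By Remark \ref{rem: ws conv, in prob}(i), (S1) is then equivalent to (A1), while (S2) literally coincides with (A2).

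The only real step that needs justification is the passage from $P$-continuity at $X$ to $(Q^n, Q)$-continuity within the product framework. Given $Y \in \Y$ with canonical version $(Y^\circ_t)_{t \geq 0}$, each $Y^\circ_t$ in the singleton case depends only on the $F$-coordinate, and the same is true for any $Z^\circ_s \in \Z_s$. To verify $(Q^n, Q)$-continuity of such a function, say $Y^\circ_t Z^\circ_s$, I would take the witnessing set in Definition \ref{def: continuous ws} to be $A = S$; condition (i) of that definition is then trivial. For condition (ii), the set of discontinuities inside $A = S$ equals $U \times D$, where $D \subset F$ is the discontinuity set of the corresponding map viewed as a function on $F$. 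Under (S3) this map is $P$-continuous at $X$, so there is a Borel set $C \subset F$ with $P(X \in C) = 1$ on which it is sequentially continuous; hence $D \subset F \setminus C$ and $Q(U \times D) = P(X \in D) \leq P(X \notin C) = 0$, as required. The same argument applies to $Y^\circ_t$ alone.

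With this identification in hand, the uniform integrability assumption and the limit relation in (S3) are word-for-word the corresponding statements in (A3) once $L^n$ and $L$ are suppressed. Thus all assumptions of Theorem \ref{theo: main1 pre} are met and the conclusion $P \in \cM(\Y)$ follows. I do not foresee any substantive obstacle: the only mild subtlety is confirming that the classical notion of $P$-continuity at $X$, as phrased in the Corollary, fits the more elaborate Definition \ref{def: continuous ws}, and this is resolved by the computation above.
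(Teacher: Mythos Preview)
Your proposal is correct and follows exactly the route the paper intends: the paper states the corollary as ``an immediate consequence of Theorem \ref{theo: main1 pre}'' without further argument, and you simply spell out why the singleton case collapses (A1)--(A3) to (S1)--(S3), with the only nontrivial check being that $P$-continuity at $X$ yields $(Q^n,Q)$-continuity via $A=S$. That verification is sound.
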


As the following proposition shows, \eqref{eq: conv cond} in Theorem \ref{theo: main1 pre} holds in case \(Y^\circ (L^n, X^n)\) can be approximated by a sequence of martingales on \(\B^n\).
\begin{proposition} \label{prop: mg approx}
	Let all assumptions from Theorem \ref{theo: main1 pre} hold, except \eqref{eq: conv cond}. Suppose that for every \(s \in D, Z^\circ_s \in \Z_s\) the random variable \(Z^\circ_s (L^n, X^n)\) is \(\mathcal{F}^n_s\)-measurable, and that there exists a sequence \((Y^n)_{n \in \mathbb{N}}\) such that \(Y^n\) is a martingale on \(\B^n\). If 
	\begin{align}
	\label{eq: conv cond 2}
	\lim_{n \to \infty} E^{P^n} \big[ (Y^n_t - Y^\circ_t (L^n, X^n)) Z^\circ_s (L^n, X^n)\big] = 0,\quad s, t \in D, s \leq t, Z^\circ_s \in \Z_s,
	\end{align}
	then \eqref{eq: conv cond} holds.
	In particular, \eqref{eq: conv cond} holds in case
	\begin{align}\label{eq: L1 cond}
	\lim_{n \to \infty}E^{P^n} \big[ |Y^n_t - Y^\circ_t (L^n, X^n)| \big] = 0.
	\end{align}
\end{proposition}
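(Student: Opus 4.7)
My approach is to reduce \eqref{eq: conv cond} to the given hypothesis \eqref{eq: conv cond 2} through a standard \emph{add-and-subtract} decomposition that inserts the martingale approximations at both times. Fixing \(s, t \in D\) with \(s \leq t\) and \(Z^\circ_s \in \Z_s\), I will write
\[
Y^\circ_t(L^n, X^n) - Y^\circ_s(L^n, X^n) = \big(Y^\circ_t(L^n, X^n) - Y^n_t\big) + \big(Y^n_t - Y^n_s\big) + \big(Y^n_s - Y^\circ_s(L^n, X^n)\big),
\]
multiply through by \(Z^\circ_s(L^n, X^n)\), take \(P^n\)-expectations, and handle the three summands separately.

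The first and third pieces will vanish as \(n \to \infty\) directly from the hypothesis \eqref{eq: conv cond 2} (the third by invoking it with both times equal to \(s\), using \(s \leq s\)). The middle piece is where the martingale property enters: since by assumption \(Z^\circ_s(L^n, X^n)\) is \(\mathcal{F}^n_s\)-measurable and every element of \(\Z_s\) is a bounded function (Definition \ref{def: determining}(i)), and since \(Y^n\) is an \(\B^n\)-martingale with \(Y^n_t \in L^1(P^n)\), conditioning on \(\mathcal{F}^n_s\) gives
\[
E^{P^n}\big[(Y^n_t - Y^n_s)\, Z^\circ_s(L^n, X^n)\big] = E^{P^n}\big[\big(E^{P^n}[Y^n_t \mid \mathcal{F}^n_s] - Y^n_s\big) Z^\circ_s(L^n, X^n)\big] = 0.
\]
Summing the three contributions yields \eqref{eq: conv cond}.

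For the final ``in particular'' clause, I will exploit boundedness of \(Z^\circ_s\) once more: the trivial estimate
\[
\big|E^{P^n}[(Y^n_t - Y^\circ_t(L^n, X^n))\, Z^\circ_s(L^n, X^n)]\big| \leq \|Z^\circ_s\|_\infty\, E^{P^n}\big[|Y^n_t - Y^\circ_t(L^n, X^n)|\big]
\]
shows that \eqref{eq: L1 cond} implies \eqref{eq: conv cond 2}, so the first part of the proposition applies. I expect no genuine obstacle; the one point meriting a word of care is integrability of \(Y^n_t Z^\circ_s(L^n, X^n)\), needed to legitimately take the conditional expectation, but this is immediate from \(Y^n_t \in L^1(P^n)\) together with boundedness of \(Z^\circ_s\). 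The conceptual content is thus that \(Y^n\) supplies the missing exact martingale identity at each finite \(n\), while \eqref{eq: conv cond 2} absorbs the approximation error.
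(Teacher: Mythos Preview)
Your proof is correct and follows essentially the same add-and-subtract decomposition as the paper: insert \(Y^n_t - Y^n_s\), kill the middle term via the martingale property and \(\mathcal{F}^n_s\)-measurability of \(Z^\circ_s(L^n,X^n)\), and handle the two endpoint errors via \eqref{eq: conv cond 2}. Your treatment is in fact more explicit than the paper's, which compresses the argument into a single displayed equality and dispatches the ``in particular'' clause with the remark that the second claim follows from the first.
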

\begin{proof}
	Using the martingale property of $Y^n$ in the first, and \eqref{eq: conv cond 2} in the second equality, the hypothesis yields that 
	\begin{align*}
	\lim_{n \to \infty} &E^{P^n} \big[ (Y^\circ_t (L^n, X^n) - Y^\circ_s (L^n, X^n)) Z^\circ_s(L^n, X^n) \big] \\&= \lim_{n \to \infty} E^{P^n} \big[ (Y^\circ_t (L^n, X^n) - Y^n_t + Y^n_s - Y^\circ_s (L^n, X^n)) Z^\circ_s (L^n, X^n) \big] = 0. 
	\end{align*}
	The second claim follows from the first.
\end{proof}

\begin{remark}
	By Vitali's theorem, \eqref{eq: L1 cond} can be replaced by uniform integrability and convergence in probability: If the family \(\{|Y^n_t - Y^\circ_t (L^n, X^n)|\colon n \in \mathbb{N}\}\) is uniformly integrable and for all \(\varepsilon > 0\)
	\[
	P^n (|Y^n_t - Y^\circ_t (L^n,X^n)| \geq \varepsilon) \to 0 \text{ as }n \to \infty,
	\]
	then \eqref{eq: L1 cond} holds. 
\end{remark}
We now replace the uniform integrability assumption in (A3) by a uniform integrability assumption on the approximating martingales.
\begin{theorem} \label{theo: main approx}
	Let all assumptions from Theorem \ref{theo: main1 pre} hold, except \textup{(A3)}. Suppose that for every \(s \in D, Z^\circ_s \in \Z_s\) the random variable \(Z^\circ_s (L^n, X^n)\) is \(\mathcal{F}^n_s\)-measurable, and that the following holds:
	\begin{enumerate}
		\item[\textup{(A4)}] \(\Y\) is canonical and for every \(Y \in \Y\) there exists a canonical version \((Y^\circ_t)_{t \geq 0}\) such that for every \(t \in D, s \in D \cap [0, t]\) and \(Z^\circ_s \in \Z_s\) the following hold: \(Y^\circ_t\) and 
		\(Y^\circ_t Z^\circ_s\) are \((Q^n, Q)\)-continuous. Moreover, there exists a sequence \((Y^n)_{n \in \mathbb{N}}\) such that \(Y^n\) is a martingale on \(\B^n\), the set \(\{Y^n_s \colon s \in D \cap [0, t], n \in \mathbb{N}\}\) is uniformly integrable and 
		\begin{align} \label{eq: main conv cond mart approx}
		\lim_{n \to \infty} P^n (|Y^n_t - Y^\circ_t (L^n, X^n)| \geq \varepsilon) = 0, \quad \varepsilon > 0.
		\end{align}
	\end{enumerate}
	Then, \(P\) solves the MP \((\Y)\), i.e. \(P \in \cM(\Y)\).
	
\end{theorem}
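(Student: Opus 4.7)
The plan is to mirror the proof of Theorem \ref{theo: main1 pre}, with the uniform integrability of the test processes \(\{Y^\circ_r(L^n, X^n)\}\) replaced throughout by the hypothesis that the approximating martingales \(\{Y^n_r\}\) are uniformly integrable. The obstruction is that Theorem \ref{coro: CMT} cannot be applied to the unbounded product \(Y^\circ_t Z^\circ_s\) without uniform integrability of \(\{Y^\circ_t(L^n, X^n)\}\), which is \emph{not} a direct consequence of \eqref{eq: main conv cond mart approx} since convergence in probability alone does not transfer uniform integrability from one sequence to another. I circumvent this via a smooth truncation that respects the joint \((Q^n, Q)\)-continuity of \(Y^\circ_t Z^\circ_s\).

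Fix \(\phi \in C_c(\mathbb{R})\) with \(\phi \equiv 1\) on \([-1, 1]\) and \(\operatorname{supp}\phi \subset [-2, 2]\), and set \(\phi_K(y) \triangleq \phi(y/K)\), \(\psi_K(y) \triangleq y\,\phi_K(y)\). Then \(\psi_K\) is bounded by \(2K\), Lipschitz with a constant independent of \(K\), and \(|y - \psi_K(y)| \leq |y|\,\mathds{1}_{\{|y| > K\}}\). The decisive point is that \(\psi_K(Y^\circ_t)\,Z^\circ_s = \phi_K(Y^\circ_t) \cdot (Y^\circ_t Z^\circ_s)\) is bounded and \((Q^n, Q)\)-continuous, because \(\phi_K \circ Y^\circ_t\) is \((Q^n, Q)\)-continuous (composition with a continuous function) and \(Y^\circ_t Z^\circ_s\) is \((Q^n, Q)\)-continuous by (A4). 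Moreover, uniform Lipschitzness plus \(P^n\)-convergence in probability of \(Y^\circ_r(L^n, X^n) - Y^n_r\) to zero, combined with the uniform bound \(2K\), yields
\[
E^{P^n}\bigl[|\psi_K(Y^\circ_r(L^n, X^n)) - \psi_K(Y^n_r)|\bigr] \to 0 \quad \text{as } n \to \infty
\]
for each fixed \(K\) and \(r \in D\).

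Using this, I would first establish that \(Y_r = Y^\circ_r(L, X) \in L^1(P)\) for every \(r \in D\): applying Theorem \ref{coro: CMT} to the bounded \((Q^n, Q)\)-continuous function \(|\psi_K(Y^\circ_r)|\), together with the displayed approximation and the uniform \(L^1\)-bound on \((Y^n_r)\) coming from (A4), yields \(E^P[|\psi_K(Y^\circ_r(L, X))|] \leq \sup_{n, r \in D \cap [0, t]} E^{P^n}[|Y^n_r|] \triangleq M < \infty\); Fatou as \(K \to \infty\) then gives \(Y_r \in L^1(P)\). Next, fix \(s < t\) in \(D\), \(Z^\circ_s \in \Z_s\) with \(\|Z^\circ_s\|_\infty \leq C\), and \(\varepsilon > 0\), and choose \(K\) so that \(E^{P^n}[|Y^n_r|\,\mathds{1}_{\{|Y^n_r| > K\}}] < \varepsilon\) uniformly in \(n\) and \(r \in \{s, t\}\) (by UI of \(Y^n\)) and \(E^P[|Y_r|\,\mathds{1}_{\{|Y_r| > K\}}] < \varepsilon\) for \(r \in \{s, t\}\) (using \(Y_r \in L^1(P)\)). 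Since \(Y^n\) is a martingale and \(Z^\circ_s(L^n, X^n)\) is \(\mathcal{F}^n_s\)-measurable, \(E^{P^n}[(Y^n_t - Y^n_s) Z^\circ_s(L^n, X^n)] = 0\); combining with \(|y - \psi_K(y)| \leq |y|\,\mathds{1}_{\{|y|>K\}}\) gives \(|E^{P^n}[(\psi_K(Y^n_t) - \psi_K(Y^n_s)) Z^\circ_s(L^n, X^n)]| \leq 2C\varepsilon\). Replacing \(\psi_K(Y^n_r)\) by \(\psi_K(Y^\circ_r(L^n, X^n))\) (with vanishing \(L^1\)-error) and passing \(n \to \infty\) via Theorem \ref{coro: CMT} yields \(|E^P[(\psi_K(Y^\circ_t(L, X)) - \psi_K(Y^\circ_s(L, X))) Z^\circ_s(L, X)]| \leq 2C\varepsilon\); the remaining \(\psi_K\)-to-identity error is dominated by \(C|Y_r| \in L^1(P)\) with total mass at most \(2C\varepsilon\), so \(|E^P[(Y_t - Y_s) Z^\circ_s(L, X)]| \leq 4C\varepsilon\), and \(\varepsilon \searrow 0\) gives the martingale identity on \(D\).

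From here, the determining property (A2) produces \(Y_s = E^P[Y_t \mid \mathcal{F}_s]\) \(P\)-a.s.\ for \(s < t\) in \(D\); picking \(t^* \in D\) larger than any horizon \(T > 0\) and writing \(Y_r = E^P[Y_{t^*} \mid \mathcal{F}_r]\) for \(r \in D \cap [0, T]\) furnishes uniform integrability of \(\{Y_r : r \in D \cap [0, T]\}\), and the extension to arbitrary \(s < t \in \mathbb{R}_+\) then follows by the right-continuity/Vitali argument at the end of the proof of Theorem \ref{theo: main1 pre}. I expect the main obstacle to be the design of the truncation \(\psi_K\): it must simultaneously be bounded (so that Theorem \ref{coro: CMT} applies without a uniform integrability hypothesis), uniformly Lipschitz in \(K\) (so that \(L^1\)-closeness of \(\psi_K(Y^\circ_r)\) and \(\psi_K(Y^n_r)\) follows from convergence in probability), \emph{and} such that \(\psi_K(Y^\circ_t) Z^\circ_s\) is \((Q^n, Q)\)-continuous; the multiplicative factorization \(\psi_K(y) = y\,\phi_K(y)\) makes this last property work, because only the joint \((Q^n, Q)\)-continuity of \(Y^\circ_t Z^\circ_s\) (and not that of \(Z^\circ_s\) alone) is guaranteed by (A4).
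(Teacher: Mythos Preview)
Your proof is correct. It parallels the paper's argument closely—both truncate, exploit that the truncation is Lipschitz and bounded to transfer \eqref{eq: main conv cond mart approx} into an \(L^1\) estimate, and invoke Theorem~\ref{coro: CMT} on the bounded truncated quantities—but the order of the two main steps is reversed. The paper first establishes uniform integrability of \(\{Y^\circ_s(L,X) : s \in D \cap [0,t]\}\) directly (via Lemma~\ref{lem: UI JS} and the hard cutoff \(y \mapsto (y \vee(-N))\wedge N\)) and then proves the martingale identity on \(D\) through a four-term decomposition; you instead obtain only \(Y_r \in L^1(P)\) first, prove the martingale identity on \(D\), and recover uniform integrability \emph{a posteriori} from the representation \(Y_r = E^P[Y_{t^*}\mid\mathcal{F}_r]\). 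Your multiplicative factorization \(\psi_K(Y^\circ_t)Z^\circ_s = \phi_K(Y^\circ_t)\cdot(Y^\circ_t Z^\circ_s)\) is a clean way to certify the \((Q^n,Q)\)-continuity of the truncated product from the hypotheses in (A4) alone, since \(Z^\circ_s\) itself is not assumed \((Q^n,Q)\)-continuous; the paper's analogous step (its term \(I_2\)) relies on the same mechanism—writing \(((Y^\circ_t\vee(-N))\wedge N)\,Z^\circ_s\) as a continuous function of the pair \((Y^\circ_t,\,Y^\circ_t Z^\circ_s)\)—but leaves it implicit.
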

\begin{proof}
	It is not hard to see that the proof of Theorem \ref{theo: main1 pre} remains valid in case the following two properties hold:
	\begin{align}\label{eq: to show3}
	&\{Y^\circ_s (L, X) \colon s \in D \cap [0, t]\} \text{ is uniformly integrable for all \(t \in D\),} 
	\\\label{eq: to show4}
	&E^{P^n} \big[ Y^n_t Z^\circ_s (L^n, X^n) \big] \to E^P \big[ Y^\circ_t (L, X) Z^\circ_s (L, X) \big] \text{ for all } s, t \in D, s \leq t, Z^\circ_s \in \Z_s.
	\end{align}
	Indeed, \eqref{eq: to show3} suffices for \eqref{eq T362}, and if \eqref{eq: to show4} holds, we write for \(s, t \in D, s < t, Z^\circ_s \in \mathcal Z_s^\circ\)
	\begin{align*}
	E^P\big[(Y_t - Y_s)Z^\circ_s(L, X)\big] & = E^P\big[(Y_t^\circ(L, X) - Y_s^\circ(L, X))Z^\circ_s(L, X)\big] \\ & = \lim_{n\to\infty} E^{P_n}\big[(Y_t^n - Y_s^n)Z^\circ_s(L^n, X^n)\big] = 0,
	\end{align*}
	i.e.\ the conclusion of \eqref{eq T361} holds as well. 
	
	For \eqref{eq: to show3}, note that \eqref{eq: main conv cond mart approx} and Theorem \ref{coro: CMT} yield that for all $N>0$ and $t\in D$
	\begin{align*}
	\lim_{n \to \infty} & \big|E^{P^n} \big[ |Y^n_t| \wedge N \big] - E^P \big[ |Y^\circ_t (L, X)| \wedge N \big] \big| \\ & \leq \lim_{n \to \infty} E^{P^n} \big[ |Y^n_t - Y^\circ_t (L^n, X^n)|\wedge N  \big] \\&\qquad\qquad+ \lim_{n \to \infty} \big| E^{P_n} \big[ |Y^\circ_t (L^n, X^n)| \wedge N\big] - E^P\big[|Y^\circ_t (L, X)| \wedge N \big] \big| = 0.
	\end{align*}
	Hence, for every \(N > 0\) and \(t \in D\) we have 
	\begin{align*}
	E^P \big[ |Y^\circ_t (L, X)| - |Y^\circ_t (L,X)| \wedge N \big] &= \lim_{m \to \infty} E^P \big[ |Y^\circ_t (L, X)| \wedge m - |Y^\circ_t (L, X)| \wedge N \big]
	\\&= \lim_{m \to \infty} \lim_{n \to \infty}  E^{P^n} \big[ |Y^n_t| \wedge m - |Y^n_t| \wedge N \big]
	\\&\leq \sup_{n \in \mathbb{N}} E^{P^n} \big[ |Y^n_t| - |Y^n_t| \wedge N \big].
	\end{align*}
	Together with uniform integrability of $\{Y_s^n: s\in D \cap[0,t], n\in\mathbb N\}$ and Lemma \ref{lem: UI JS}, this inequality yields \eqref{eq: to show3}.
	Next, we verify \eqref{eq: to show4}. For \(s, t \in D, s \leq t, Z^\circ_s \in \Z_s\) and \(N > 0\), we obtain
	\begin{equation*} 
	\begin{split}
	\big| E^P \big[ Y^\circ_t  (L, X) &Z^\circ_s (L, X) \big] - E^{P^n} \big[ Y^n_t Z^\circ_s(X^n) \big] \big| \\&\lesssim E^P \big[ |Y^\circ_t (L,X)|- |Y_t^\circ (L,X) | \wedge \N  \big] \\ & \quad \quad + \big| E^P \big[(Y^\circ_t (L,X) \vee (- \N) \wedge \N) Z^\circ_s (L,X) \big] 
	\\ & \qquad \qquad \qquad - E^{P^n} \big[(Y^\circ_t (L^n,X^n) \vee (- \N) \wedge \N) Z^\circ_s(L^n,X^n) \big] \big|
	\\&\quad\quad + E^{P^n} \big[ \big| Y^\circ_t (L^n,X^n) \vee (-N) \wedge N - Y^n_t \vee (-N) \wedge N \big| \big]
	\\&\quad\quad+ E^{P^n} \big[ |Y^n_t | - |Y^n_t | \wedge \N | \big]
	\\&\triangleq I_1 + I_2 + I_3 + I_4.
	\end{split}
	\end{equation*}
	Theorem \ref{coro: CMT} yields that \(I_2 \to 0\) as \(n \to \infty\). Moreover, \eqref{eq: main conv cond mart approx} implies that \(I_3 \to 0\) as \(n \to \infty\). Finally, uniform integrability and Lemma \ref{lem: UI JS} yield that \(I_1 + I_4 \to 0\) as \(N \to \infty\) uniformly in \(n\). In summary, we conclude that \eqref{eq: to show4} holds and hence the proof is complete.
\end{proof}

In Theorem \ref{theo: main approx} we do not impose integrability assumptions on the elements of \(\Y\) but on its approximation sequences. Hence, Theorems \ref{theo: main1 pre} and \ref{theo: main approx} have different scopes and do not imply each other.

Finally, let us again comment on the case without control variables. The following is an immediate consequence of Theorem \ref{theo: main approx}.
\begin{corollary} \label{coro: main singleton approx}
	Let all assumptions from Corollary \ref{coro: main coro singleton} hold, except \textup{(S3)}. Suppose that for every \(s \in D\) and \(Z^\circ_s \in \Z_s\) the random variable \(Z^\circ_s (X^n)\) is \(\mathcal{F}^n_s\)-measurable, and that the following holds:
	\begin{enumerate}
		\item[\textup{(S4)}] \(\Y\) is canonical and for every \(Y \in \Y\) there exists a canonical version \((Y^\circ_t)_{t \geq 0}\) such that for every \(t \in D, s \in D \cap [0, t]\) and \(Z^\circ_s \in \Z_s\) the following hold: \(Y^\circ_t\) and 
		\(Y^\circ_t Z^\circ_s\) are \(P\)-continuous at~\(X\). Moreover, there exists a sequence \((Y^n)_{n \in \mathbb{N}}\) such that \(Y^n\) is a martingale on \(\B^n\), the set \(\{Y^n_s \colon s \in D \cap [0, t], n \in \mathbb{N}\}\) is uniformly integrable and 
		\begin{align*} 
		\lim_{n \to \infty} P^n (|Y^n_t - Y^\circ_t (X^n)| \geq \varepsilon) = 0, \quad \varepsilon > 0.
		\end{align*}
	\end{enumerate}
	Then, \(P\) solves the MP \((\Y)\), i.e. \(P \in \cM(\Y)\).
	
\end{corollary}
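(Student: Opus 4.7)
The plan is to derive Corollary~\ref{coro: main singleton approx} directly from Theorem~\ref{theo: main approx} by specializing to the case that \(U\) is a singleton and verifying that every hypothesis of the theorem is implied by its singleton counterpart. This is mostly bookkeeping, so the main task is to identify cleanly how (S1), (S2), (S4) translate into (A1), (A2), (A4).

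First, I would take \(U = \{\star\}\) with \(\mathcal{U} = \{\emptyset, U\}\), so that \(L\) and \(L^n\) are deterministic and carry no information. Under the canonical identification \(S = U \times F \cong F\), we have \(Q^n = P^n \circ (X^n)^{-1}\) and \(Q = P \circ X^{-1}\). By Remark~\ref{rem: ws conv, in prob}(i), the weak-strong convergence \(Q^n \to_{ws} Q\) on \(S\) reduces to ordinary weak convergence of the laws of \(X^n\) to the law of \(X\) in \(M_c(F)\); this is exactly (S1), so (A1) holds. Hypothesis (A2) coincides verbatim with (S2). Moreover, since \(\mathcal{U}\) is trivial, canonical versions \(Y^\circ_t\) and the determining functions \(Z^\circ_s\) depend only on the \(F\)-coordinate, so for each \(s \in D\) and \(Z^\circ_s \in \Z_s\) the random variable \(Z^\circ_s(L^n, X^n) = Z^\circ_s(X^n)\) is \(\mathcal{F}^n_s\)-measurable by assumption.

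The content of the reduction lies in showing that \(P\)-continuity at \(X\) implies \((Q^n, Q)\)-continuity. Let \(g \colon F \to \mathbb{R}\) be a bounded Borel function which is \(P\)-continuous at \(X\) in the sense defined before Corollary~\ref{coro: main coro singleton}, witnessed by some \(C \in \mathcal{B}(F)\) with \(P(X \in C) = 1\) and \(g(s_n) \to g(s)\) whenever \(s_n \to s \in C\). Extending \(g\) trivially to \(S\), set \(A \triangleq U \times F \in \mathcal{S}\). Then \(P^n(A) = P(A) = 1\), so condition (i) of Definition~\ref{def: continuous ws} is satisfied trivially. For (ii), for the unique \(\alpha \in U\) we have \(A_\alpha = F\), and the set
\[
\{(\alpha, \omega) \in A \colon F \ni \zeta \mapsto g(\zeta) \text{ is discontinuous at } \omega\}
\]
is contained in \(U \times (F \setminus C)\), which carries \(Q\)-measure \(P(X \in F \setminus C) = 0\). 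Hence \(g\) is \((Q^n, Q)\)-continuous. Applying this to \(g = Y^\circ_t\) and \(g = Y^\circ_t Z^\circ_s\) (the latter still being \(P\)-continuous at \(X\) as a product of two such functions, one of which is bounded) gives the continuity part of (A4).

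The remaining ingredients of (A4), namely the existence of the approximating martingales \(Y^n\) on \(\B^n\), the uniform integrability of \(\{Y^n_s \colon s \in D \cap [0,t], n \in \mathbb{N}\}\), and the convergence in probability \(P^n(|Y^n_t - Y^\circ_t(X^n)| \geq \varepsilon) \to 0\), are literally the same as in (S4) once one identifies \(Y^\circ_t(L^n, X^n)\) with \(Y^\circ_t(X^n)\). With all hypotheses of Theorem~\ref{theo: main approx} verified, its conclusion gives \(P \in \cM(\Y)\), which is what we wanted. Since the argument is purely definitional, I do not anticipate any genuine obstacle; the only point requiring a moment of care is the passage from pointwise sequential continuity on \(C\) (the definition of \(P\)-continuity at \(X\)) to the product-form discontinuity set used in Definition~\ref{def: continuous ws}, and this is immediate because \(F\) is metrizable.
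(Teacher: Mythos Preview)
Your proof is correct and follows exactly the route the paper intends: the paper simply declares the corollary an immediate consequence of Theorem~\ref{theo: main approx}, and you have spelled out the specialization to a singleton \(U\). One harmless slip: the parenthetical justifying \(P\)-continuity of \(Y^\circ_t Z^\circ_s\) as a product is superfluous (it is assumed directly in (S4)), and the word ``bounded'' in your continuity argument for \(g\) is unneeded and does not apply to \(Y^\circ_t\), but the argument goes through verbatim without it.
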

In the next section we relate the results above to known theorems from the literature. Thereafter, in Section \ref{sec: stab semi ftd} we present new results which are tailored to processes with fixed times of discontinuity. For these results it is crucial that we can work with the concept of weak-strong convergence.

\section{Relation to Existing Results} \label{sec: Revisited Literature main section}
The purpose of this section is to specialize the terminologies introduced in the previous section to three examples taken from the literature:  In Section \ref{sec: rec EK} we recover the classical convergence theorem for Markovian martingale problems as presented in the monograph \cite{EK} by Ethier and Kurtz, and in Section \ref{sec: rec VSDE} we prove a mild generalization of a stability result for Volterra SDEs from \cite{jaber:hal-02279033}. Finally, in Section \ref{sec: rec JS} we localize a theorem by Jacod and Shiryaev \cite{JS} for semimartingales by replacing a global with a local boundedness hypothesis on the semimartingale characteristics. Such a generalization has been announced in \cite{JS}, but was not stated in a precise manner. We believe it to be useful for future applications and therefore of independent interest.

\subsection{Relation to a Theorem by Ethier and Kurtz} \label{sec: rec EK}
Let \(E\) be a Polish space, for every \(n \in \mathbb{N}\) let \(\B =(\Omega, \mathcal{F}, \F, P)\) and \(\B^n = (\Omega^n, \mathcal{F}^n, \F^n, P^n)\) be filtered probability spaces which support \(E\)-valued \cadlag adapted processes \(X\) and \(X^n\), respectively. Moreover, suppose that the filtration \(\F\) on $\B$ is generated by \(X\).
Let \(A \subset C_b (E) \times C_b (E)\) and define \(\Y\) to be the set of the following processes:
\begin{align} \label{eq: main EK test processes}
f (X) - f(X_0) - \int_0^\cdot g (X_s) ds, \quad (f, g) \in A.
\end{align}
Moreover, for every \(n \in \mathbb{N}\) let \(\Y^n\) be a set of pairs \((\xi, \phi)\) consisting of real-valued progressively measurable processes on \(\B^n\) such that 
\[
\sup_{s \leq T} E^{P^n} \big[ |\xi_s| + |\phi_s| \big] < \infty, \quad T > 0,
\]
and such that
\[
\xi - \int_0^\cdot \phi_s ds 
\]
is a martingale on $\mathbb B^n$.
The following theorem is a  version of the implication (c\('\)) $\Rightarrow$ (a\('\)) from \cite[Theorem 4.8.10]{EK}.
\begin{theorem} \label{theo: EK restate}
	Suppose that \(X^n \to X\) weakly on \(\D(E)\) endowed with the Skorokhod \(J_1\) topology, and that there exists a set \(\Gamma \subset \mathbb{R}_+\) with countable complement such that for each \((f, g) \in A\) and \(T > 0\), there exists a sequence \((\xi^n, \phi^n) \in \Y^n\) such that 
	\begin{align}\label{eq: EK UI1}
	\sup_{n \in \mathbb{N}} \sup_{s \leq T} E^{P^n} \big[ |\xi^n_s| + |\phi^n_s| \big] < \infty,
	\end{align}
	\begin{align} \label{eq: EK UI2}
	\lim_{n \to \infty} E^{P^n} \Big[ (\xi^n_t - f(X^n_t)) \prod_{i = 1}^k h_i (X^n_{t_i}) \Big] = 0,
	\end{align}
	\begin{align} \label{eq: EK UI3}
	\lim_{n \to \infty} E^{P^n} \Big[ \int_s^t (\phi^n_u - g(X^n_u)) du \prod_{i = 1}^k h_i (X^n_{t_i}) \Big] = 0,
	\end{align}
	for all \(k \in \mathbb{N}, t_1, \dots, t_k \in \Gamma \cap [0, t], t \in \Gamma \cap [0, T]\), \(h_1, \dots, h_k \in C_b (E)\).
	Then, \(P\in \cM(\Y)\).
\end{theorem}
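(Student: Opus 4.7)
The plan is to apply Corollary \ref{coro: main coro singleton} with $U$ a singleton, $F = \D(E)$ endowed with the Skorokhod $J_1$ topology, and, for each $Y = f(X) - f(X_0) - \int_0^\cdot g(X_s) ds \in \Y$, the canonical version
\[
Y^\circ_t(\omega) \triangleq f(\omega(t)) - f(\omega(0)) - \int_0^t g(\omega(u)) du, \qquad \omega \in \D(E).
\]
As determining set I take the one from Example \ref{ex: determining set}(i) built over a dense $D \subset \mathbb{R}_+$ which I choose as $D \triangleq (\Gamma \cap N^c) \cup \{0\}$, where $N \triangleq \{t > 0 : P(\Delta X_t \neq 0) > 0\}$ is the (classical) countable set of fixed times of discontinuity of $X$ under $P$; $D$ is dense in $\mathbb{R}_+$ because $\Gamma^c \cup N$ is countable.

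The continuity verifications for (S3) are then immediate. For $t \in D$, the path projection $\omega \mapsto \omega(t)$ is $J_1$-continuous at every $\omega$ continuous at $t$, hence $\omega \mapsto f(\omega(t))$ is $P$-continuous at $X$; and $\omega \mapsto \int_0^t g(\omega(u))du$ is continuous on all of $\D(E)$, since $J_1$-convergence yields pointwise convergence at all (cocountably many) continuity points of the limit and bounded convergence handles the integral. Thus both $Y^\circ_t$ and every product $Y^\circ_t Z^\circ_s$ are $P$-continuous at $X$ whenever $t$ and the test times $t_i$ lie in $D$. Boundedness of $f, g$ gives the uniform estimate $\|Y^\circ_r\|_\infty \leq 2\|f\|_\infty + r\|g\|_\infty$, which trivially delivers the required uniform integrability of $\{Y^\circ_r(X^n) : r \in D \cap [0,t], n \in \mathbb{N}\}$.

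The heart of the argument is to verify \eqref{eq: conv cond}. Fix $(f, g) \in A$ and $T > 0$, and let $(\xi^n, \phi^n) \in \Y^n$ be the associated approximating sequence, so that $Y^n \triangleq \xi^n - \int_0^\cdot \phi^n_u du$ is a $\B^n$-martingale. For $s \leq t$ both in $D \cap [0, T]$ and $Z^\circ_s(X^n) = \prod_{i = 1}^k h_i(X^n_{t_i})$ with $t_i \in D \cap [0, s]$, decompose
\[
\bigl(Y^\circ_t(X^n) - Y^\circ_s(X^n)\bigr) - \bigl(Y^n_t - Y^n_s\bigr) = [f(X^n_t) - \xi^n_t] - [f(X^n_s) - \xi^n_s] - \int_s^t [g(X^n_u) - \phi^n_u] du.
\]
Multiplying by $Z^\circ_s(X^n)$ and taking $E^{P^n}$, the three resulting terms tend to zero by \eqref{eq: EK UI2} (applied at $t$ and at $s$, both lying in $\Gamma \cap [0, T]$ since $D \subseteq \Gamma$) and \eqref{eq: EK UI3}; the test indices $t_i \in D \cap [0, s] \subseteq \Gamma \cap [0, t]$ meet the quantifier range. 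Since $Z^\circ_s(X^n)$ is $\mathcal{F}^n_s$-measurable and $Y^n$ is a martingale, $E^{P^n}[(Y^n_t - Y^n_s) Z^\circ_s(X^n)] = 0$, so $E^{P^n}[(Y^\circ_t(X^n) - Y^\circ_s(X^n)) Z^\circ_s(X^n)] \to 0$, which is exactly \eqref{eq: conv cond}.

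Corollary \ref{coro: main coro singleton} then yields $P \in \cM(\Y)$. The principal subtlety is the coordinated choice of $D$, which must simultaneously lie in $\Gamma$ (so that the approximation conditions \eqref{eq: EK UI2}–\eqref{eq: EK UI3} apply), avoid $N$ (so that the finite-dimensional projections are $P$-continuous at $X$), and be dense in $\mathbb{R}_+$ (so as to generate a determining class via Example \ref{ex: determining set}(i)). The remaining ingredients are bookkeeping: boundedness of $f, g$ trivializes uniform integrability, and the $J_1$-continuity of $\omega \mapsto \int_0^t g(\omega(u))du$ is a standard consequence of bounded convergence.
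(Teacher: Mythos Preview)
Your proof is correct and follows essentially the same route as the paper's: apply Corollary~\ref{coro: main coro singleton} with the determining set of Example~\ref{ex: determining set}(i) over a dense $D$ avoiding both $\Gamma^c$ and the fixed discontinuity times of $X$, verify $P$-continuity of $Y^\circ_t$ and $Y^\circ_t Z^\circ_s$ via the standard $J_1$ facts, get uniform integrability from boundedness of $f,g$, and establish \eqref{eq: conv cond} by the martingale decomposition using \eqref{eq: EK UI2}--\eqref{eq: EK UI3}. One small cosmetic point: drop the $\cup\{0\}$ from your definition of $D$ --- it is unnecessary since $\Gamma \cap N^c$ is already dense, and if $0 \notin \Gamma$ your claimed inclusion $D \cap [0,s] \subseteq \Gamma \cap [0,t]$ fails; the paper simply takes $D = \{t \in \Gamma : P(\Delta X_t \neq 0) = 0\}$.
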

\begin{proof}
	We check (S1) -- (S3) in Corollary \ref{coro: main coro singleton}. Of course, (S1) holds by hypothesis. 
	Let \(\Z = \{\Z_t, t \in \mathbb{R}_+\}\) be as in part (i) of Example \ref{ex: determining set} with
	\[
	D \triangleq \big\{ t \in \Gamma \colon P (X_t \not = X_{t-}) = 0 \big\}.
	\]
	Since \(\Gamma^c\) is countable, \(D^c\) is countable (see \cite[Lemma 3.7.7]{EK}) and consequently, \(D\) is dense in \(\mathbb{R}_+\). As explained in Example \ref{ex: determining set}, \(\Z\) is a determining set for \(\Y\). 
	Thus, (S2) holds, too. Finally, we check (S3).  It is clear that \(\Y\) is canonical and that every \(Y^\circ_t\) is bounded, which implies that \(\{Y^\circ_r (X^n) \colon r \in D \cap [0, t], n \in \mathbb{N}\}\) is uniformly integrable.
	Moreover, as \(\omega \mapsto \omega (t)\) is continuous at \(\omega\) whenever \(\omega (t) = \omega(t-)\), for every \(t \in D\) any \(Z^\circ_t \in \Z_t\) is \(P\)-a.s. continuous at \(X\) by definition of \(D\).
	Similarly, again by definition of \(D\), for every \(t \in D\) the random variable \(Y^\circ_t\) is \(P\)-a.s. continuous at \(X\). It is left to verify the final part of (S3). Take \(Y \in \Y\) such that 
	\[
	Y = f (X) - f(X_0) - \int_0^\cdot g(X_s) ds,
	\]
	and set 
	\[
	Y^n \triangleq \xi^n - f(X^n_0) - \int_0^\cdot \phi^n_s ds,  
	\]
	where \(\xi^n\) and \(\phi^n\) are as in \eqref{eq: EK UI1}, \eqref{eq: EK UI2} and \eqref{eq: EK UI3}. Let \(s, t \in D \subset \Gamma\) with \(s < t\) and take \(Z^\circ_s \in \Z_s\). Clearly, we have 
	\begin{align*}
	(Y^\circ_t (X^n) - Y^n_t &+ Y^n_s - Y^\circ_s (X^n)) Z^\circ_s (X^n) \\&= (f(X^n_t) - \xi^n_t + \xi^n_s - f (X^n_s)) \prod_{i = 1}^k h_i(X^n_{t_i}) 
	\\&\hspace{2cm}+ \int_s^t(\phi^n_u - g(X^n_u)) du \prod_{i = 1}^k h_i (X^n_{t_i})
	\end{align*}
	for certain \(k \in \mathbb{N}, t_1, \dots, t_k \in \Gamma \cap [0, s], h_1, \dots, h_k \in C_b(E)\) related to \(Z^\circ_s\). 
	The \(P^n\)-expectation of the first term converges to zero by \eqref{eq: EK UI2}, and the \(P^n\)-expectation of the second term converges to zero by \eqref{eq: EK UI3}. 
	As \(Y^n\) is a martingale on \(\B^n\) we have
	\[
	E^{P^n} \big[ (Y^n_s - Y^n_t) Z^\circ_s(X^n)\big] = 0,
	\]
	and consequently,
	\begin{align*}
	\lim_{n \to \infty} E^{P^n} &\big[ (Y^\circ_t (X^n) - Y^\circ_s (X^n)) Z^\circ_s(X^n) \big] \\&= \lim_{n \to \infty} E^{P^n} \big[ (Y^\circ_t (X^n) - Y^n_t + Y^n_s - Y^\circ_s (X^n)) Z^\circ_s(X^n)\big] = 0.
	\end{align*}
	We conclude that (S3) holds. Hence, the claim follows from Corollary~\ref{coro: main coro singleton}.
\end{proof}
\begin{remark}
	In \cite[Theorem 4.8.10, (a\('\)) $\Rightarrow$ (c\('\))]{EK} it is shown that in case \(X^n \to X\) and \(P \in \cM (\Y)\), there exist processes \((\xi^n, \phi^n) \in \Y^n\) with the properties \eqref{eq: EK UI1}, \eqref{eq: EK UI2} and~\eqref{eq: EK UI3}. 
\end{remark}

In Section \ref{sec: EK FTD} below we derive a version of Theorem \ref{theo: EK restate} where the Lebesgue measure in \eqref{eq: main EK test processes} is replaced by a general locally finite measure which is allowed to have point masses. At this point we stress that the proof (and the result itself) requires substantial adjustments, as in this case the test processes have no Skorokhod \(J_1\) continuous canonical versions in general. More comments on this issue are given at the end of Section~\ref{sec: rec JS}.

\subsection{A Stability Result for Volterra Equations} \label{sec: rec VSDE}
In this section we discuss a stability result for Volterra SDEs (VSDEs) of the type
\begin{align} \label{eq: VSDE}
X_t = g_0 (t) + \int_0^t K_{t - s} d Z_s, \quad t \in \mathbb{R}_+,
\end{align}
where $X$ is an $\mathbb R^d$-valued predictable process and  \(Z\) is an $\mathbb R^k$-valued semimartingale with differential characteristics \((b (X), a(X), \nu (X))\), i.e. with semimartingale characteristics \((B^Z, C^Z, \nu^Z)\) of the form
\[
B^Z = \int_0^\cdot b(X_s) ds, \quad C^Z = \int_0^\cdot a (X_s) ds, \quad \nu^Z(dx, dt) = \nu (X_t, dx) dt,
\]
which we suppose to correspond to a fixed continuous truncation function \(h \colon \mathbb{R}^k \to \mathbb{R}^k\).
A version of Theorem \ref{theo: stability VSDE} below has recently been proven in \cite[Theorem 3.4]{jaber:hal-02279033}. The purpose of this section is to illustrate an application of Corollary \ref{coro: main coro singleton} beyond the classical continuous or \cadlag setting.

We now provide a precise definition for solutions to VSDEs and introduce its parameters. The space \(L^p_\textup{loc} (\mathbb{R}_+, \mathbb{R}^d)\), for \(p \geq 1\) and \(d \in \mathbb{N}\) endowed with the local \(L^p\)-norm topology, will serve as state space of the process \(X\) in \eqref{eq: VSDE}. Now, we introduce the following coefficients:
\begin{enumerate}
	\item[(D1)] An initial value \(g_0 \in L^p_\textup{loc} (\mathbb{R}_+, \mathbb{R}^d)\).
	\item[(D2)] A convolution kernel \(K \colon \mathbb{R}_+ \to \mathbb{R}^{d \times k}\) in \(L^p_\textup{loc} (\mathbb{R}_+, \mathbb{R}^d)\).
	\item[(D3)] A characteristic triplet \((b, a, \nu)\) consisting of two Borel functions \(b \colon \mathbb{R}^d \to \mathbb{R}^k\) and \(a \colon \mathbb{R}^d \to \mathbb{S}^k_+\) and a Borel transition kernel \(\nu\) from \(\mathbb{R}^d\) into \(\mathbb{R}^k\) which does not charge the set \(\{0\}\). Furthermore, we suppose that there exists a constant \(c > 0\) such that for all \(x \in \mathbb{R}^d\)
	\[
	\|b(x)\| + \|a(x)\| + \int (1\wedge\|y\|^2) \nu(x, dy) \leq c (1 + \|x\|^p). 
	\]
\end{enumerate}
We are in the position to define solutions to the VSDE \eqref{eq: VSDE}.
\begin{definition}
	A triplet \((\B, X, Z)\) is called a \emph{weak solution} to the \emph{Volterra SDE (VSDE)} associated to \((g_0, K, b, a, \nu)\), if \(\B\) is a stochastic basis which supports two processes \(X\) and \(Z\), where \(X\) is \(\mathbb{R}^d\)-valued, predictable and has paths in \(L^p_\textup{loc} (\mathbb{R}_+, \mathbb{R}^d)\), \(Z\) is an \(\mathbb{R}^k\)-valued \cadlag semimartingale with differential characteristics \((b(X), a(X), \nu(X))\), and \eqref{eq: VSDE} holds.
\end{definition}

Let \((g^n_0, K^n, b^n, a^n, \nu^n)\) and \((g_0, K, b, a, \nu)\) be coefficients for VSDEs. 
Moreover, for every \(f \in C^2_c (\mathbb{R}^k)\) and \((x, z) \in \mathbb{R}^d \times \mathbb{R}^k\) we set 
\begin{align*}
\mathcal{L} f (x, z) \triangleq \langle b(x)&, \nabla f (z)\rangle + \tfrac{1}{2}\operatorname{tr} ( a(x) \nabla^2 f(z))
\\&+ \int \big( f (z + y) - f(z) - \langle h (y), \nabla f (z)\rangle \big) \nu(x, dy).
\end{align*}
Similar to \(\mathcal{L}\), we define \(\mathcal{L}^n\) with \((b, a, \nu)\) replaced by \((b^n, a^n, \nu^n)\). Set \(F = L^p_\textup{loc} (\mathbb{R}_+, \mathbb{R}^d) \times \D(\mathbb{R}^k)\) endowed with the product topology, where \(\D(\mathbb{R}^k)\) is endowed with the Skorokhod \(J_1\) topology.

\begin{theorem} \label{theo: stability VSDE}
	Let \((\B^n, X^n, Z^n)\) be a weak solution to the VSDE \((g^n_0, K^n, b^n, a^n, \nu^n)\) for every \(n \in \mathbb{N}\), and let \((\Omega, \mathcal{F}, P)\) be a probability space which supports a measurable process \((X, Z)\) with paths in~\(F\). Set \(\mathcal{F}_t \triangleq \sigma (X_s, Z_s, s \leq t)\) for \(t \in \mathbb{R}_+\) and \(\B \triangleq (\Omega, \mathcal{F}, \bF \triangleq (\cF^P_{t+})_{t \geq 0}, P)\). Suppose that \(X\) is \(\F\)-progressively measurable, that \eqref{eq: X = liminf} holds and assume the following: 
	\begin{enumerate}
		\item[\textup{(L1)}] \((X^n,Z^n) \to (X, Z)\) weakly in \(F\).
		\item[\textup{(L2)}] For every \(f \in C^2_c (\mathbb{R}^k)\) there exists a constant \(c_f > 0\) such that 
		\[
		| \mathcal{L}^n f (x, z) | \leq c_f (1 + \|x\|^p ), \quad (n, x, z) \in \mathbb{N} \times \mathbb{R}^d \times \mathbb{R}^k.
		\]
		\item[\textup{(L3)}] \(\mathcal{L} f\) is continuous for every \(f \in C^2_c (\mathbb{R}^k)\).
		\item[\textup{(L4)}] \(g^n_0 \to g_0\) and \(K^n \to K\) in \(L^p_\textup{loc}(\mathbb{R}_+, \mathbb{R}^d)\), and 
		\begin{equation} \label{eq: last part L4}
		\begin{split}
		f \in C^2_c(\mathbb{R}^k),\ \mathbb{R}^{d + k} \ni (x_n&, z_n) \to (x, z) \in \mathbb{R}^{d + k} \\&\Longrightarrow \quad \big|\mathcal{L}^n f (x_n, z_n) - \mathcal{L} f (x_n, z_n) \big| \to 0.
		\end{split}
		\end{equation}
	\end{enumerate}
	Then, \((\B, X, Z)\) is a weak solution to the VSDE \((g_0, K, b, a, \nu)\).
\end{theorem}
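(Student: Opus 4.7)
The plan is to identify $(X, Z)$ via the martingale problem from Example \ref{ex: volterra}, which characterizes VSDE solutions through the two conditions that (i) the processes $f(\mathsf{Z}) - \int_0^\cdot \mathcal{L}f(\mathsf{X}_s, \mathsf{Z}_s)\, ds$ are local $(\bF, P)$-martingales for $f \in C^2_c(\mathbb{R}^k)$, and (ii) the integrated Volterra equation $\int_0^t X_s\, ds = \int_0^t g_0(s)\, ds + \int_0^t K_{t-s} Z_s\, ds$ holds $P$-a.s. Condition (ii) follows by passing to the weak limit in the corresponding identity for $(X^n, Z^n)$: the map sending $X \in L^p_\textup{loc}$ to $\int_0^t X_s\, ds$ is continuous, and for each fixed $t$ the map $(K, Z) \mapsto \int_0^t K_{t-s} Z_s\, ds$ is continuous using the convergence $K^n \to K$ in $L^p_\textup{loc}$ from (L4) together with local boundedness of $Z$ at Skorokhod continuity points, so (L1) and (L4) yield the identity in the limit.

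For (i), I would apply Corollary \ref{coro: main singleton approx} after a localization. Set $\Y = \{f(\mathsf{Z}) - \int_0^\cdot \mathcal{L}f(\mathsf{X}_s, \mathsf{Z}_s)\, ds : f \in C^2_c(\mathbb{R}^k)\}$ and take as approximating sequence the processes $Y^n = f(Z^n) - \int_0^\cdot \mathcal{L}^n f(X^n_s, Z^n_s)\, ds$, which are local $\bF^n$-martingales on $\B^n$ by the definition of the differential characteristics of $Z^n$ and Itô's formula. To handle local versus true martingality, I would apply the corollary to the processes stopped at $\sigma_R = \inf\{t : \int_0^t (1+\|\mathsf{X}_s\|^p)\, ds \geq R\}$, which are bounded by $2\|f\|_\infty + c_f R$ by (L2), hence uniformly integrable true martingales under $P^n$; then $P(\sigma_R \to \infty) = 1$ gives the local martingale property on $\B$.

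For the determining set I would combine the two cases of Example \ref{ex: determining set}: take $D = \{t \geq 0 : P(\Delta Z_t \neq 0) = 0\}$, which is dense since its complement is countable, and let $\Z_t$ consist of products of factors $h_i(\mathsf{Z}_{t_i})$ and $g_j\bigl(\int_0^{s_j}\mathsf{X}_u\, du\bigr)$ with $t_i, s_j \in D \cap [0,t]$ and $h_i, g_j$ bounded continuous. The assumption \eqref{eq: X = liminf} combined with the monotone class argument sketched in Example \ref{ex: determining set} shows that $\Z$ is determining. The $(Q^n, Q)$-continuity of the test processes reduces to two ingredients: Skorokhod continuity of $\omega_Z \mapsto f(\omega_Z(t))$ at $t \in D$, and continuity of the compensator integrals. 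For the latter, along a convergent sequence $(X^n, Z^n) \to (X, Z)$ in $F$, the $L^p_\textup{loc}$-convergence gives $X^n \to X$ Lebesgue-a.e. on $[0,t]$ along a subsequence, $Z^n_s \to Z_s$ at all but countably many $s$, and (L3) yields pointwise convergence of $\mathcal{L}f(X^n_s, Z^n_s)$, after which dominated convergence with the $\|x\|^p$-envelope from (L2) gives convergence of $\int_0^t \mathcal{L}f(X^n_s, Z^n_s)\, ds$.

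Finally, to pass from $\mathcal{L}^n f$ (appearing in $Y^n$) to $\mathcal{L}f$ (appearing in $\Y$), use \eqref{eq: last part L4}: along a sequence $(X^n_s, Z^n_s) \to (X_s, Z_s)$ a.e. in $s$, the uniform-on-compacta convergence $\mathcal{L}^n f - \mathcal{L}f \to 0$ combined with (L2) and dominated convergence shows that $Y^n_t - Y^\circ_t(X^n, Z^n) \to 0$ in $L^1(P^n)$ for each $t$, verifying the convergence hypothesis of Corollary \ref{coro: main singleton approx}. The main obstacle I anticipate is the interplay between the $L^p_\textup{loc}$-topology on $X$ and the Skorokhod topology on $Z$ when verifying $(Q^n,Q)$-continuity of the compensator: one has to construct the set $A$ of Definition \ref{def: continuous ws} carefully, ensuring it has $Q$-full measure and $\liminf Q^n(A) = 1$, typically by imposing a uniform integrability control on $\int_0^t \|X^n_s\|^p\, ds$ extracted from the weak convergence (L1), and by intersecting with the Skorokhod continuity set of $Z$ at times in $D$. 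The localization at $\sigma_R$ and the separate treatment of the Volterra identity keep these issues modular.
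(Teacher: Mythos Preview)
Your plan follows the paper's route closely: the paper also localizes with stopping times of the form $\inf\{t:\int_0^t(1+\|\X_s\|^p)\,ds\geq m\}$, builds the determining set by combining both parts of Example~\ref{ex: determining set}, and verifies the convergence condition via the martingale approximation $Y^n=f(Z^n)-\int_0^\cdot\mathcal L^nf(X^n_s,Z^n_s)\,ds$ (the paper invokes Corollary~\ref{coro: main coro singleton} together with Proposition~\ref{prop: mg approx}, which is equivalent to your use of Corollary~\ref{coro: main singleton approx}). Two minor points: your ``dominated convergence with the $\|x\|^p$-envelope'' is really Vitali's theorem, since the envelope $c_f(1+\|X^n_s\|^p)$ depends on $n$; what you use is that $L^p_{\rm loc}$-convergence of $X^n$ makes $\{\|X^n\|^p\}_n$ uniformly integrable on $[0,t]$.

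There is, however, one genuine gap. Once you stop at $\sigma_R$, the canonical test variable becomes
\[
Y^\circ_t(\X,\z)=f\bigl(\z_{t\wedge\sigma_R(\X)}\bigr)-\int_0^{t\wedge\sigma_R(\X)}\mathcal L f(\X_s,\z_s)\,ds,
\]
and your continuity discussion only covers the unstopped map $\z\mapsto f(\z_t)$ at $t\in D$. The stopping time $\sigma_R$ is indeed continuous in $\X$, but $\z\mapsto\z_{t\wedge\sigma_R(\X)}$ is discontinuous wherever $\z$ jumps at time $\sigma_R(\X)$, and nothing in your argument rules out $P(\Delta Z_{\sigma_R(X)}\neq 0)>0$. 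The paper fixes this by a randomization: it enlarges each space by an independent uniform variable $U$ (resp.\ $U^n$), replaces $\sigma_R$ by $T_m=\inf\{t:\int_0^t(1+\U+\|\X_s\|^p)\,ds\geq m\}$, and observes that the extra noise forces $P(\Delta Z_{T_m(U,X)}\neq 0)=0$. An alternative repair, closer to your outline, is to note that for each $\omega$ the map $R\mapsto\sigma_R(X(\omega))$ is a strictly increasing continuous bijection of $(0,\infty)$, so $\{R:\Delta Z_{\sigma_R(X)}(\omega)\neq 0\}$ is countable; by Fubini, $P(\Delta Z_{\sigma_R(X)}\neq 0)=0$ for Lebesgue-a.e.\ $R$, and you may then choose a sequence $R_m\nearrow\infty$ along which the continuity holds. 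Either device is needed; without it the $(Q^n,Q)$-continuity of the stopped test process is not established.
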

\begin{remark}
	\begin{enumerate}
		\item[\textup{(i)}]
		The assumption that \(X\) is \(\F\)-progressively measurable comes without loss of generality, see \cite[Proposition 1.1.12]{KaraShre}.
		\item[\textup{(ii)}]
		It is clear that \eqref{eq: last part L4} holds whenever \(\mathcal{L}^n f \to \mathcal{L} f\) locally uniformly for every \(f \in C^2_c (\mathbb{R}^k)\). This together with the continuity of each \(\mathcal{L}^n f\) is used in \cite{jaber:hal-02279033}.\footnote{In the statement of \cite[Theorems 1.6, 3.4]{jaber:hal-02279033} the coefficients are not assumed to be continuous, but this assumption is used in the proof, see \cite[Lemma 3.6]{jaber:hal-02279033}.} Of course, this already implies (L3). In Theorem \ref{theo: stability VSDE} we only ask for the weaker assumptions (L3) and \eqref{eq: last part L4}. Hence, Theorem \ref{theo: stability VSDE} can also be applied when \(\mathcal{L}^n\) has discontinuous coefficients.
	\end{enumerate}
\end{remark}
\begin{proof}
	Using standard extensions, we can assume that \(\B^n\) and \(\B\) support random variables \(U^n\) and \(U\) which are uniformly distributed on \([0, 1]\) such that the following hold: \(U^n\) is independent of \((X^n, Z^n)\), \(U\) is independent of \((X, Z)\) and \((U^n, X^n, Z^n) \to (U, X, Z)\) weakly as \(n \to \infty\). Furthermore, we can redefine \(\mathcal{F}^n_0 \triangleq \mathcal{F}^n_0 \vee \sigma (U^n)\) and \(\mathcal{F}_0 \triangleq \mathcal{F}_0 \vee \sigma (U)\).
	Let \((\U, \X, \z)\) be the identity map on \([0, 1] \times F\) and~set
	\[
	T_m \triangleq \inf \Big( t \in \mathbb{R}_+ \colon \int_0^t (1 + \U + \|\X_s\|^p) ds \geq m\Big), \quad m > 0.
	\]
	Moreover, let \(\Y^\circ\) be the set of all processes
	\[
	f (\z_{\cdot \wedge T_m}) - \int_0^{\cdot \wedge T_m} \mathcal{L} f (\X_s, \z_s) ds, \quad m > 0, f \in C_c^2(\mathbb{R}^k),
	\]
	and define \(\Y^n\) to be the set of all processes
	\[
	f (Z^n_{\cdot \wedge T_m (U^n, X^n)}) - \int_0^{\cdot \wedge T_m (U^n, X^n)} \mathcal{L} f (X^n_s, Z^n_s) ds, \quad m > 0, f \in C_c^2(\mathbb{R}^k).
	\]
	The following lemma is a direct consequence of  \cite[Lemma 3.3]{jaber:hal-02279033}.
	\begin{lemma} \label{lem: mc VSDE}
		\begin{enumerate}
			\item[\textup{(i)}] \((\B, X, Z)\) is a weak solution to the VSDE \((g_0, K, b, a, \nu)\), if for every \(Y^\circ \in \Y^\circ\) the process \(Y^\circ (U, X, Z)\) is a martingale, and 
			\begin{align} \label{eq: second part mc VSDE}
			\int_0^t X_s ds = \int_0^t g_0 (s) ds + \int_0^t K_{t-s} Z_s ds, \quad t \in \mathbb{R}_+. 
			\end{align}
			\item[\textup{(ii)}] All processes in \(\Y^n\) are martingales on \(\B^n\).
		\end{enumerate}
	\end{lemma}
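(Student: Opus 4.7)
The plan is to derive both assertions from the second formulation of the semimartingale problem in Example \ref{ex: SMP} (with $A_s = s$), combined with It\^o's formula and the observation that $T_m$ provides a natural localizing sequence.

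For part (ii), since $(\B^n, X^n, Z^n)$ is a weak solution, $Z^n$ is a semimartingale on $\B^n$ with differential characteristics $(b^n(X^n), a^n(X^n), \nu^n(X^n))$. Applying It\^o's formula (\cite[Theorem I.4.57]{JS}) to $f(Z^n)$ for $f \in C^2_c(\mathbb{R}^k)$ and expanding the jump sum via the compensated jump measure, one finds that
\[
M^n \triangleq f(Z^n) - f(Z^n_0) - \int_0^\cdot \mathcal{L}^n f(X^n_s, Z^n_s) \, ds
\]
is a local martingale on $\B^n$. Because $U^n$ is $\mathcal{F}^n_0$-measurable, $T_m(U^n, X^n)$ is an $\mathbf{F}^n$-stopping time, and condition (L2) together with the very definition of $T_m$ gives
\[
\Big| \int_0^{\cdot \wedge T_m(U^n, X^n)} \mathcal{L}^n f(X^n_s, Z^n_s) \, ds \Big| \leq c_f \int_0^{T_m(U^n, X^n)} (1 + \|X^n_s\|^p) \, ds \leq c_f \, m,
\]
while $f(Z^n_{\cdot \wedge T_m(U^n, X^n)})$ is bounded because $f$ has compact support. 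Hence $(M^n)^{T_m(U^n, X^n)}$ is a bounded local martingale and therefore a genuine martingale, which is precisely (ii).

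For part (i), the Volterra identity \eqref{eq: second part mc VSDE} is part of the hypothesis, so only the semimartingale characterization of $Z$ on $\B$ must be established. The martingale hypothesis says that, for every $m > 0$ and $f \in C^2_c(\mathbb{R}^k)$,
\[
f(Z_{\cdot \wedge T_m}) - \int_0^{\cdot \wedge T_m} \mathcal{L} f(X_s, Z_s) \, ds
\]
is a $P$-martingale; note that $T_m$ is an $\mathbf{F}$-stopping time after the enlargement $\mathcal{F}_0 \vee \sigma(U)$ arranged before the lemma. This is exactly the martingale identity from the second formulation of Example \ref{ex: SMP} applied to the stopped process $Z^{T_m}$, so that $Z^{T_m}$ is a semimartingale with differential characteristics $(b(X), a(X), \nu(X))$ restricted to $[0, T_m]$. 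Since $X \in L^p_\textup{loc}(\mathbb{R}_+, \mathbb{R}^d)$ $P$-a.s., we have $T_m \nearrow \infty$, and letting $m \to \infty$ identifies $Z$ itself as the desired semimartingale.

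The main obstacle is bridging the gap between the test class $C^2_c(\mathbb{R}^k)$ used in $\Y^\circ$ and the class $C^2_b(\mathbb{R}^k)$ formally required by the second formulation of Example \ref{ex: SMP}. The key input here is the growth bound in (D3): on each interval $[0, T_m]$ one has $\int_0^{T_m} (1 + \|X_s\|^p) \, ds \leq m$, so all relevant integrals against $b(X)$, $a(X)$ and $\int (1 \wedge \|y\|^2) \, \nu(X, dy)$ are uniformly bounded up to $T_m$. A standard truncation argument, multiplying each $f \in C^2_b$ by a sequence of $C^2_c$ cutoffs and passing to the limit in the martingale identity by dominated convergence, then extends the characterization from $C^2_c$ to $C^2_b$ and completes the reduction to Example \ref{ex: SMP}.
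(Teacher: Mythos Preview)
Your argument is essentially correct. The paper itself does not give a proof but simply records the lemma as ``a direct consequence of \cite[Lemma~3.3]{jaber:hal-02279033}''; what you have written is a reasonable sketch of how that cited result is proved, via It\^o's formula for part~(ii) and the local martingale characterization of semimartingale characteristics (Example~\ref{ex: SMP}/Example~\ref{ex: volterra}) together with $T_m\nearrow\infty$ for part~(i). One small point: in part~(ii) the processes in $\Y^n$ do not subtract $f(Z^n_0)$, but as this is $\mathcal{F}^n_0$-measurable it is irrelevant for the martingale property; and your closing of the $C^2_c$ versus $C^2_b$ gap by cutoff and dominated convergence on $[0,T_m]$ is the standard route.
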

	Equation \eqref{eq: second part mc VSDE} follows from (L1) and (L4), see \cite[Lemma 3.5]{jaber:hal-02279033} for details. Thus, to conclude the claim of the theorem, it suffices to show that all processes in \(\Y \triangleq \{Y^\circ (U, X, Z) \colon Y^\circ \in \Y^\circ\}\) are martingales. To show this we use Corollary \ref{coro: main coro singleton}.

	First of all, note that (S1) in Corollary \ref{coro: main coro singleton} coincides with (L1). Thus, we only need to verify (S2) and (S3), where we take
	\[
	D \triangleq \big\{t > 0 \colon P (\Delta Z_t \not = 0) = 0 \big\}.
	\]
	It is well-known that \(D\) is dense in \(\mathbb{R}_+\) (see \cite[Lemma 3.7.7]{EK}). 
	
	For \(t \in \mathbb{R}_+\), define \(\Z_t\) to be the set of functions 
	\[
	\prod_{i = 1}^m h_i \Big(\U, \int_0^{t_i} \X_s ds, \z_{t_i} \Big), 
	\]
	where \(m \in \mathbb{N}, t_1, \dots, t_m \in D \cap [0, t]\) and \(h_1, \dots, h_m \in C_b (\mathbb{R}^{1 + d + k})\). Recalling both parts of Example \ref{ex: determining set}, we note that \(\Z = \{\Z_t, t \in \mathbb{R}_+\}\) is a determining set for \(\Y\). Thus, (S2) holds.
	
	Finally, we verify (S3). 
	It is obvious by definition that \(\Y^\circ\) is canonical and, by (D3) and the definition of \(T_m\), all processes in \(\Y^\circ\) are bounded on finite time intervals. 
	Furthermore, by the definition of \(D\), for every \(t \in \mathbb{R}_+\) all elements of \(\Z_t\) are \(P\)-a.s. continuous at \((U, X, Z)\).
	Take \(t \in D\) and \(Y^\circ \in \Y^\circ\) such that
	\[
	Y^\circ_t = f (\z_{t \wedge T_m}) - \int_0^{t \wedge T_m} \mathcal{L} f (\X_s, \z_s) ds.
	\]
	Note that
	\begin{align*}
	\{T_m > s\} &= \Big\{ \int_0^s (1 + \U +\|\X_u\|^p) du < m \Big\}, \\
	\{T_m < s\} &= \Big\{ \int_0^s (1 + \U+ \|\X_u\|^p) du > m \Big\},
	\end{align*}
	because \(s \mapsto \int_0^s (1 + \U + \|\X_u\|^p)du\) is strictly increasing.
	Hence, using the continuity of \((\U, \X) \mapsto \int_0^s (1 + \U + \|\X_u\|^p)du\), the map \(T_m\) is upper and lower semicontinuous and consequently, continuous. 
	With this observation at hand, it follows easily from the continuity of \(\mathcal{L} f\) that \[(\U, \X, \z) \mapsto \int_0^{t \wedge T_m (\U, \X)} \mathcal{L} f (\X_s, \z_s) ds\] is continuous, too. 
	Thanks to the randomization given by \(U\), a.s. \(Z\) does not jump at time \(T_m (U, X)\), see \cite[p. 17]{jaber:hal-02279033} for details. 
	Thus, \((\U, \X, \z) \mapsto \z_{t \wedge T_m (\U, \X)}\) is \(P\)-continuous at \((U, X, Z)\) for every \(t \in D\), by the definition of the set \(D\) and \cite[Proposition VI.2.1]{JS}.
	In summary, \(Y^\circ_t\) and \(Y^\circ_t Z^\circ_s\) for \(Z^\circ_s \in \Z_s\) and \(s \leq t\) are \(P\)-continuous at \((U, X, Z)\) for every~\(t \in D\).

	It is left to verify the final part of (S3).
	Let \(Y^n \in \Y^n\) be given by
	\[
	f (Z^n_{\cdot \wedge T_m (U^n, X^n)}) - \int_0^{\cdot \wedge T_m (U^n, X^n)} \mathcal{L}^n f (X^n_s, Z^n_s) ds.
	\]
	Thanks to part (ii) of Lemma \ref{lem: mc VSDE}, \(Y^n\) is a martingale on \(\B^n\). 
	By Skorokhod's coupling theorem (\cite[Theorem 3.30]{kallenberg}), we can and will assume that \((U^n, X^n, Z^n)\) and \((U, X, Z)\) are defined on the same probability space and that a.s. \((U^n, X^n, Z^n) \to (U, X, Z)\) as \(n \to \infty\).
	We now show that 
	\begin{align} \label{conv VSDE to show}
	E \big[ \big|Y^n_t - Y^\circ_t (U^n, X^n, Z^n)\big| \big] \to 0 \text{ as } n \to \infty,
	\end{align}
	i.e. \eqref{eq: L1 cond} in Proposition \ref{prop: mg approx}. This implies the last part in (S3) and thereby completes the proof.
	We see that 
	\[
	Y^n_t - Y^\circ_t (U^n, X^n, Z^n) = \int_0^{t \wedge T_m (U^n, X^n)} (\mathcal{L}^n f (X^n_s, Z^n_s) - \mathcal{L} f (X^n_s, Z^n_s)) ds.
	\]
	The implication \eqref{eq: last part L4} yields that a.s. for a.a. \(s \in [0, t]\)
	\[
	\big|\mathcal{L}^n f (X^n_s, Z^n_s) - \mathcal{L} f (X^n_s, Z^n_s)\big|  \to 0 \text{ as } n \to \infty.
	\]
	As a.s. \(X^n \to X\) in \(L^p_\textup{loc}(\mathbb{R}_+, \mathbb{R}^k)\), the family \(\{\|X^n\|^p\colon n \in \mathbb{N}\}\) restricted to any finite time interval is a.s. uniformly integrable w.r.t. the Lebesgue measure.
	Thus, using (D3) and (L2), we deduce from Vitali's theorem that a.s.
	\[
	\int_0^{t} \big| \mathcal{L}^n f (X^n_s, Z^n_s) - \mathcal{L} f (X^n_s, Z^n_s) \big| ds \to 0 \text{ as } n \to \infty.
	\]
	Finally, since
	\[
	\int_0^{t \wedge T_m (U^n, X^n)} \big| \mathcal{L}^n f (X^n_s, Z^n_s) - \mathcal{L} f (X^n_s, Z^n_s) \big| ds \lesssim 1 + m
	\]
	by (D3), (L2) and the definition of \(T_m\), the dominated convergence theorem yields \eqref{conv VSDE to show}. The proof is complete.
\end{proof}

\subsection{An Extension of a Theorem by Jacod and Shiryaev} \label{sec: rec JS}
Let \((B, C, \nu)\) be a candidate triplet for semimartingale characteristics defined on the canonical space \(\D(\mathbb{R}^d)\) endowed with the Skorokhod \(J_1\) topology, see \cite[III.2.3]{JS} for the technical requirements. Here, we assume that \((B, C, \nu)\) corresponds to a continuous truncation function \(h \colon \mathbb{R}^d \to \mathbb{R}^d\).

For \(m > 0\) we set 
\begin{equation} \label{eq: def T, Theta} \begin{split}
T_m (\omega) &\triangleq \inf (t \in \mathbb{R}_+ \colon \|\omega (t)\| \geq m \text{ or } \|\omega (t-)\| \geq m),\quad \omega \in \D(\mathbb{R}^d),
\end{split}
\end{equation}
\begin{align*}
\Theta_{m, t} &\triangleq \Big\{ \omega \in \D(\mathbb{R}^d) \colon \sup_{s \leq t} \|\omega(s)\| \leq m\Big\}.
\end{align*}
Let \(C_1 (\mathbb{R}^d)\) be a subset of the set of non-negative bounded continuous functions vanishing around the origin as described in \cite[VII.2.7]{JS}.

The following theorem generalizes \cite[Theorem IX.2.11]{JS} for the quasi-left continuous case as outlined on p. 533 in \cite{JS}.
\begin{theorem}\label{theo: JS generalization}
	Let \((\Omega, \mathcal{F}, \bF, P)\) and \((\Omega^n, \cF^n, \bF^n, P^n)\) be filtered probability spaces which support \(\mathbb{R}^d\)-valued c\`adl\`ag adapted processes \(X\) and \(X^n\) such that each \(X^n\) is a semimartingale with semimartingale characteristics \((B^n, C^n, \nu^n)\) corresponding to the (continuous) truncation function~\(h\). Assume that \(X^n \to X\) weakly on \(\D(\mathbb{R}^d)\) and that the following hold:
	\begin{enumerate}
		\item[\textup{(i)}] There exists a set \(\Gamma \subset \mathbb{R}_+\) with countable complement such that for every \(t \in \Gamma\), \(m, \varepsilon > 0\) and \(g \in C_1(\mathbb{R}^d)\) we have 
		\begin{align*}
		P^n (\|B^n_{t \wedge T_m (X^n)}  - B_{t \wedge T_m (X^n)} (X^n)\| \geq \varepsilon) &\to 0,\\ P^n (\|\widetilde{C}^n_{t \wedge T_m (X^n)}  - \widetilde{C}_{t \wedge T_m (X^n)} (X^n)\| \geq \varepsilon) &\to 0,\\ P^n(|g * \nu^n_{t \wedge T_m (X^n)}  - g * \nu_{t \wedge T_m (X^n)} (X^n)| \geq \varepsilon) &\to 0,
		\end{align*}
		as \(n \to \infty.\)
		\item[\textup{(ii)}]
		For all \(t \in \mathbb{R}_+, m > 0\) and \(g \in C_1(\mathbb{R}^d)\) we have 
		\begin{align}\label{eq: bounded JS gen}
		\sup_{\omega \in \Theta_{m, t}} \big(\| \widetilde{C}_t (\omega)\| + | (g * \nu_t)(\omega) | \big) < \infty.
		\end{align}
		\item[\textup{(iii)}]	There exists a dense set \(\Gamma^* \subset \mathbb{R}_+\) such that for all \(t \in \Gamma^*\) and \(g \in C_1(\mathbb{R}^d)\) the maps 
		\[
		\D(\mathbb{R}^d) \ni \omega  \mapsto B_t (\omega), \widetilde{C}_t (\omega), (g * \nu_t) (\omega)
		\]
		are Skorokhod \(J_1\) continuous.
		\item[\textup{(iv)}]
		For every \(m > 0\) there exists a continuous increasing function \(F^m \colon \mathbb{R}_+ \to \mathbb{R}_+\) such that the processes
		\[
		F^m - \sum_{i = 1}^d \operatorname{Var} (B^{(i)}_{\cdot \wedge T_m}); \quad F^m - \Big(\sum_{i = 1}^d C^{(ii)}_{\cdot \wedge T_m} + (\|x\|^2 \wedge 1) * \nu_{\cdot \wedge T_m} \Big)
		\]
		are increasing.
	\end{enumerate}
	Then, \(X\) is a semimartingale with semimartingale characteristics \((B(X), C(X), \nu(X))\).
\end{theorem}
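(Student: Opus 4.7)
The plan is to localize the problem and apply Corollary~\ref{coro: main singleton approx} at the stopped level. Specifically, for a dense set $M \subset \mathbb{R}_+$ of ``good'' levels $m$, I would show that $X^{\cdot \wedge T_m(X)}$ is a semimartingale with characteristics equal to the stopped versions of $(B(X), C(X), \nu(X))$. Since $T_m(X) \to \infty$ $P$-a.s.\ as $m \to \infty$ (the paths of $X$ are locally bounded by c\`adl\`ag-ness), standard localization then yields that $X$ itself is a semimartingale with the claimed characteristics.

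For fixed $m \in M$, by Example~\ref{ex: SMP} it suffices to identify the processes
\[
M(h)^{(i)}_{\cdot \wedge T_m}, \qquad (M(h)^{(i)} M(h)^{(j)} - \widetilde{C}^{(ij)})_{\cdot \wedge T_m}, \qquad \Bigl(\sum\nolimits_{s \leq \cdot} g(\Delta \X_s) - g * \nu_\cdot\Bigr)_{\cdot \wedge T_m}
\]
with $g \in C_1(\mathbb{R}^d)$ as local martingales under $P$. These serve as the canonical versions $Y^\circ$ in Corollary~\ref{coro: main singleton approx}. The approximating martingales $Y^n$ on $\B^n$ are the analogous processes built from $(B^n, \widetilde{C}^n, g * \nu^n)$ and stopped at $T_m(X^n)$; each is a local martingale by the semimartingale MP for $X^n$ and becomes a true martingale after stopping, since conditions~(ii) and~(iv), together with (i), transfer the deterministic $F^m$-domination of $(B, \widetilde{C}, g * \nu)$ to $(B^n, \widetilde{C}^n, g * \nu^n)$ at the stopped level in a sufficiently strong sense, yielding $L^2$-bounds via Doob's inequality.

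To verify the hypotheses of Corollary~\ref{coro: main singleton approx} I would take the dense set of times to be $D = \Gamma \cap \Gamma^* \cap \{t > 0 : P(X_t \neq X_{t-}) = 0\}$, whose complement is countable, together with the determining set from Example~\ref{ex: determining set}(i). The good levels $M$ are those $m > 0$ such that $P$-almost surely $X$ neither attains $m$ continuously nor jumps at the exit time $T_m(X)$; classical arguments for first-exit times imply that $M^c$ is at most countable and that for $m \in M$ the map $T_m$ is $P$-a.s.\ Skorokhod $J_1$ continuous at $X$. The $P$-continuity of the canonical test processes at $X$ then follows by combining (iii) (continuity of $B_t, \widetilde{C}_t, g * \nu_t$ at $X$ for $t \in \Gamma^*$), the Skorokhod continuity of $\omega \mapsto \omega(t)$ and $\omega \mapsto \sum_{s \leq t} g(\Delta \omega_s)$ at fixed continuity times of $X$, and the continuity of $T_m$ for $m \in M$. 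The convergence \eqref{eq: main conv cond mart approx} between $Y^n_t$ and $Y^\circ_t(X^n)$ reduces precisely to (i), and the required uniform integrability follows from the $L^2$-bounds above.

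The main obstacle is producing the $P$-continuity of the canonical stopped processes in a unified way: the stopping time $T_m$ and the functionals $B_t, \widetilde{C}_t, g * \nu_t$ are each continuous at $X$ only under their respective hypotheses, and matching them demands simultaneously tuning $m \in M$ (so that $T_m$ is continuous at $X$ and $X$ does not jump at the exit) and $t \in D$ (so that evaluations and jump sums are continuous). The jump-counting functional $\sum_{s \leq \cdot \wedge T_m} g(\Delta \X_s)$ is the most delicate ingredient, since its Skorokhod continuity is fragile; here the fact that $g \in C_1(\mathbb{R}^d)$ vanishes near the origin and the careful choice of $m$ are essential. Once continuity is in place, the rest is a routine application of Corollary~\ref{coro: main singleton approx} followed by the standard localization procedure.
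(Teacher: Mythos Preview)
Your overall strategy---localize via the first exit times $T_m$, apply Corollary~\ref{coro: main singleton approx} to the three families of test processes from Example~\ref{ex: SMP}, and then let $m \to \infty$---is exactly the route the paper takes. The choice of the dense time set $D$ and the determining set from Example~\ref{ex: determining set}(i) are also correct.

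There is, however, a genuine gap in your uniform integrability argument. You write that stopping $Y^n$ at $T_m(X^n)$ already yields a true martingale with the required $L^2$-bounds because ``conditions~(ii) and~(iv), together with~(i), transfer the deterministic $F^m$-domination of $(B, \widetilde{C}, g * \nu)$ to $(B^n, \widetilde{C}^n, g * \nu^n)$ at the stopped level in a sufficiently strong sense.'' But hypotheses~(ii) and~(iv) bound only the \emph{limiting} characteristics, and hypothesis~(i) gives merely convergence in probability of $\widetilde{C}^n_{t \wedge T_m(X^n)}$ to the bounded quantity $\widetilde{C}_{t \wedge T_m(X^n)}(X^n)$. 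Convergence in probability to a bounded random variable does not produce uniform moment bounds on the approximating sequence, which is what Doob's inequality needs. The paper closes this gap by introducing a \emph{second} stopping time
\[
S^n \triangleq \inf\bigl( t \in \mathbb{R}_+ \colon \|\widetilde{C}^n_{t \wedge T_m(X^n)}\| \geq K + 1 \bigr),
\]
where $K$ is the deterministic bound on $\widetilde{C}_{T \wedge T_m}$ supplied by~(ii). Stopping additionally at $S^n$ forces $\widetilde{C}^n_{\cdot \wedge T_m(X^n) \wedge S^n}$ to be bounded (by $K+1$ plus a jump of size at most $2\|h\|_\infty^2$), so Doob's inequality applies. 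One then uses~(i) to show $P^n(S^n \leq T) \to 0$, so this extra stopping is asymptotically harmless and the convergence \eqref{eq: main conv cond mart approx} still goes through.

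A secondary point: you invoke~(iv) only for the integrability transfer, but in the paper its primary role is in the \emph{continuity} step. Hypothesis~(iii) gives Skorokhod continuity of $B_t, \widetilde{C}_t, g*\nu_t$ only for $t$ in the dense set $\Gamma^*$; since the random time $T_m(\omega)$ need not lie in $\Gamma^*$, one cannot simply compose. The majorization by the continuous increasing $F^m$ in~(iv) is what lets one interpolate, via \cite[IX.3.42]{JS}, to conclude that $\omega \mapsto B_{t \wedge T_m(\omega)}(\omega)$ is $P$-a.s.\ continuous at $X$ for every $t$. Your sketch of the continuity argument omits this mechanism.
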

\begin{proof} 
	We deduce the result from Corollary \ref{coro: main singleton approx}, applied with a localized version of \(\Y\) as defined in Example \ref{ex: SMP}, see (i) -- (iii) in Example \ref{ex: SMP}. In the following we will verify (S4) in Corollary \ref{coro: main singleton approx} for a localized version of the processes in (i). The argument for the processes from (ii) can be found in the proof of Theorem \ref{theo: gen JS local uniform topology 2} below. For the processes in (iii) the argument is similar as for those in (i), see the proof of \cite[Theorem IX.2.11]{JS} for some details.
	
	Due to \cite[Propositions VI.2.11, VI.2.12]{JS} and the arguments in the proof of \cite[Proposition~IX.1.17]{JS}, there exists an increasing sequence \((k_n)_{n \in \mathbb{N}} \subset \mathbb{R}_+\) with \(k_n \to \infty\) such that the maps
	\[
	\D(\mathbb{R}^d) \ni \omega \mapsto T^n (\omega) \triangleq T_{k_n} (\omega), \omega (\cdot \wedge T^n (\omega))
	\]
	are \(P\)-a.s. Skorokhod \(J_1\) continuous at \(X\). 
	Let \(\Gamma\) be as in (i) and set 
	\begin{align}\label{eq: D JS gen}
	D \triangleq \big\{t \in \Gamma \colon P(\Delta X^{T^m (X)}_t \not = 0) = 0 \text{ for all } m \in \mathbb{N}\big\}, \quad X^{T^m (X)} \triangleq X_{\cdot \wedge T^m (X)}.
	\end{align}
	As \(D^c\) is countable, \(D\) is dense in \(\mathbb{R}_+.\)

	Fix \(m \in \mathbb{N}, T \in D\) and let \(K = K(m, T) > 0\) be such that 
	\begin{align}\label{eq: loc bound in pr}
	\sup_{\omega \in \Theta_{k_m, T}} \|\widetilde{C}_T (\omega)\| \leq K,
	\end{align}
	see hypothesis (ii).
	We define 
	\[
	S^n \triangleq \inf (t \in \mathbb{R}_+ \colon \|\widetilde{C}^n_{t \wedge T^m (X^n)}\| \geq K + 1).
	\]
	Let us recall that 
	\[
	\omega (h) \triangleq \omega - \sum_{s \leq \cdot} (\Delta \omega (s) - h (\Delta \omega (s))), \quad \omega \in \D(\mathbb{R}^d), 
	\]
	where \(h\) is the continuous truncation function we have fixed in the beginning of this section.
	We take
	\begin{align} \label{eq: Y JS theo}
	Y^\circ (\omega) \triangleq \omega (h)_{\cdot \wedge T^m (\omega) \wedge T} - \omega (0) - B_{\cdot \wedge T^m (\omega) \wedge T} (\omega),
	\end{align}
	and 
	\[
	Y^n \triangleq X^n (h)_{\cdot \wedge T^m (X^n) \wedge S^n \wedge T} - X^n_0 - B^n_{\cdot \wedge T^m (X^n) \wedge S^n \wedge T}.
	\]
	Recalling \eqref{eq: loc bound in pr}, thanks to hypothesis (i) we obtain
	\begin{align*}
	P^n (S^n \leq T) &= P^n(\|\widetilde{C}^n_{T \wedge T^m (X^n)}\| \geq K + 1) \\&\leq P^n(\|\widetilde{C}^n_{T \wedge T^m (X^n)} - \widetilde{C}_{T \wedge T^m(X^n)} (X^n)\| \geq 1) \to 0
	\end{align*}
	as \(n \to \infty\). Thus, using (i) again, for all \(t \in \Gamma\) and \(\varepsilon > 0\) we also get
	\begin{align*}
	P^n (\|Y^\circ_t (X^n) - Y^n_t\| \geq \varepsilon) \leq P^n(\|B^n_{t \wedge T \wedge T_m (X^n)} - B_{t \wedge T \wedge T_m(X^n)}&(X^n)\| \geq \varepsilon) \\&+ P^n(S^n \leq T)
	\to 0
	\end{align*}
	as \(n \to \infty\).
	Due to hypothesis (iii) and (iv) and the \(P\)-a.s. Skorokhod \(J_1\) continuity of \(\omega \mapsto T^m(\omega)\) at \(X\), it follows from \cite[IX.3.42]{JS} that for every \(t \in \mathbb{R}_+\) the map \(\omega \mapsto B_{t \wedge T^m (\omega)} (\omega)\) is \(P\)-a.s. Skorokhod \(J_1\) continuous at \(X\). 
	Moreover, whenever \(t \in D\), \cite[VI.2.3, Corollary~VI.2.8]{JS} show that the map \(\omega \mapsto \omega (h)_{t \wedge T^m (\omega)}\) is \(P\)-a.s. Skorokhod \(J_1\) continuous at \(X\).
	Consequently, also \(\omega \mapsto Y^\circ_t(\omega)\) is \(P\)-a.s. Skorokhod \(J_1\) continuous at \(X\) for every~\(t \in D\). 
	
	By the martingale problem for semimartingales (see Example \ref{ex: SMP} or \cite[Theorem~II.2.21]{JS}), \(Y^n\) is a locally square-integrable \(P^n\)-martingale whose predictable quadratic variation process is given by \(\widetilde{C}^n_{\cdot \wedge T^m (X^n) \wedge S^m \wedge T}\). Hence, it follows from Doob's inequality (\cite[Theorem I.1.43]{JS}) that for all~\(a > 0\)
	\begin{align}\label{eq: Doob}
	E^{P^n} \Big[ \sup_{s \leq a} |Y^{n, (i)}_s|^2 \Big] \leq 4 E^{P^n} \Big[ \widetilde{C}^{n,  (ii)}_{a \wedge T^m (X^n) \wedge S^m \wedge T} \Big].
	\end{align}
	As \(
	|\Delta \widetilde{C}^{n, (ij)}| \leq 2 \|h\|_\infty^2,
	\)
	the definition of \(S^m\) yields that the r.h.s. of \eqref{eq: Doob} is bounded uniformly in~\(n\). Consequently, \(\{Y^n_t \colon t \in [0, a], n \in \mathbb{N}\}\) is uniformly integrable.
	
	In summary, \(Y^\circ\) and \((Y^n)_{n \in \mathbb{N}}\) have the properties as in (S4). 
	As mentioned at the beginning of this proof, similar arguments work for suitably localized versions of the processes defined in (ii) and (iii) of Example \ref{ex: SMP}. We omit the remaining details.
\end{proof}

The third part of hypothesis (i) plus hypothesis (iv) yield quasi-left continuity of the limit, see the proof of \cite[Theorem IX.3.21]{JS}. In \cite[Theorem IX.2.11]{JS} this assumption is not needed, but it is assumed that part (iii) holds \(P\)-a.s. at \(X\). Although the dependence on \(P\) is sort of minimal, one can only benefit from it when the limit is more or less known, see \cite[Remark IX.2.13]{JS}. The monograph \cite{JS} suggests two deterministic versions: A version of (iii) (\cite[IX.2.14]{JS}) and \cite[IX.2.16]{JS}, i.e. continuity of \(\omega \mapsto B^i(\omega), \widetilde{C}^{ij}(\omega), (g * \nu) (\omega)\) from \(\D(\mathbb{R}^d)\) into \(\D(\mathbb{R})\).
As functions of the type
\begin{align}\label{eq: func ftd}
\omega \mapsto \int_0^t f (\omega (s-)) q(ds)
\end{align}
are not necessarily continuous in the Skorokhod \(J_1\) topology when \(q\) is allowed to have point masses, both of these assumptions might be too stringent for applications with fixed times of discontinuity. 
To give an example, consider \(d = 1\) and 
\[
\omega \mapsto F_t(\omega) \triangleq \int_0^t \omega (s-) \delta_1 (ds),\quad \omega \in \D(\mathbb{R}).
\]
The function \(\omega \mapsto F_t (\omega)\) is obviously continuous if \(t < 1\), but it is discontinuous for all \(t \geq 1\) as is easily seen by taking \(\omega_n = \1_{[1 - 1/n, \infty)} \to \omega = \1_{[1, \infty)}\). Thus, for this example there is no dense set \(\Gamma \subset \mathbb{R}_+\) such that \(F_t\) is continuous for all \(t \in \Gamma\). Moreover, \(\omega \mapsto F(\omega)\) is also not continuous from \(\D(\mathbb{R})\) into \(\D(\mathbb{R})\). Indeed, if \(F\) would be continuous we must have \(\omega_n \to \omega \Rightarrow F_t (\omega_n) \to F_t(\omega)\) for all \(t \not = 1\) as \(\{s > 0 \colon \Delta F_s( \omega ) \not = 0\} \subset \{1\}\), which is not true. Therefore, we note that functions of the type \eqref{eq: func ftd} do not necessarily have the continuity properties from \cite[IX.2.14, IX.2.16]{JS}.

In Section \ref{sec: FTD main JS gen} below we discuss versions of Theorem \ref{theo: JS generalization} where in (iii) the Skorokod \(J_1\) topology is replaced by the local uniform topology, which seems to us more suitable for applications to semimartingales with fixed times of discontinuity.

\section{Stability Results for Processes with Fixed Times of Discontinuity} \label{sec: stab semi ftd}

In this section we establish stability results which are tailored to processes with fixed times of discontinuity. To be more precise, in Section \ref{sec: EK FTD} we derive a version of Theorem~\ref{theo: EK restate} which applies to test processes of the type
\[
f (X) - f(X_0) - \int_0^\cdot g (X_{s-}) q(ds), 
\]
where \(q\) is a locally finite Borel measure on \(\mathbb{R}_+\) which is allowed to have point masses. Moreover, in Section \ref{sec: FTD main JS gen} we prove a version of Theorem \ref{theo: JS generalization} for semimartingales whose characteristics are only assumed to be continuous in the local uniform instead of the Skorokhod \(J_1\) topology. In both cases we introduce \emph{control variables} to work with the notion of weak-strong convergence. Before we present our results, we motivate the presence of fixed times of discontinuities.
\subsection{Motivation} \label{sec: motivation}
Continuous state branching processes (CSBP) are analogues of Galton--Watson processes in continuous time with continuous state spaces. Typically, CSBP are modeled as strong solutions to SDEs driven by a Brownian motion and a Poisson random measure. More recently, there is an increasing interest in CSBPs in random environments (CSBPRE), where the random environments are modeled by additional independent, multiplicative and (sometimes) discontinuous noise, see, e.g. \cite{10.1214/EJP.v18-2774,10.1214/EJP.v20-3812}. 
Leaving the environment random is often called the \emph{annealed} perspective. In contrast, fixing the random environment corresponds to the so-called \emph{quenched} perspective. In case the environment is represented by discontinuous noise, taking a specific path introduces fixed times of discontinuity, which therefore arise in a natural manner in the context of CSBPRE. To the best of our knowledge, the literature contains only selected stability results for quenched dynamics of CSBPRE, see, e.g.~\cite{10.1214/16-ECP6,10.1214/EJP.v20-3812}.

Fixed times of discontinuity also occur naturally in mathematical finance such as in interest rate markets in the post-crisis environment. Indeed, a closer look on historical data of European reference interest rates (see \cite[Figure 1]{Fon2020}) shows jumps at pre-scheduled dates. As a consequence, the financial literature shows an increasing interest in stochastic models for interest rates which allow for fixed times of discontinuity, see, e.g. \cite{Fon2020,10.1214/19-AAP1483}.

\subsection{A Version of the Ethier--Kurtz Theorem with Fixed Times of Discontinuity} \label{sec: EK FTD}
In this section we derive a version of Theorem \ref{theo: EK restate} which allows fixed times of discontinuity.
Let \((E, r)\) be a Polish space, let \(\B = (\Omega, \mathcal{F}, \F, P)\) and \(\B^n = (\Omega^n, \mathcal{F}^n, \F^n, P^n)\), \(n \in \mathbb{N}\), be filtered probability spaces which support \(E\)-valued \cadlag adapted processes \(X\) and \(X^n\), respectively. Moreover, suppose that the filtration \(\F\) is generated by \(X\).
Let \(D \subset C_b (E) \times C_b (E)\) and let \(q, q^1, q^2, \dots\) be  locally finite measures\footnote{A measure \(p\) on \((\mathbb{R}_+, \mathcal{B}(\mathbb{R}_+))\) is called \emph{locally finite} if \(p(K) < \infty\) for every compact set \(K \subset \mathbb{R}_+\). Equivalently, \(p\) is locally finite if \(p([0, t]) < \infty\) for all \(t > 0\).} on \((\mathbb{R}_+, \mathcal{B}(\mathbb{R}_+))\).
Let \(\Y\) be the set of the following processes:
\begin{align} \label{eq: test processes EK FTD}
f (X) - f(X_0) - \int_0^\cdot g (X_{s-}) q(ds), \quad (f, g) \in D.
\end{align}
Moreover, for every \(n \in \mathbb{N}\) let \(\Y^n\) be a set of pairs \((\xi, \phi)\) consisting of real-valued predictable processes on \(\B^n\) such that 
\[
\sup_{s \leq T} E^{P^n} \big[ |\xi_s| + |\phi_s| \big] < \infty, \quad T > 0,
\]
and such that
\[
\xi - \int_0^\cdot \phi_s q^n(ds) 
\]
is a martingale on $\mathbb B^n$.
Let \((U, \mathcal{U})\) be a measurable space and fix a \(\mathcal{U} \otimes \mathcal{B}(\mathbb{R}_+)/\mathcal{B}(\mathbb{R}_+)\) measurable function \(\u \colon U \times \mathbb{R}_+ \to \mathbb{R}_+\) such that for every \((u, t) \in U \times (0, \infty)\)
\begin{align*}
\lim_{\varepsilon \searrow 0} \sup \big\{ \u(u, s) \colon s \not = t, t - \varepsilon \leq s \leq t + \varepsilon\big\} = 0.
\end{align*}
Furthermore, let \(\k \colon \mathbb{R}_+ \to\mathbb{R}_+\) be increasing and continuous and fix a reference point \(x_0 \in E\).
Finally, we define 
\begin{align*} 
A &\triangleq \Big\{ (u, \omega) \in U \times \D(E) \colon r (\omega (t), \omega(t-)) \leq \u (u, t) \k\Big( \sup_{s \leq t}r(\omega(s), x_0)\Big)\text{ for all } t > 0 \Big\}\\&= \bigcap_{n \in \mathbb{N}} \Big\{ (u, \omega) \in U \times \D(E) \colon \\&\hspace{2cm} r ( \omega (S_n (\omega)),\omega (S_n (\omega)-)) \leq \u (u, S_n (\omega)) \kappa \Big(\sup_{s \leq S_n(\omega)}r(\omega(s), x_0)\Big) \Big\}, 
\end{align*}
where \((S_n)_{n \in \mathbb{N}}\) is an exhausting sequences for the jumps of the coordinate process. Clearly, the second line shows that \(A \in \mathcal{U} \otimes \mathcal{B}(\D(E))\).

\begin{listing} \label{ass: main1 EK FTD}
	For \(n \in \mathbb{N}\) each \(\B^n\) supports a \(U\)-valued random variable \(L^n\) such that 
	\begin{align} \label{eq: EK, conv cont}
	P^n ( (L^n, X^n) \in A) \to 1 \text{ as } n \to \infty, 
	\end{align}
	and one of the following hold:
	\begin{enumerate} \item[\textup{(a)}]
		\(P^n \circ (L^{n})^{-1} = P^{1} \circ (L^{1})^{-1}\) for all \(n \in \mathbb{N}\). 
		\item[\textup{(b)}]
		The \(\sigma\)-field \(\mathcal{U}\) is separable and \(\{P^n \circ (L^{n})^{-1}\colon n \in \mathbb{N}\}\) is relatively compact in \(M_m (U)\).
	\end{enumerate}
\end{listing}

\begin{listing} \label{ass: main2 EK FTD}
	There exists a dense set \(\Gamma \subset \mathbb{R}_+\) such that for each \((f, g) \in D\) and \(T > 0\), there exists a sequence \((\xi^n, \phi^n) \in \Y^n\) such that 
	\begin{align} \label{eq: EK UI1 FTD}
	\sup_{n \in \mathbb{N}} \sup_{r \leq T} E^{P^n} \big[ |\xi^n_r| + |\phi^n_r| \big] < \infty,
	\end{align}
	\begin{align}  \label{eq: EK UI2 FTD}
	\lim_{n \to \infty} E^{P^n} \Big[ (\xi^n_t - f(X^n_t)) \prod_{i = 1}^k h_i (X^n_{t_i}) \Big] = 0,
	\end{align}
	\begin{align}  \label{eq: EK UI3 FTD}
	\lim_{n \to \infty} E^{P^n} \Big[ \int_s^t(\phi^n_u - g(X^n_{u-})) q(du) \prod_{i = 1}^k h_i (X^n_{t_i}) \Big] = 0,
	\end{align}
	\begin{align} \label{eq: EK UI4 FTD}
	\lim_{n \to \infty} E^{P^n} \Big[ \int_s^t \phi^n_u (q^n - q) (du) \prod_{i = 1}^k h_i (X^n_{t_i}) \Big] = 0,
	\end{align}
	for all \(k \in \mathbb{N}, t_1, \dots, t_k \in \Gamma \cap [0, t], s, t \in \Gamma \cap [0, T], s < t\), \(h_1, \dots, h_k \in C_b (E)\).
\end{listing}

Before we state the main result of this section, let us fix the main idea behind Assumption \ref{ass: main1 EK FTD}, which is inspired by ideas from \cite{doi:10.1080/17442508108833169,JM81}. Parts (a) or (b) from Assumption \ref{ass: main1 EK FTD} ensure that we can extract a weak-strong convergence subsequence \((Q_n)_{n \in \mathbb{N}}\) from the family \(\{P \circ (L^n, X^n)^{-1} \colon n \in \mathbb{N}\}\) whose limit we denote by \(Q\). Moreover, thanks to \eqref{eq: EK, conv cont} and the partial Portmanteau theorem for weak-strong convergence (Proposition \ref{prop: limsup ws conv}), we can use the set \(A\) to verify that the test processes in \eqref{eq: test processes EK FTD} are \((Q_n, Q)\)-continuous. The trick is that for \((Q_n, Q)\)-continuity we can treat the control variables as deterministic, i.e. we only need to verify the Skorokhod \(J_1\) continuity of the time \(t\) values of the processes \eqref{eq: test processes EK FTD} on the sections \(A_u\) for each \(u \in U\). As on \(A_u\) the Skorokhod \(J_1\) and the local uniform topology coincide by Proposition \ref{prop: A local uniform Skorokhod}, this continuity follows trivially from the dominated convergence theorem. 
Roughly speaking, the paths of \(X^1, X^2, \dots\) take values in a randomized subset of the Skorokhod space \(\D(E)\) with conditionally nice topological properties which can be used thanks to the concept of weak-strong convergence. 
In general, we think that Assumption \ref{ass: main1 EK FTD} should be verifiable when the paths of \(X^1, X^2, \dots\) are constructed through the randomness of the control variables \(L^1, L^2, \dots\). This is e.g. the case for stochastic integrals, see Examples \ref{sec: semimartingale controls} and \ref{sec: RM} below for more details.
\\

The following is a version of Theorem \ref{theo: EK restate} which allows fixed times of discontinuity.
\begin{theorem} \label{theo: EK FTD}
	Suppose that \(X^n \to X\) weakly in \(\D(E)\) endowed with the Skorokhod \(J_1\) topology, and that Assumptions \ref{ass: main1 EK FTD} and \ref{ass: main2 EK FTD} hold.
	Then, \(P\in \cM(\Y)\).
\end{theorem}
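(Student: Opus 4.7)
The strategy is to apply Theorem \ref{theo: main1 pre}. Set $Q^n \triangleq P^n \circ (L^n, X^n)^{-1}$ on $S = U \times \D(E)$. Since $X^n \to X$ weakly on $\D(E)$, the family $\{Q^n_F\}_n$ is relatively compact in $M_c(F)$. Under Assumption \ref{ass: main1 EK FTD}\textup{(a)}, the common $U$-marginal together with Theorem \ref{theo: nice version rel comp} yields a subsequence along which $Q^{n_k} \to_{ws} \bar Q$; under \textup{(b)}, Theorem \ref{theo: Mmc metrizible} makes $M_{mc}(S)$ metrizable, and relative compactness of both marginal families combined with Theorem \ref{theo: ws conv rela comp} again extracts a weak-strong convergent subsequence. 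In either case $\bar Q_F = P \circ X^{-1}$, so by a standard disintegration we may enlarge $(\Omega, \mathcal F, P)$ to carry a $U$-valued random variable $L$ with $P \circ (L, X)^{-1} = \bar Q$, keeping the filtration $\bF = (\sigma(X_s, s \leq t))_{t \geq 0}$ unchanged. Since every $Y \in \Y$ is $\bF$-adapted and does not depend on $L$, the martingale property obtained on the enlargement transfers back to $(\bF, P)$, so we may identify $Q = \bar Q$ and work on the enlarged space.

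For \textup{(A2)} of Theorem \ref{theo: main1 pre} take the determining set from Example \ref{ex: determining set}\textup{(i)} with $D = \Gamma$, viewing each element as a function on $S$ constant in the $U$-coordinate. The continuity required by \textup{(A3)} is verified on the set $A$ from the statement: \eqref{eq: EK, conv cont} gives $Q^n(A) \to 1$, and each section $A_u$ is closed in $\D(E)$ by Proposition \ref{prop: A local uniform Skorokhod}, so Proposition \ref{prop: limsup ws conv} forces $Q(A) = 1$. Proposition \ref{prop: A local uniform Skorokhod} also shows that on $A_u$ the Skorokhod $J_1$ topology coincides with the local uniform topology, and local uniform convergence of c\`adl\`ag paths preserves both $\omega_n(t) \to \omega(t)$ and the left limits $\omega_n(t-) \to \omega(t-)$ for every $t > 0$. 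Consequently $\omega \mapsto f(\omega(t))$ and, by dominated convergence using $\|g\|_\infty$ and local finiteness of $q$, $\omega \mapsto \int_0^t g(\omega(s-)) q(ds)$ are continuous on $A_u$, as are the factors $\prod_i h_i(\omega(t_i))$ coming from $\Z_s$. Uniform boundedness (and hence uniform integrability) of $\{Y^\circ_r(L^n, X^n) : r \in D \cap [0, T], n \in \mathbb N\}$ follows from $\|f\|_\infty, \|g\|_\infty < \infty$ and $q([0, T]) < \infty$.

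It remains to establish \eqref{eq: conv cond}. Fix $(f, g) \in D$, $T > 0$, and pick $(\xi^n, \phi^n) \in \Y^n$ as supplied by Assumption \ref{ass: main2 EK FTD}; set $Y^n \triangleq \xi^n - \int_0^\cdot \phi^n_u q^n(du)$, which is a $P^n$-martingale. For $s < t$ in $D$ and $Z^\circ_s \in \Z_s$, the random variable $Z^\circ_s(L^n, X^n) = \prod_i h_i(X^n_{t_i})$ is $\mathcal F^n_s$-measurable, so $E^{P^n}[(Y^n_t - Y^n_s) Z^\circ_s(L^n, X^n)] = 0$. The elementary decomposition
\begin{align*}
(Y^\circ_t - Y^\circ_s)(L^n, X^n) - (Y^n_t - Y^n_s) &= (f(X^n_t) - \xi^n_t) - (f(X^n_s) - \xi^n_s) \\
&\quad + \int_s^t \phi^n_u (q^n - q)(du) + \int_s^t (\phi^n_u - g(X^n_{u-})) q(du)
\end{align*}
then does the job: multiplying by $Z^\circ_s(L^n, X^n)$ and taking $P^n$-expectations, the three terms vanish in the limit by \eqref{eq: EK UI2 FTD} (applied at $t$ and at $s$), \eqref{eq: EK UI4 FTD} and \eqref{eq: EK UI3 FTD}, respectively. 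Hence $E^{P^n}[(Y^\circ_t - Y^\circ_s)(L^n, X^n) Z^\circ_s(L^n, X^n)] \to 0$, and Theorem \ref{theo: main1 pre} yields $P \in \cM(\Y)$.

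The main obstacle is that the canonical test process $\omega \mapsto \int_0^t g(\omega(s-)) q(ds)$ is not Skorokhod $J_1$-continuous when $q$ has point masses, so the classical Ethier--Kurtz route through the continuous mapping theorem for weak convergence breaks down at the continuity step. The workaround is to use the randomization $L^n$ to confine the paths to the set $A$, on which jump sizes are pointwise controlled, so that Skorokhod $J_1$ and local uniform convergence coincide; the continuous mapping theorem for weak-strong convergence, which treats the control variable as a mere parameter, then delivers the required continuity.
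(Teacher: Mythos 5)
Your proof is correct and follows essentially the same route as the paper: extract a weak-strong convergent subsequence via Theorems \ref{theo: nice version rel comp}, \ref{theo: ws conv rela comp} and \ref{theo: Mmc metrizible}, verify the \((Q^n,Q)\)-continuity in (A3) on the set \(A\) using Propositions \ref{prop: limsup ws conv} and \ref{prop: A local uniform Skorokhod} together with local uniform continuity of the test functionals, and establish \eqref{eq: conv cond} through the identical three-term decomposition controlled by \eqref{eq: EK UI2 FTD}, \eqref{eq: EK UI4 FTD} and \eqref{eq: EK UI3 FTD} plus the martingale property of \(Y^n\). Your explicit disintegration step producing the control variable \(L\) on an enlargement of \((\Omega,\mathcal F,P)\), with the observation that the test processes do not depend on \(L\), is a point the paper leaves implicit, but it does not alter the argument.
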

\begin{proof}
	We apply Theorem \ref{theo: main1 pre} with the product space \((S, \mathcal{S}) = (U \times \D(E), \mathcal{U} \otimes \mathcal{B}(\D(E)))\).
	As in Assumption \ref{ass 2}, we set \(Q^n \triangleq P^n \circ (L^n, X^n)^{-1}\). Since (a) or (b) from Assumption~\ref{ass: main1 EK FTD} hold, and \(X^n \to X\) weakly, it follows from Theorem \ref{theo: nice version rel comp} in case (a) holds and from Theorems \ref{theo: ws conv rela comp} and \ref{theo: Mmc metrizible} in case (b) holds that 
	there exists a subsequence of \((Q_{n})_{n \in \mathbb{N}}\) which converges in \(M_{mc}(S)\) to some probability measure \(Q\).  By passing to this subsequence, we can without loss of generality assume that \((Q^n)_{n \in \mathbb{N}}\) converges in the weak-strong sense to a limit \(Q\). Consequently, (A1) holds. Clearly, we also have \(Q_{\D(E)} = P \circ X^{-1}\). 
	Let \(\Z = \{\Z_t, t \in \mathbb{R}_+\}\) be as in part (i) of Example \ref{ex: determining set} with \(D = \Gamma\), i.e. (A2) holds. It is left to show that (A3) holds. First of all, \(\Y\) is clearly canonical. Moreover, each \(Y^\circ\) is bounded on compact time intervals. Thus, also the uniform integrability assumption in (A3) holds. 
	For each \(t \in \Gamma\) and \(s \in \Gamma \cap [0, t]\) the maps \(Y^\circ_t\) and \(Y^\circ_t Z^\circ_s\) with \(Z^\circ_s \in \Z_s\) are continuous in the local uniform topology. Thus, \eqref{eq: EK, conv cont} and Propositions \ref{prop: limsup ws conv} and \ref{prop: A local uniform Skorokhod} show the \((Q^n, Q)\)-continuity assumption in (A3). Finally, it remains to show \eqref{eq: conv cond}. 
	Take \(Y \in \Y\) such that 
	\[
	Y = f (X) - f(X_0) - \int_0^\cdot g(X_{s-}) q(ds),
	\]
	and set 
	\[
	Y^n \triangleq \xi^n - f(X^n_0) - \int_0^\cdot \phi^n_s q^n(ds),  
	\]
	where \(\xi^n\) and \(\phi^n\) are as in \eqref{eq: EK UI1 FTD}, \eqref{eq: EK UI2 FTD} and \eqref{eq: EK UI3 FTD}. Let \(s, t \in \Gamma\) with \(s < t\) and take \(Z^\circ_s \in \Z_s\). We have 
	\begin{align*}
	(Y^\circ_t (X^n)- Y^n_t + Y^n_s - Y^\circ_s (X^n)) Z^\circ_s (X^n) &= (f(X^n_t) - \xi^n_t + \xi^n_s - f (X^n_s)) \prod_{i = 1}^k h_i(X^n_{t_i}) 
	\\&\hspace{1cm}+\int_s^t \phi^n_u (q^n - q)(du) \prod_{i = 1}^k h_i (X^n_{t_i})
	\\&\hspace{1cm}+ \int_s^t (\phi^n_u - g(X^n_{u-}))q(du) \prod_{i = 1}^k h_i (X^n_{t_i})
	\end{align*}
	for certain \(k \in \mathbb{N}, t_1, \dots, t_k \in \Gamma \cap [0, s], h_1, \dots, h_k \in C_b(E)\). 
	The \(P^n\)-expectation of the first term converges to zero by \eqref{eq: EK UI2 FTD}, the \(P^n\)-expectation of the second term convergences to zero by \eqref{eq: EK UI4 FTD}, and the \(P^n\)-expectation of the third term converges to zero by \eqref{eq: EK UI3 FTD}. We now can proceed as in the proof of Theorem \ref{theo: EK restate}.
	As \(Y^n\) is a martingale on \(\B^n\) we have
	\[
	E^{P^n} \big[ (Y^n_s - Y^n_t) Z^\circ_s(X^n)\big] = 0,
	\]
	and consequently,
	\begin{align*}
	\lim_{n \to \infty} E^{P^n} &\big[ (Y^\circ_t (X^n) - Y^\circ_s (X^n)) Z^\circ_s(X^n) \big] \\&= \lim_{n \to \infty} E^{P^n} \big[ (Y^\circ_t (X^n) - Y^n_t + Y^n_s - Y^\circ_s (X^n)) Z^\circ_s(X^n)\big] = 0.
	\end{align*}
	We conclude that (A3) holds. Hence, the claim follows from Theorem \ref{theo: main1 pre}.
\end{proof}

We now also consider the problem of verifying tightness of the family \(\{X^n \colon n\in \mathbb{N}\}\). 
A quite general criterion for tightness, which can be viewed as a version of Aldous' criterion for processes with fixed times of discontinuity, has recently been proved in \cite{10.1214/16-ECP6}. In the following we present an application of this tightness criterion in the spirit of \cite[Theorem 3.9.4]{EK}.
For a compact set \(K \subset E\) we~set
\[
T_K^n \triangleq \inf (t \in \mathbb{R}_+ \colon X^n_t \not \in K), \quad n \in \mathbb{N}.
\]
It is well-known that \(T^n_K\) is an \(\F^n\)-stopping time (at least when \(\F^n\) is right-continuous which we assume without loss of generality). 
We also define the stochastic interval \[\of 0, T^n_K\gs \triangleq \{(\omega, t) \in \Omega^n \times \mathbb{R}_+ \colon 0 \leq t \leq T^n_K (\omega)\}.\] We have the following tightness condition for \(\{X^n \colon n \in \mathbb{N}\}\).
\begin{theorem} \label{theo: EK FTD tight}
	Assume the following:
	\begin{enumerate}
		\item[\textup{(i)}]
		For every \(\eta, T > 0\) there exists a compact set \(K = K(\eta, T) \subset E\) such that 
		\[
		\inf_{n \in \mathbb{N}} P^n\big(X_t^n \in K  \text{ for all } 0 \leq t \leq T\big) \geq 1 - \eta.
		\]
		\item[\textup{(ii)}]
		Let \(H \subset C_b(E)\) be a subalgebra which is dense for the local uniform topology. 
		For every \(f \in H\) and any compact set \(K \subset E\) there exists a sequence \(\phi^1, \phi^2, \dots\), which of course might depend on \(f\) and \(K\), such that \((f (X^n), \phi^n) \in \Y^n\) and 
		\[
		\sup \big\{ | \phi^n_s (\omega) | \colon n \in \mathbb{N}, (\omega, s) \in \of 0, T^n_K \gs\big\} < \infty, \quad T > 0.
		\]
		\item[\textup{(iii)}] There exists an increasing \cadlag function \(Q \colon \mathbb{R}_+ \to \mathbb{R}_+\) with \(Q(0) = 0\) such that 
		\(
		q^n([0, \cdot]) \to Q
		\) as \(n \to \infty\)
		in the Skorokhod \(J_1\) topology.
	\end{enumerate}
	Then, the family \(\{X^n \colon n \in \mathbb{N}\}\) is tight (in the Skorokhod space with the Skorokhod \(J_1\) topology).
\end{theorem}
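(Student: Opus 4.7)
The strategy is to invoke the Aldous-type tightness criterion from \cite{10.1214/16-ECP6} designed for processes with fixed times of discontinuity, combined with the standard reduction of tightness in $\D(E)$ to tightness of the scalar projections $\{f(X^n)\}_{n \in \mathbb{N}}$ in $\D(\mathbb{R})$ for every $f$ in a dense subalgebra of $C_b(E)$ (cf.\ \cite[Theorem~3.9.1]{EK}). Hypothesis (i) already furnishes the required compact containment; hence we only need tightness of $\{f(X^n)\}_{n \in \mathbb{N}}$ in $\D(\mathbb{R})$ for every $f \in H$.

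Fix $f \in H$, $T > 0$, $\eta > 0$, a compact $K = K(\eta, T) \subset E$ as in (i), and let $(\phi^n)$ be as in (ii). By the defining property of $\Y^n$, the process
\[
N^n \triangleq f(X^n) - \int_0^{\,\cdot} \phi^n_s \, q^n(ds)
\]
is a $P^n$-martingale, and optional stopping at $T^n_K$ shows that $M^n \triangleq N^n_{\cdot \wedge T^n_K}$ is again a martingale. By (ii) there exists a constant $C = C(f, K)$ with $|\phi^n| \leq C$ on $\of 0, T^n_K \gs$ uniformly in $n$, and by (iii) we also have $\sup_n q^n([0, T]) < \infty$. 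Therefore both $M^n$ and the drift $A^n \triangleq \int_0^{\,\cdot \wedge T^n_K} \phi^n_s \, q^n(ds)$ are bounded uniformly in $n$ on $[0, T]$, and on the event $\{T^n_K > T\}$ (which has probability at least $1 - \eta$) we have the decomposition
\[
f(X^n_\tau) - f(X^n_\sigma) = (M^n_\tau - M^n_\sigma) + (A^n_\tau - A^n_\sigma), \qquad 0 \leq \sigma \leq \tau \leq T.
\]

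For any pair $\sigma_n \leq \tau_n \leq T$ of $\F^n$-stopping times with $\tau_n - \sigma_n \to 0$ in probability whose values avoid the (at most countable) atom set of $Q$, the drift increment is bounded by $C (q^n([0, \tau_n]) - q^n([0, \sigma_n]))$ and tends to zero in probability: the $J_1$-convergence of the monotone deterministic functions $q^n([0, \cdot]) \to Q$ yields locally uniform convergence on $\mathbb{R}_+$ away from the atoms of $Q$, where $Q$ is continuous. The martingale increment is controlled via Doob's inequality together with the uniform boundedness of $M^n$; small-mesh control on the jumps and quadratic variation of $M^n$ near non-atoms of $Q$ follows from (iii) and the jump estimate $|\Delta M^n_t| \leq 2 \|f\|_\infty + C \cdot q^n(\{t\})$. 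The main obstacle will be the precise matching of these increment estimates with the exact form of the tightness criterion from \cite{10.1214/16-ECP6}, in particular handling stopping times that cluster near the atoms of $Q$ (where the deterministic drift exhibits a macroscopic jump and a separate identification argument is required) and then transferring the resulting scalar tightness to full tightness of $\{X^n\}$ via the dense subalgebra $H$.
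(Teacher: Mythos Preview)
Your overall strategy---reduce to tightness of $\{f(X^n)\}$ via \cite[Theorem~3.9.1]{EK} and then apply the criterion of \cite{10.1214/16-ECP6}---matches the paper's. The drift part $A^n$ is handled correctly. The gap is in the martingale part.

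You claim that the increments $M^n_\tau - M^n_\sigma$ are ``controlled via Doob's inequality together with the uniform boundedness of $M^n$; small-mesh control on the jumps and quadratic variation of $M^n$ near non-atoms of $Q$ follows from (iii) and the jump estimate.'' This does not work. Boundedness of a martingale gives no smallness of its increments over short time intervals, and Doob's inequality requires a bound on $\langle M^n\rangle_\tau - \langle M^n\rangle_\sigma$, which you do not have: the process $X^n$ is arbitrary (it could contain a diffusive component), and neither (ii) nor (iii) says anything about the quadratic variation of $f(X^n)$ beyond its jumps. The jump estimate $|\Delta M^n_t| \le 2\|f\|_\infty + C\,q^n(\{t\})$ is correct but irrelevant to controlling the conditional second moment of $M^n_\tau - M^n_\sigma$.

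The missing idea is precisely the reason $H$ is assumed to be a \emph{subalgebra}: since $f^2 \in H$, hypothesis (ii) also supplies a sequence $(\psi^n)$ with $(f^2(X^n),\psi^n)\in\Y^n$ and $|\psi^n|$ bounded on $\of 0,T^n_K\gs$. Expanding the square and using both martingale relations yields directly
\[
E\big[(f(X^n_{t\wedge T^n_K}) - f(X^n_{s\wedge T^n_K}))^2 \,\big|\, \mathcal{F}^n_s\big]
= E\Big[\int_{s\wedge T^n_K}^{t\wedge T^n_K}\psi^n_r\,q^n(dr)\,\Big|\,\mathcal{F}^n_s\Big]
- 2 f(X^n_{s\wedge T^n_K})\,E\Big[\int_{s\wedge T^n_K}^{t\wedge T^n_K}\phi^n_r\,q^n(dr)\,\Big|\,\mathcal{F}^n_s\Big],
\]
hence $\le C\big(q^n([0,t]) - q^n([0,s])\big)$. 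This is exactly condition A2$'$) of \cite[Corollary~1.2]{10.1214/16-ECP6}, and (iii) then finishes the argument without any separate handling of atoms of $Q$---that criterion is built to absorb them. Your decomposition into $M^n + A^n$ is unnecessary and, without the $f^2$ trick, cannot be closed.
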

\begin{remark}
	Thanks to \cite[Theorem VI.2.15]{JS}, \(q^n  ([0, \cdot]) \to Q\) in the Skorokhod \(J_1\) topology if and only if there exists a dense set \(I \subset \mathbb{R}_+\) such that for all \(t \in I\)
	\[
	q^n([0, t]) \to Q(t), \qquad \sum_{0 < s \leq t} |q^n(\{s\})|^2 \to \sum_{0 < s\leq t} |\Delta Q (s)|^2.
	\]
	Here, we note that the l.h.s. means that \(q^n\) converges weakly to the measure induced by~\(Q\). The r.h.s. is an additional requirement.
\end{remark}
\begin{proof}[Proof of Theorem \ref{theo: EK FTD tight}]
	\emph{Step 1: Tightness of \(\{f (X^n) \colon n \in \mathbb{N}\}\).} Take \(f \in H\) and a compact set \(K \subset E\). As \(H\) is a subalgebra, \(f^2 \in H\). Denote \(\phi^1, \phi^2, \dots\) the sequence from (ii) for \(f\) and \(K,\) and let \(\psi^1, \psi^2, \dots\) be the sequence for \(f^2\) and \(K\). Fix \(0 \leq s \leq t\). We compute
	\begin{align*}
	E\big[ (f (X^n_{t \wedge T^K_n}) &- f (X^n_{s \wedge T^n_K} ))^2 | \mathcal{F}^n_s \big] \\&= E \big[ f^2 (X^n_{t \wedge T^n_K}) - 2 f (X^n_{t \wedge T^n_K}) f (X^n_{s \wedge T^n_K}) + f^2 (X^n_{s \wedge T^n_K}) | \mathcal{F}^n_s\big]
	\\&= E \Big[ f^2 (X^n_{t \wedge T^n_K}) - \int_0^{t \wedge T^n_K} \psi^n_r q^n(dr) \big| \mathcal{F}_s^n\Big] + E \Big[ \int_0^{t \wedge T^n_K} \psi^n_r q^n(dr) \big| \mathcal{F}^n_s\Big] \\&\hspace{2cm}- 2 f (X^n_{s \wedge T^n_K}) E \Big[ f(X^n_{t \wedge T^n_K}) - \int_0^{t \wedge T^n_K} \phi^n_r q^n(dr) \big| \mathcal{F}^n_s \Big] \\&\hspace{2cm}- 2 f(X^n_{s \wedge T^n_K})E \Big[ \int_0^{t \wedge T^n_K} \phi^n_r q^n(dr) \big| \mathcal{F}^n_s\Big] + f^2(X^n_{s \wedge T^n_K})
	\\&= f^2 (X^n_{s \wedge T^n_K}) - \int_0^{s \wedge T^n_K} \psi^n_r q^n(dr) + E \Big[ \int_0^{t \wedge T^n_K} \psi^n_r q^n(dr) \big| \mathcal{F}^n_s\Big] \\&\hspace{2cm}- 2 f^2 (X^n_{s \wedge T^n_K}) + 2 f(X^n_{s \wedge T^n_K}) \int_0^{s \wedge T^n_K} \phi^n_r q^n(dr) \\&\hspace{2cm}- 2 f(X^n_{s \wedge T^n_K}) E \Big[ \int_0^{t \wedge T^n_K} \phi^n_r q^n(dr) \big| \mathcal{F}^n_s\Big] + f^2(X^n_{s \wedge T^n_K})
	\\&= E \Big[ \int_{s \wedge T^n_K}^{t \wedge T^n_K} \psi^n_r q^n(dr) \big| \mathcal{F}^n_s \Big] - 2 f (X^n_{s \wedge T^n_K}) E \Big[ \int_{s \wedge T^n_K}^{t \wedge T^n_K} \phi^n_r q^n(dr) \big| \mathcal{F}^n_s \Big].
	\end{align*}
	Now, using the (local) boundedness of \(\phi^1, \phi^2,\dots\) and \(\psi^1, \psi^2, \dots\) as assumed in (ii), we obtain the existence of a constant \(C > 0\), which might depend on \(f, K\) and the sequences \(\phi^1, \phi^2, \dots\) and \(\psi^1, \psi^2, \dots\), such that 
	\begin{align*}
	E\big[ (f (X^n_{t \wedge T^K_n}) - f (X^n_{s \wedge T^n_K} ))^2 | \mathcal{F}^n_s \big] &\leq C E \Big[ \int_{s \wedge T^n_K}^{t \wedge T^n_K} q^n(dr) \big| \mathcal{F}^n_s\Big]
	\\&\leq C \int_s^t q^n(dr)
	= C \big( q^n ([0, t]) - q^n([0, s])\big).
	\end{align*}
	By virtue of assumption (iii), we conclude that the assumption A2') from \cite[Corollary~1.2]{10.1214/16-ECP6} holds.
	Next, we explain that the family \(\{f (X^n) \colon n \in \mathbb{N}\}\) also satisfies the compact containment condition given by A1) in \cite{10.1214/16-ECP6}. Take \(\eta, T> 0\) and let \(K = K(\eta, T) \subset E\) be the compact set as in assumption (i). Then, by the continuity of \(f\), the set \(f (K) \subset \mathbb{R}\) is compact. Furthermore, by (i), we obtain
	\[
	\inf_{n \in \mathbb{N}} P \big(f (X^n_t) \in f (K) \text{ for all } 0 \leq t \leq T \big) \geq \inf_{n \in \mathbb{N}}  P \big(X^n_t \in K \text{ for all } 0 \leq t \leq T \big) \geq 1 - \eta.
	\]    
	Hence, A1) from \cite{10.1214/16-ECP6} holds. Now, \cite[Corollary 1.2]{10.1214/16-ECP6} yields that \(\{f (X^n) \colon n \in \mathbb{N}\}\) is tight.
	
	~
	\emph{Step 2: Conclusion.} Due to the fact that we assume the compact containment condition (i.e. (i)) and the properties of \(H\), \cite[Theorem 3.9.1]{EK} yields that tightness of the family \(\{X^n \colon n \in \mathbb{N}\}\) is equivalent to tightness of the families \(\{f (X^n) \colon n \in \mathbb{N}\}\) for every \(f \in H\). As latter is the case thanks to Step 1, the claim of the theorem follows.
\end{proof}

\subsection{Theorems for Semimartingales} \label{sec: FTD main JS gen}
In this section we derive stability results for semimartingales which are tailored for the presence of fixed times of discontinuity. We start with a general result in Section~\ref{sec: main smg FTD}, which we specify further for the annealed case in Section \ref{sec: annealed smg FTD}, i.e. the case where all processes are defined on the same measurable space. Finally, in Section \ref{sec: ito FTD} we discuss the special case of It\^o processes with fixed times of discontinuity.
\subsubsection{The Main Result} \label{sec: main smg FTD}

We pose ourselves into the setting of Section \ref{sec: rec JS}. To be precise, let \((B, C, \nu)\) be a candidate triplet for semimartingale characteristics (corresponding to a fixed continuous truncation function \(h \colon \mathbb{R}^d \to \mathbb{R}^d\)) defined on the canonical space \(\D(\mathbb{R}^d)\). Except stated otherwise, we endow \(\D(\mathbb{R}^d)\) with the Skorokhod \(J_1\) topology.
We write \(C_1 (\mathbb{R}^d)\) for a subset of the set of non-negative bounded continuous functions vanishing in a neighborhood of the origin as described in \cite[VII.2.7]{JS}.

Let \((U, \mathcal{U})\) be a measurable space. 
We fix a \(\mathcal{U} \otimes \mathcal{B}(\mathbb{R}_+)/\mathcal{B}(\mathbb{R}_+)\) measurable function \(\u \colon U \times \mathbb{R}_+ \to \mathbb{R}_+\) such that for every \((u, t) \in U \times (0, \infty)\)
\begin{align*}
\lim_{\varepsilon \searrow 0} \sup \big\{ \u(u, s) \colon s \not = t, t - \varepsilon \leq s \leq t + \varepsilon\big\} = 0,
\end{align*}
let \(\k\colon \mathbb{R}_+ \to \mathbb{R}_+\) be increasing and continuous, and we define 
\begin{align}\label{eq: A}
A &\triangleq \Big\{ (u, \omega) \in U \times \D(\mathbb{R}^d) \colon \|\Delta \omega(t)\| \leq \u (u, t) \k \Big(\sup_{s \leq t}\|\omega(s)\|\Big)\text{ for all } t > 0 \Big\}.
\end{align}
The following is the main result of this section.
\begin{theorem} \label{theo: gen JS local uniform topology 2}
	Let \(\B = (\Omega, \mathcal{F}, \bF, P)\) and \(\B^n = (\Omega^n, \cF^n, \bF^n, P^n)\) be filtered probability spaces which support \(\mathbb{R}^d\)-valued c\`adl\`ag adapted processes \(X\) and \(X^n\), respectively, such that each \(X^n\) is a semimartingale with semimartingale characteristics \((B^n, C^n, \nu^n)\) corresponding to the (continuous) truncation function \(h\). Moreover, for each \(n \in \mathbb{N}\) let \(L^{n}\) be a \(U\)-valued random variable on \(\B^n\) such that one of the following hold:
	\begin{enumerate} \item[\textup{(a)}]
		\(P^n \circ (L^{n})^{-1} = P^{1} \circ (L^{1})^{-1}\) for all \(n \in \mathbb{N}\). 
		\item[\textup{(b)}]
		The \(\sigma\)-field \(\mathcal{U}\) is separable and \(\{P^n \circ (L^{n})^{-1}\colon n \in \mathbb{N}\}\) is relatively compact in \(M_m (U)\).
	\end{enumerate}
	Assume that \(X^n \to X\) weakly on \(\D(\mathbb{R}^d)\) and the existence of a dense set \(\Gamma \subset \mathbb{R}_+\) such that the following hold:
	\begin{enumerate}
		\item[\textup{(i)'}] For every \(t \in \Gamma, \varepsilon > 0\) and \(g \in C_1(\mathbb{R}^d)\) we have 
		\begin{align*}
		P^n (\|B^n_{t}  - B_{t} (X^n)\| \geq \varepsilon) &\to 0,\\ P^n (\|\widetilde{C}^n_{t}  - \widetilde{C}_{t} (X^n)\| \geq \varepsilon) &\to 0,\\ P^n(|g * \nu^n_{t}  - g * \nu_{t} (X^n)| \geq \varepsilon) &\to 0,
		\end{align*}
		as \(n \to \infty.\)
		\item[\textup{(ii)'}]
		For all \(T \in \Gamma\) and \(g \in C_1(\mathbb{R}^d)\) there is a sequence \(S_1, S_2, \dots\) of stopping times on \(\B^1, \B^2, \dots\), i.e. \(S_n\) is an \(\F^n\)-stopping time, such that
		\[
		P^n (S_n < T) \to 0, \quad n \to \infty,
		\]
		and
		\begin{align*}
		\sup_{n \in \mathbb{N}} E^{P^n} \Big[ \|\widetilde{C}^n_{T \wedge S_n}\|^2 + g^2 * \nu^n_{T \wedge S_n} \Big] < \infty.
		\end{align*}
		\item[\textup{(iii)'}]	For all \(t \in \Gamma\) and \(g \in C_1(\mathbb{R}^d)\) the maps 
		\[
		\D(\mathbb{R}^d) \ni \omega  \mapsto B_t (\omega), \widetilde{C}_t (\omega), (g * \nu_t) (\omega)
		\]
		are continuous in the local uniform topology. Moreover, 
		\begin{align}\label{eq: all in A^n assp2}
		P^n( (L^{n}, X^n) \in A ) \to 1 \text{ as } n \to \infty.
		\end{align}
	\end{enumerate}
	Then, \(X\) is a semimartingale for its canonical filtration and its semimartingale characteristics are given by \((B(X), C(X), \nu(X))\).
\end{theorem}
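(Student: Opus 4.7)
The plan is to apply Theorem \ref{theo: main approx} to a localized version of the semimartingale problem from Example \ref{ex: SMP}, with $F = \D(\mathbb{R}^d)$ and $S = U \times F$. After extending $\B$ trivially if needed so as to carry a $U$-valued random variable $L$, the combination of $X^n \to X$ weakly with the relative compactness of $\{P^n \circ (L^n)^{-1} \colon n \in \mathbb{N}\}$ (provided by (a) or (b)) and Theorems \ref{theo: nice version rel comp}, \ref{theo: ws conv rela comp}, \ref{theo: Mmc metrizible} yields a subsequence along which $Q^n := P^n \circ (L^n, X^n)^{-1} \to_{ws} Q := P \circ (L, X)^{-1}$, establishing (A1). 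For (A2) I take the standard determining family $\mathcal{Z}$ from Example \ref{ex: determining set}(i) over the dense set
\[
D := \{t \in \Gamma \colon P(\Delta X^{T^m(X)}_t \neq 0) = 0 \text{ for all } m \in \mathbb{N}\},
\]
where $T^m(\omega) := T_{k_m}(\omega)$ is chosen (as in the proof of Theorem \ref{theo: JS generalization}) so that $\omega \mapsto T^m(\omega)$ and $\omega \mapsto \omega(\cdot \wedge T^m(\omega))$ are $P$-a.s.\ Skorokhod $J_1$ continuous at $X$; the set $D$ has countable complement and is therefore dense.

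To verify (A4) I take canonical versions of the three families of test processes from Example \ref{ex: SMP}, each stopped at $T^m \wedge T$ with $T \in D$; the representative case is
\[
Y^\circ(u, \omega) = \omega(h)^{(i)}_{\cdot \wedge T^m(\omega) \wedge T} - \omega(0)^{(i)} - B^{(i)}_{\cdot \wedge T^m(\omega) \wedge T}(\omega).
\]
Proposition \ref{prop: A local uniform Skorokhod} makes each section $A_u$ of the set $A$ from \eqref{eq: A} Skorokhod $J_1$-closed, so \eqref{eq: all in A^n assp2} together with Proposition \ref{prop: limsup ws conv} yields $Q^n(A) \to 1$ and $Q(A) = 1$. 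Since the Skorokhod $J_1$ and local uniform topologies coincide on each $A_u$ (again by Proposition \ref{prop: A local uniform Skorokhod}), hypothesis (iii)' upgrades to Skorokhod $J_1$ continuity of $\omega \mapsto B_t(\omega)$, $\widetilde C_t(\omega)$, $g * \nu_t(\omega)$ on each section $A_u$; combined with the choice of $D$, which ensures that $\omega \mapsto \omega(h)_{t \wedge T^m(\omega)}$ is $P$-a.s.\ Skorokhod continuous at $X$ via the standard results from Chapter VI of \cite{JS}, this gives the $(Q^n, Q)$-continuity of $Y^\circ_t$ and of $Y^\circ_t Z^\circ_s$ for all $s, t \in D$ with $s \leq t$ and $Z^\circ_s \in \mathcal{Z}_s$. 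For the approximating martingales I use the stopping times from (ii)' (or equivalently $S^n := \inf\{t \colon \|\widetilde C^n_{t \wedge T^m(X^n)}\| \geq K + 1\}$ with $K = K(m, T)$ a bound for $\|\widetilde C_T(\cdot)\|$ on $\Theta_{k_m, T}$) and set
\[
Y^n := X^n(h)^{(i)}_{\cdot \wedge T^m(X^n) \wedge S^n \wedge T} - X^{n, (i)}_0 - B^{n, (i)}_{\cdot \wedge T^m(X^n) \wedge S^n \wedge T}.
\]
By Example \ref{ex: SMP}, $Y^n$ is a square-integrable $P^n$-martingale; Doob's inequality together with the bound on the predictable quadratic variation yields the uniform integrability required in (A4), and hypothesis (i)' delivers $P^n(S^n \leq T) \to 0$ and $P^n(|Y^n_t - Y^\circ_t(L^n, X^n)| \geq \varepsilon) \to 0$. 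Entirely analogous constructions for the quadratic-variation and jump-integral families from parts (ii) and (iii) of Example \ref{ex: SMP} use the moment bounds on $\widetilde C^n$ and $g^2 * \nu^n$ supplied by (ii)'. Applying Theorem \ref{theo: main approx} for every $m \in \mathbb{N}$ and then letting $m \to \infty$ yields that the unstopped processes are $P$-local martingales, which by Example \ref{ex: SMP} identifies $X$ as a semimartingale with characteristics $(B(X), C(X), \nu(X))$.

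The hard part is the $(Q^n, Q)$-continuity verification above. In the classical Skorokhod framework (compare Theorem \ref{theo: JS generalization}) this step requires Skorokhod $J_1$ continuity of $\omega \mapsto B_t(\omega)$, which is violated by functionals of the form \eqref{eq: func ftd} that arise naturally in the presence of fixed times of discontinuity. The introduction of the control variables $L^n$ and the randomized set $A$, together with the coincidence of the Skorokhod $J_1$ and local uniform topologies on each section $A_u$ afforded by Proposition \ref{prop: A local uniform Skorokhod}, is exactly what allows the substantially weaker hypothesis (iii)' to suffice, and is the sole reason weak-strong convergence enters the argument at all.
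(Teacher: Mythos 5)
Your overall architecture is the paper's: apply Theorem \ref{theo: main approx} to the test processes of Example \ref{ex: SMP}, extract a weak-strong convergent subsequence via Theorems \ref{theo: nice version rel comp}, \ref{theo: ws conv rela comp} and \ref{theo: Mmc metrizible}, use \eqref{eq: all in A^n assp2} with Propositions \ref{prop: limsup ws conv} and \ref{prop: A local uniform Skorokhod} to reduce $(Q^n,Q)$-continuity to local uniform continuity on the sections $A_u$, and build the approximating martingales $Y^n$ stopped at $T\wedge S_n$ with the moment bounds from (ii)' and Doob's inequality. However, importing the localization $T^m=T_{k_m}$ from the proof of Theorem \ref{theo: JS generalization} creates a genuine gap: your canonical test processes are stopped at $T^m(\omega)\wedge T$, so you need $(Q^n,Q)$-continuity of $\omega\mapsto B_{t\wedge T^m(\omega)}(\omega)$ (and likewise for $\widetilde C$ and $g*\nu$). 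Hypothesis (iii)' only gives continuity of $\omega\mapsto B_t(\omega)$ at \emph{fixed deterministic} times $t\in\Gamma$; passing to the randomly stopped functional requires an argument such as \cite[IX.3.42]{JS}, which in Theorem \ref{theo: JS generalization} is fed by the strong majoration hypothesis (iv) there --- and Theorem \ref{theo: gen JS local uniform topology 2} has no such hypothesis. The same mismatch breaks your verification of \eqref{eq: main conv cond mart approx}: hypothesis (i)' controls $\|B^n_t-B_t(X^n)\|$ at fixed $t$, not at $t\wedge T^m(X^n)$ (in Theorem \ref{theo: JS generalization} the stopping is built into hypothesis (i); here it is not). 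Your parenthetical alternative $S^n=\inf\{t\colon\|\widetilde C^n_{t\wedge T^m(X^n)}\|\geq K+1\}$ with $K$ a bound on $\Theta_{k_m,T}$ is not equivalent to using the $S_n$ from (ii)', since the boundedness hypothesis (ii) of Theorem \ref{theo: JS generalization} is also absent here.

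The localization is in fact unnecessary, and removing it repairs everything. Work with the unstopped canonical versions evaluated at fixed times $t\in\Gamma$ and take $D=\Gamma$ for the determining set: in the \emph{local uniform} topology the projection $\omega\mapsto\omega(t)$ is continuous at every $\omega$ and every $t$ (this is precisely the point of moving off the Skorokhod topology), and $\omega\mapsto\sum_{s\leq t}g(\Delta\omega(s))$ is locally uniformly continuous as well, so no restriction to non-fixed-jump times is needed. This also matters because your claim that your set $D$ has countable complement is false here: $\Gamma$ is only assumed dense (not co-countable, unlike in Theorem \ref{theo: JS generalization}), so removing the fixed jump times of $X$ could in principle destroy density. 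On the approximating side, stop $Y^n$ at $T\wedge S_n$ with $S_n$ taken directly from (ii)' (which already supplies $P^n(S_n<T)\to 0$ and the second-moment bounds needed, via \cite[Lemma VII.3.34]{JS} and Doob, for uniform integrability); then (i)' at fixed times $t\in\Gamma$ together with $P^n(S_n<T)\to 0$ yields \eqref{eq: main conv cond mart approx} directly.
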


\begin{proof} 
	Let \(Y^\circ\) to be any of the processes in (i) --  (iii) from Example \ref{ex: SMP}.
	We show that \(Y^\circ\) is a \(P\)-martingale for the (right-continuous) canonical filtration on \(\D(\mathbb{R}^d)\). For simplicity, we restrict our attention to the process in (ii) of Example \ref{ex: SMP}.
	More precisely, let \(Y^\circ\) be defined 
	by
	\[
	Y^\circ \triangleq V^{(i)} V^{(j)} - \widetilde{C}^{(ij)},
	\]
	where \(V = (V^{(1)}, \dots, V^{(d)})\) is given by
	\[
	V \triangleq \X (h) - \X_0 - B, 
	\]
	with
	\[
	\X (h) = \X - \sum_{s \leq \cdot} \big( \Delta \X_s - h(\Delta \X_s)\big).
	\]
	Our strategy is to apply Theorem \ref{theo: main approx}.
	We define probability measures \(Q_1, Q_2, \dots\) on the product space \((U \times \D(\mathbb{R}^d), \mathcal{U} \otimes \mathcal{B}(\D(\mathbb{R}^d)))\) via
	\[
	Q_{n} \triangleq P^n \circ \big(L^{n}, X^n\big)^{-1}, \quad n \in \mathbb{N}.
	\]
	As (a) or (b) hold, and \(X^n \to X\) weakly, by Theorem \ref{theo: nice version rel comp} in case (a) holds and by Theorems \ref{theo: ws conv rela comp} and \ref{theo: Mmc metrizible} in case (b) holds,
	there exists a subsequence of \((Q_{n})_{n \in \mathbb{N}}\) which converges in \(M_{mc}(U \times \D(\mathbb{R}^d))\) to some probability measure \(Q\). 
	To keep our notation simple, we denote the subsequence again by \((Q_{n})_{n \in \mathbb{N}}\). Clearly, we have \(Q_{\D(\mathbb{R}^d)} = P \circ X^{-1}\).
	In the following we show that \(Y^\circ_{t}\) is \((Q_{n}, Q)\)-continuous for every \(t \in \Gamma\).

	Thanks to Proposition \ref{prop: A local uniform Skorokhod}, for every \(u \in U\) the set \(A_u = \{\omega \in \D(\mathbb{R}^d) \colon (u, \omega) \in A\}\) is closed in the Skorokhod \(J_1\) topology and on \(A_u\) the Skorokhod \(J_1\) topology coincides with the local uniform topology.
	In (iii)' we assume that \(Q_{n} (A) \to 1\) as \(n \to \infty\). Hence, we deduce from Proposition \ref{prop: limsup ws conv} that \(Q(A) = 1\). The first part of assumption (iii)' yields that \(B_t|_{A_u}\) and \(\widetilde{C}_t|_{A_u}\) are continuous in the Skorokhod \(J_1\) topology for every \(t \in \Gamma\). 
	\begin{lemma}
		Let \(g \colon \mathbb{R}^d \to \mathbb{R}\) be a continuous function which vanishes in a neighborhood of the origin.
		For every \(t > 0\) the map  \(\omega \mapsto \sum_{s \leq t} g(\Delta \omega(s))\)
		is continuous in the local uniform topology.
	\end{lemma}
	\begin{proof}
		For \((\omega, u) \in \D(\mathbb{R}^d) \times (0, \infty)\) we set
		\[
		t^0 (\omega, u) \triangleq 0, \quad t^{p + 1} (\omega, u) \triangleq \inf (t > t^p (\omega, u) \colon \|\Delta \omega(t)\| > u), \quad p \in \mathbb{Z}_+.
		\]
		Furthermore, we set
		\[
		U(\omega) \triangleq \big\{u > 0 \colon \exists t > 0 \text{ such that } \|\Delta \omega(t)\| = u\big\}, \quad \omega \in \D(\mathbb{R}^d).
		\]
		Now, suppose that \(\omega_n \to \omega\) in the local uniform topology and take some \(t > 0\). 
		As \(U(\omega)\) is at most countable, there is a \(0 < u \not \in U(\omega)\) such that \(g(x) = 0\) for \(\|x\| \leq u\). Let \(p' \triangleq \max (p \in \mathbb{Z}_+ \colon t^p (\omega, u) \leq t)\). Then, thanks to \cite[Theorem 2.6.2]{doi:10.1137/1101022}, there exists an \(N \in \mathbb{N}\) such that 
		\[
		\sum_{s \leq t} g(\Delta \omega_{n + N} (s)) = \sum_{k = 1}^{p'} g(\Delta \omega_{n + N} (t^k (\omega, u))), \quad n \in \mathbb{N}.
		\]
		As \(n \to \infty\) the r.h.s. converges to 
		\[
		\sum_{k = 1}^{p'} g(\Delta \omega (t^k (\omega, u))) = \sum_{s \leq t} g(\Delta \omega (s)).
		\]
		This completes the proof.
	\end{proof}
	By virtue of this lemma, for every \(t \in \Gamma\), we conclude that the set
	\begin{align*}
	\{(u&, \omega) \in A \colon A_u \ni \xi \mapsto Y^\circ_{t} (\xi) \text{ is discontinuous at \(\omega\)} \}
	\end{align*}
	is \(Q\)-null and consequently, that \(Y^\circ_{t}\) is \((Q_{n}, Q)\)-continuous. 
	
	Let \(\Z\) be the determining set from part (i) of Example \ref{ex: determining set} with \(D = \Gamma\). Then, it is clear that for every \(Z_s^\circ \in \Z_s\) with \(s \leq t\) the random variable \(Y^\circ_t Z_s^\circ\) is also \((Q_n, Q)\)-continuous. It remains to verify the final two parts of (A4) from Theorem \ref{theo: main approx}. We fix \(T \in \Gamma\). Let \(S_1, S_2, \dots\) be as in (ii)' and set 
	\[
	Y^n \triangleq \big( X^n (h)_{\cdot \wedge T \wedge S_n} - X^n_0 - B^n_{\cdot \wedge T \wedge S_n} \big)^{(i)} \big( X^n (h)_{\cdot \wedge T \wedge S_n} - X^n_0 - B^n_{\cdot \wedge T \wedge S_n} \big)^{(j)} - \widetilde{C}^{n, (ij)}_{\cdot \wedge T \wedge S_n},
	\]
	which is a local martingale on \(\B^n\).
	First of all, as \(|\Delta (X^n (h) - X_0 - B^n)^{(i)}| \leq 2 \|h\|_\infty\), we deduce from \cite[Lemma VII.3.34]{JS} that 
	\begin{align*}
	E^{P^n} \Big[ \sup_{s \leq {T \wedge S_n}} \big| \big( X^n (h)_{s \wedge S_n} - X^n_0 - B^n_{s \wedge S_n} \big)^{(i)} \big|^4 \Big] \lesssim E^{P^n} \Big[ \big|\widetilde{C}^{n, (ii)}_{T \wedge S_n}\big|^2\Big]^\frac{1}{2} + E^{P^n} \Big[ \big|\widetilde{C}^{n, (ii)}_{T \wedge S_n}\big|^2\Big].
	\end{align*}
	Consequently, hypothesis (ii)' yields that 
	\begin{align*}
	\sup_{n \in \mathbb{N}} E^{P^n} \Big[ \sup_{s \leq T} |Y^n_s|^2 \Big] < \infty.
	\end{align*}
	Hence, \(Y^n\) is a true martingale on \(\B^n\) and the set \(\{Y^n_s \colon s \in [0, T], n \in \mathbb{N}\}\) is uniformly integrable. It remains to verify \eqref{eq: main conv cond mart approx}.
	Note that on \(\of 0, T \wedge S_n\gs\)
	\begin{equation}\label{eq:multi bound}\begin{split}
	Y^n - Y^\circ (X^n) = \big( &X^n (h) - X^n_0 - B^n \big)^{(i)} \big( B^{n} - B (X^n) \big)^{(j)} \\&+ \big( X^n (h) - X^n_0 - B (X^n) \big)^{(j)} \big( B^{n} - B (X^n) \big)^{(i)} \\& - \widetilde{C}^{n, (ij)} + \widetilde{C}^{(ij)} (X^n).
	\end{split}\end{equation}
	Let us recall the following elementary fact (\cite[Exercise 3.5, p. 58]{kallenberg}): Let \(\xi_1, \xi_2, \dots\) and \(\eta_1, \eta_2, \dots\) be random variables such that \((\xi_n)_{n \in \mathbb{N}}\) is uniformly integrable and \(\eta_n \to 0\) in probability, then \(\xi_n \eta_n \to 0\) in probability. Using this fact and assumptions (i)' and (ii)', for all \(t \in \Gamma \cap [0, T]\) and \(\varepsilon > 0\) we obtain
	\begin{align*}
	P^n \big(| Y^n_t - Y^\circ_{t} (X^n)| \geq \varepsilon \big) &\leq P^n \big(| Y^n_t - Y^\circ_{t} (X^n)| \geq \varepsilon,t \leq S_n  \big) + P^n\big(T > S_n\big)\to 0
	\end{align*}
	as \(n \to \infty\).
	As \(T \in \Gamma\) was arbitrary and \(\Gamma \subset \mathbb{R}_+\) is dense, we conclude that (A4) holds and consequently, the claim follows.
\end{proof}
\begin{remark}
	The literature contains several conditions for tightness of processes with fixed times of discontinuity. Conditions for semimartingales are given in \cite[Theorems~VI.5.10,~IX.3.20]{JS}. We also refer to the recent article \cite{10.1214/16-ECP6} where a version of Aldous's tightness criterion for processes with fixed times of discontinuity is proved.
\end{remark}
\begin{remark}
	Hypothesis (ii)' holds for instance under the following uniform boundedness assumption: For all \(T > 0\) and \(g \in C_1 (\mathbb{R}^d)\) we have
	\[
	\sup_{\omega \in \D(\mathbb{R}^d)} \big(\| \widetilde{C}_T (\omega)\| + | (g * \nu_T)(\omega) | \big) < \infty.
	\]
	This follows from arguments used in the proof of Theorem \ref{theo: JS generalization}. 
	In practice (ii)' seems to be more flexible than this boundedness condition. For instance, the assumption also holds in case
	\[
	\|\widetilde{C}^n\| \lesssim 1 + \sup_{s\leq \cdot} \|X^n_s\|^2 
	\]
	and 
	\[
	\sup_{n \in \mathbb{N}} E^{P^n}\Big[ \sup_{s \leq T} \|X^n_s\|^4 \Big] < \infty, \quad T > 0.
	\]
	Under suitable linear growth assumptions on the characteristics \((B^n, C^n, \nu^n)\) the fourth moment condition can be verified by Gronwall's lemma. 
\end{remark}

In the following two examples we explain how the control variables \(L^1, L^2, \dots\) can be chosen such that \eqref{eq: all in A^n assp2} holds when \(X^1, X^2, \dots\) are stochastic integrals. In Sections \ref{sec: annealed smg FTD} and \ref{sec: ito FTD} below we specify our setting further such that \eqref{eq: all in A^n assp2} can be interpreted more easily.

\begin{example} \label{sec: semimartingale controls}
	In this example we explain how (iii)' can be verified in case \(X^n\) is a stochastic integral, where we borrow ideas from \cite{doi:10.1080/17442508108833169,JM81}.
	As we only want to fix ideas, suppose that all semimartingales \(X^1, X^2, \dots\) are one-dimensional, defined on the same
	stochastic basis \(\B = (\Omega, \mathcal{F}, \F, P)\) and are stochastic integrals of the form
	\[
	d X^n_t = \sigma^n_t d Z^n_t,
	\]
	for a one-dimensional semimartingale \(Z^n\) and a predictable process \(\sigma^n \in L (Z^n)\).
	In this case we have 
	\(
	\Delta X^n = \sigma^n \Delta Z^n.
	\)
	Assume that there exists a non-negative predictable process \(\gamma\) such that \(\gamma \in L(Z^n)\) and \(|\sigma^n| \leq \gamma (1 + \sup_{s \leq \cdot} |X^n_s|)\) for all \(n \in \mathbb{N}\). Here, the linear growth condition can be relaxed. Then, 
	\[
	|\Delta X^n| \leq \gamma |\Delta Z^n| \Big(1 + \sup_{s \leq \cdot} |X^n_s|\Big) = |\Delta L^n| \Big(1 + \sup_{s \leq \cdot} |X^n_s|\Big), \qquad L^n \triangleq \int_0^\cdot \gamma_s d Z^n_s.
	\]
	Now, we set \(U \triangleq \D(\mathbb{R})\) and \(\u (u, t) \triangleq |\Delta u (t)|\) for \((u, t) \in \D(\mathbb{R}) \times \mathbb{R}_+\). By standard properties of \cadlag functions, the set \(\{t \in [0, T] \colon \u(u,t) \geq a\} = \{t \in [0, T] \colon |\Delta u(t)| \geq a\}\) is finite for every \((a, T, u) \in (0, \infty) \times (0, \infty) \times \D(\mathbb{R})\). Consequently, \eqref{eq: A} holds by Lemma \ref{lem: set G finite jump}. Often enough, one has \(Z^n \equiv Z\), which further implies that the law of \(L^{n}\) is independent of \(n\). 
	
	Alternatively, suppose that there exists a \cadlag measurable process \(Z\) such that \(Z^n \to Z\) in the ucp\footnote{ucp = uniformly on compacts in probability} topology, i.e. for all \(t \in \mathbb{R}_+\)
	\[
	\sup_{s \leq t} |Z^n_s - Z_s| \to 0
	\]
	in probability as \(n \to \infty\), and fix \(T > 0\). Then, up to passing to a subsequence which we ignore for simplicity, the set 
	\[
	\Omega^o \triangleq \Big\{ \omega \in \Omega \colon \lim_{n \to \infty} \sup_{s \leq T} |Z^n_s (\omega) - Z_s (\omega)| = 0 \Big\}
	\]
	is full. Now, when we define
	\[
	\u (\omega, t) \triangleq \begin{cases} \sup_{n \in \mathbb{N}} |\Delta Z^n_t (\omega)|,& \omega \in \Omega^o,\\
	0,&\text{otherwise},\end{cases}
	\]
	for \((\omega, t) \in \Omega \times [0, T]\), the set \(\{t \in [0, T] \colon \u (\omega, t) \geq a\}\) is finite for every \(\omega \in \Omega\) and \(a > 0\). 
	
	To see this, take \(\omega \in \Omega^o\) and let \(N = N(\omega) \in \mathbb{N}\) be such that 
	\[
	\sup_{n \geq N}\sup_{s \leq T} | Z^n_s (\omega) - Z_s (\omega) | \leq \frac{a}{3}.
	\]
	Then, for every \(t \in [0, T]\)
	\begin{align*}
	\sup_{n \geq N} |\Delta Z^n_t (\omega)| \geq a \ \Longrightarrow \ | \Delta Z_t (\omega)| &\geq \sup_{n \geq N}|\Delta Z^n_t (\omega)| - \sup_{n \geq N} | \Delta Z^n_t (\omega) - \Delta Z_t (\omega) | \\&\geq a - \frac{2a}{3} = \frac{a}{3}.
	\end{align*}
	As there are only finitely many \(t \in [0, T]\) such that \(|\Delta Z_t (\omega)| \geq a/3\), there are also only finitely many \(t \in [0, T]\) such that \(\sup_{n \geq N} |\Delta Z^n_t (\omega)| \geq a\). 
	Now, since
	\[
	\Big\{t  \colon \sup_{n \in \mathbb{N}} |\Delta Z^n_t (\omega)| \geq a \Big\} \subset 
	\Big(\bigcup_{k = 1}^{N - 1} \Big\{ t  \colon |\Delta Z^k_t (\omega)| \geq \frac{a}{2}\Big\}\Big) \cup \Big\{t \colon \sup_{n \geq N} |\Delta Z^n_t (\omega)| \geq \frac{a}{2}\Big\},
	\]
	we conclude that there are at most finitely many \(t \in [0, T]\) such that \(\sup_{n \in \mathbb{N}} |\Delta Z^n_t (\omega)| \geq a\), which was the claim.
	
	Up to a pasting argument, if \(\gamma\) is e.g. constant, we can take \(U \triangleq \Omega\) and \(L^{n}\equiv \on{Id}\) such that \eqref{eq: A} holds.
	In particular, as we assume that all processes are defined on the same stochastic basis, also (a) in Theorem \ref{theo: gen JS local uniform topology 2} holds. At the cost of slightly more complicated conditions related inter alia to part (b) of Theorem \ref{theo: gen JS local uniform topology 2}, this argument can be transferred to the more general case where \(\B^n = (\Omega, \mathcal{F}, \F^n, P^n)\). More details on this strategy are given in the proof of Corollary \ref{coro: main coro IP} below.
\end{example}
\begin{example}\label{sec: RM}
	In this example we explain how (iii)' can be checked in case \(X^1, X^2, \dots\) are stochastic integrals w.r.t. a (compensated) random measure. 
	As in Example \ref{sec: semimartingale controls}, for simplicity assume that all \(X^1, X^2, \dots\) are one-dimensional and defined on the same stochastic basis. Moreover, we assume that
	\[
	X^n = X^n_0 +  \int_0^\cdot \int H^n(s, y) (\p^n - \q^n) (ds, dy),
	\]
	where \(\p^n - \q^n\) is a compensated integer-valued random measure on a Blackwell space \((E, \mathcal{E})\) and \(H^n \in G_\textup{loc} (\p^n)\).
	Suppose that \(\gamma\) is a non-negative predictable process such that a.s. \(\gamma * \q^n < \infty\) and  \(|H^n| \leq \gamma (1 + \sup_{s \leq \cdot} |X^n_s|)\) for all \(n \in \mathbb{N}\). Here, the linear growth condition can be relaxed.
	We now set 
	\[
	L^{n} \triangleq \int_0^\cdot \int \gamma (s, y) (\p^n + \q^n) (ds, dy),
	\]
	and we obtain that
	\begin{align*}
	|\Delta X^n_t| &= \Big| \int H^n (t, y) \p^n(\{t\} \times dy) -  \int H^n (t, y) \q^n (\{t\} \times dy) \Big|
	\\&\leq \int | H^n(t, y) | \p^n(\{t\} \times dy) + \int | H^n(t, y) | \q^n(\{t\} \times dy)
	\\&\leq \int \gamma (t, y) (\p^n + \q^n) (\{t\} \times dy) \Big(1 + \sup_{s \leq t} |X^n_s|\Big)  
	\\&= | \Delta L^{n}_t | \Big(1 + \sup_{s \leq t} |X^n_s|\Big) 
	\end{align*}
	for all \(t > 0\). Now, we can define \(U = \D(\mathbb{R})\) and \(\u (u, t) = |\Delta u(t)|\) such that \eqref{eq: A} holds. Often enough the law of \(L^{n}\) is furthermore independent of \(n\). The strategy outlined in the second part of Example \ref{sec: semimartingale controls} can also be transferred into this setting, see the proof of Corollary~\ref{coro: main coro IP} below.
\end{example}

\subsubsection{The Annealed Setting} \label{sec: annealed smg FTD}
In this section we assume that \(X^1, X^2, \dots\) are defined on the same filtered probability space, which can be viewed as an annealed setting, see Section \ref{sec: motivation}. In this case we allow the limiting characteristics to be random. 

We fix a filtered probability space \(\B = (\Omega, \mathcal{F}, \F, P)\) which supports \(\mathbb{R}^d\)-valued c\'adl\'ag adapted processes \(X^1, X^2, \dots\). 
Moreover, we define an \emph{extension} \((\Omega', \mathcal{F}', \F')\) of the filtered space \((\Omega, \mathcal{F}, \F)\) by 
\begin{align*}
\Omega' \triangleq \Omega \times \D(\mathbb{R}^d), \quad \mathcal{F}' \triangleq \mathcal{F} \otimes \mathcal{D}(\mathbb{R}^d), \quad \mathcal{F}'_t \triangleq \bigcap_{s > t} \big(\mathcal{F}_s \otimes \mathcal{D}_s (\mathbb{R}^d)\big), 
\end{align*}
where \(\mathcal{D}(\mathbb{R}^d)\) and \((\mathcal{D}_t(\mathbb{R}^d))_{t \geq 0}\) are the canonical \(\sigma\)-field and the canonical (right-continuous) filtration on \(\D(\mathbb{R}^d)\). We define a \emph{canonical process} on \(\Omega'\) by \(\X(\omega, \alpha) = \alpha\) for~\((\omega, \alpha) \in \Omega'\).

Let \((B, C, \nu)\) be a candidate triplet on \((\Omega', \mathcal{F}', \F')\) relative to a fixed continuous truncation function \(h \colon \mathbb{R}^d \to \mathbb{R}^d\), cf. \cite[III.2.3]{JS}. 
Let \(\u \colon \Omega \times \mathbb{R}_+ \to \mathbb{R}_+\) be an \(\mathcal{F} \otimes \mathcal{B}(\mathbb{R}_+)/\mathcal{B}(\mathbb{R}_+)\) measurable function such that for every \((\omega, t) \in \Omega \times (0, \infty)\)
\begin{align*}
\lim_{\varepsilon \searrow 0} \sup \big\{ \u(\omega, s) \colon s \not = t, t - \varepsilon \leq s \leq t + \varepsilon\big\} = 0,
\end{align*}
let \(\kappa \colon \mathbb{R}_+ \to \mathbb{R}_+\) be increasing and continuous, and define 
\begin{align*} 
A^{n} &\triangleq \Big\{ \omega \in \Omega \colon \|\Delta X^{n}_t (\omega)\| \leq \u (\omega, t) \kappa \Big(\sup_{s\leq t}\|X^n_s(\omega)\|\Big) \text{ for all } t > 0 \Big\}.
\end{align*}
The set \(\bigcap_{n \in\mathbb{N}} A^n\) can be interpreted as follows: The jumps of the processes \(X^1, X^2, \dots\) are controlled by a process \(\u\) which roughly behaves like the jump process \(|\Delta Z|\) of some one-dimensional \cadlag process \(Z\). 

The main result of this section is the following:
\begin{theorem} \label{theo: gen JS local uniform topology 3}
	Suppose that each \(X^n\) is a semimartingale with semimartingale characteristics \((B^n, C^n,\) \(\nu^n)\) corresponding to the (continuous) truncation function \(h\). 
	Assume that there exists a probability measure \(P \circ X^{-1}\) on \((\D(\mathbb{R}^d), \mathcal{D}(\mathbb{R}^d))\) such that \(P \circ (X^n)^{-1} \to P \circ X^{-1}\) weakly (where \(\D(\mathbb{R}^d)\) is endowed with the Skorokhod \(J_1\) topology) and that there exists a dense set \(\Gamma \subset \mathbb{R}_+\) such that the following hold:
	\begin{enumerate}
		\item[\textup{(i)}] For every \(t \in \Gamma, \varepsilon > 0\) and \(g \in C_1(\mathbb{R}^d)\) we have 
		\begin{align*}
		P (\|B^n_{t}  - B_{t} (X^n)\| \geq \varepsilon) &\to 0,\\ P (\|\widetilde{C}^n_{t}  - \widetilde{C}_{t} (X^n)\| \geq \varepsilon) &\to 0,\\ P(|g * \nu^n_{t}  - g * \nu_{t} (X^n)| \geq \varepsilon) &\to 0,
		\end{align*}
		as \(n \to \infty.\)
		\item[\textup{(ii)}]
		For all \(T \in \Gamma\) and \(g \in C_1(\mathbb{R}^d)\) there is a sequence of stopping times \((S_n)_{n \in \mathbb{N}}\) such that
		\[
		P (S_n < T) \to 0, \quad n \to \infty,
		\]
		and
		\begin{align*}
		\sup_{n \in \mathbb{N}} E^{P} \Big[ \|\widetilde{C}^n_{T \wedge S_n}\|^2 + g^2 * \nu^n_{T \wedge S_n} \Big] < \infty.
		\end{align*}
		\item[\textup{(iii)}]	For all \(\omega \in \Omega, t \in \Gamma\) and \(f \in C_1(\mathbb{R}^d)\) the maps 
		\[
		\D(\mathbb{R}^d) \ni \alpha  \mapsto B_t (\omega, \alpha), \widetilde{C}_t (\omega, \alpha), (f * \nu_t) (\omega, \alpha)
		\]
		are continuous in the local uniform topology. Moreover,
		\begin{align*}
		P (A^{n}) \to 1\text{ as } n \to \infty.
		\end{align*}
	\end{enumerate}
	Then, there exists a probability measure \(Q\) on \((\Omega', \mathcal{F}')\), which is a weak-strong accumulation point of \(\{P \circ (\on{Id}, X^n)^{-1}\colon n \in \mathbb{N}\}\), with \(Q_\Omega = P\) and \(Q_{\D(\mathbb{R}^d)} = P \circ X^{-1}\) such that on \((\Omega', \mathcal{F}', \F', Q)\) the canonical process \(\X\) is a semimartingale with characteristics \((B, C, \nu)\). 
\end{theorem}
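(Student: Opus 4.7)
The plan is to invoke Theorem \ref{theo: main approx} on the extension $(\Omega', \mathcal{F}', \mathbf{F}')$ with the family $\Y$ of canonical test processes from items (i)--(iii) of Example \ref{ex: SMP} built from the candidate triplet $(B, C, \nu)$, taking $(U, \mathcal{U}) = (\Omega, \mathcal{F})$, $\B^n = \B$ for every $n$, and $L^n \equiv \on{Id}_\Omega$ as control variable. Since $P \circ (L^n)^{-1} = P$ for every $n$ and $X^n \to X$ weakly on $\D(\mathbb{R}^d)$, Theorem \ref{theo: nice version rel comp} yields a subsequence along which $Q^n \triangleq P \circ (\on{Id}, X^n)^{-1} \to_{ws} Q$ for some $Q$ with $Q_\Omega = P$ and $Q_{\D(\mathbb{R}^d)} = P \circ X^{-1}$, verifying (A1). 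For (A2) I would use the determining family on $\D(\mathbb{R}^d)$ from Example \ref{ex: determining set}(i) with $D = \Gamma$, and canonicity of $\Y$ is immediate.

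The heart of the argument is to verify the $(Q^n, Q)$-continuity hypothesis in (A4). Hypothesis (iii) gives $Q^n(A) = P(A^n) \to 1$; since every section $A_\omega$ is closed in the Skorokhod $J_1$ topology by Proposition \ref{prop: A local uniform Skorokhod}, Proposition \ref{prop: limsup ws conv} upgrades this to $Q(A) = 1$. On each $A_\omega$ the Skorokhod $J_1$ topology coincides with the local uniform topology (again by Proposition \ref{prop: A local uniform Skorokhod}), so by the first part of hypothesis (iii) the projections $\alpha \mapsto B_t(\omega, \alpha), \widetilde{C}_t(\omega, \alpha), (g * \nu_t)(\omega, \alpha)$ are $J_1$-continuous on $A_\omega$ for every $\omega \in \Omega$ and $t \in \Gamma$. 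Combined with Skorokhod continuity in the local uniform topology of the pure-jump sums $\alpha \mapsto \sum_{s \leq t} g(\Delta \alpha(s))$, established via the auxiliary lemma in the proof of Theorem \ref{theo: gen JS local uniform topology 2}, this produces the $(Q^n, Q)$-continuity of the canonical evaluations of every $Y \in \Y$ at times $t \in \Gamma$.

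For the martingale-approximation component of (A4) I would mirror the construction in the proof of Theorem \ref{theo: gen JS local uniform topology 2}: for the quadratic test process $V^{(i)} V^{(j)} - \widetilde{C}^{(ij)}$ stopped at $T \in \Gamma$, replace $(B, \widetilde{C})$ by $(B^n, \widetilde{C}^n)$ and stop additionally at the sequence $S_n$ from hypothesis (ii). The semimartingale characterization of Example \ref{ex: SMP} makes each such $Y^n$ a local martingale on $\B$, uniform $L^2$-boundedness (hence the uniform integrability of $\{Y^n_s \colon s \in [0, T], n \in \mathbb{N}\}$) follows from hypothesis (ii) combined with Doob's inequality and the Burkholder--Davis--Gundy estimate \cite[Lemma VII.3.34]{JS}, and the convergence in probability $P(|Y^n_t - Y^\circ_t(\on{Id}, X^n)| \geq \varepsilon) \to 0$ follows from the decomposition \eqref{eq:multi bound} together with hypothesis (i) and the elementary fact that products of $L^1$-bounded and probability-vanishing sequences vanish in probability. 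The linear and jump-measure test processes from items (i) and (iii) of Example \ref{ex: SMP} are handled analogously.

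The main obstacle, which is precisely what the weak-strong formulation is designed to overcome, is that the projections $\alpha \mapsto B_t(\omega, \alpha), \widetilde{C}_t(\omega, \alpha)$ are generally \emph{not} Skorokhod $J_1$-continuous on all of $\D(\mathbb{R}^d)$ when fixed times of discontinuity are present, so the classical continuous mapping theorem is unavailable. The randomized set $A$ resolves this: the control variable $L^n = \on{Id}_\Omega$ transports the $\omega$-dependent jump bound $\u(\omega, \cdot)$ through the limit, and Proposition \ref{prop: A local uniform Skorokhod} reduces the continuity requirement to a section-wise statement on a set of full $Q$-measure, which is exactly what hypothesis (iii) provides.
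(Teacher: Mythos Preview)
Your approach matches the paper's exactly: its own proof simply states that one repeats the argument of Theorem \ref{theo: gen JS local uniform topology 2} with \((U, \mathcal{U}) = (\Omega, \mathcal{F})\) and \(L^{n} \equiv \on{Id}\), leaving the details to the reader, and your sketch fills in precisely those details.

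One technical point to correct: the determining family from Example \ref{ex: determining set}(i) is tailored to the situation where the filtration is generated by the coordinate process alone, but here the target filtration \(\F'\) on \(\Omega'\) also contains the factor \(\mathcal{F}_s\). To conclude that the test processes are \(\F'\)-martingales (which is what the semimartingale characterization on \((\Omega', \mathcal{F}', \F', Q)\) requires, since \((B, C, \nu)\) genuinely depends on \(\omega\)), you must enlarge \(\Z_t\) to include products \(G \cdot \prod_i h_i(\X_{t_i})\) with \(G\) ranging over bounded \(\mathcal{F}_t\)-measurable functions of the first coordinate. These still lie in \(C_S\) (they are constant, hence continuous, in the \(\alpha\)-variable), and \(Z_s^\circ(\on{Id}, X^n)\) remains \(\mathcal{F}_s\)-measurable, so nothing else in your argument changes.
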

\begin{proof}
	The proof is similar to those of Theorem \ref{theo: gen JS local uniform topology 2} where we take \((U, \mathcal{U}) = (\Omega, \mathcal{F})\) and \(L^{n} (\omega) \equiv L (\omega) = \omega\) for all \(\omega \in \Omega\). 
	The details are left to the reader.
\end{proof}
In the context of SDEs with semimartingale drivers, Theorem \ref{theo: gen JS local uniform topology 3} is related to \cite[Theorem~3.16]{JM81}.

\begin{example}
	We provide a short example for an application of Theorem \ref{theo: gen JS local uniform topology 3} in a setting without jumps. A more detailed exposition of a closely related setting with fixed times of discontinuity is given in Section \ref{sec: ito FTD} below. 
	We take \(\B\) as underlying filtered space. Let \(\tau^0, \tau^1, \tau^2, \dots\) be stopping times on \(\B\), which we think to be change points of economic scenarios, and let \(W\) be a one-dimensional standard Brownian motion on the stochastic basis \(\B\). Moreover, take \(b, b^\circ \colon \mathbb{R}_+ \times \mathbb{R} \to \mathbb{R}\) and \(\sigma, \sigma^\circ \colon \mathbb{R}_+ \times \mathbb{R} \to \mathbb{R}\) to be sufficiently regular functions such that for each \(n \in \mathbb{Z}_+\) the SDE
	\begin{align*}
	d X^n_t = \big( b (t, X^n_t) \1_{\{t \leq \tau^n\}} &+ b^\circ(t, X^n_t) \1_{\{t > \tau^n\}}\big) dt \\&+ \big( \sigma (t, X^n_t) \1_{\{t \leq \tau^n\}} + \sigma^\circ(t, X^n_t) \1_{\{t > \tau^n\}}\big) d W_t, \quad X^n_0 = x_0, 
	\end{align*}
	has a solution process \(X^n\). It is well-known that (local) Lipschitz (or monotonicity) and linear growth conditions on \(b, b^\circ\) and \(\sigma, \sigma^\circ\) imply existence (and uniqueness in a strong sense), see, e.g. \cite[Chapter 14]{J79} or \cite[Section 4]{JM81}. We also stress that the above SDEs have random coefficients, where the randomness enters in terms of the sequence \(\tau^0, \tau^1, \tau^2, \dots\). 
	Part (i) of Theorem \ref{theo: gen JS local uniform topology 3} holds if \(\tau^n \to \tau^0\) in probability. Under this condition and under suitable assumptions on the coefficients (see \cite[Section~4]{JM81}), if \(X^1, X^2, \dots\) converge weakly, then\footnote{More precisely, under (local) Lipschitz or monotonicity conditions as given in \cite[Section 4]{JM81}, by \cite[Corollary~2.26]{JM81} there exists a unique solution measure (in the sense of \cite[Definition 1.6]{JM81}) to the SDE of \(X^0\) and it is strong (see \cite[Definition 2.21]{JM81}). Thus, by \cite[Theorem 2.22]{JM81}, the solution measure has the form \(\delta_{X^0 (\omega)}(d \alpha) P(d \omega)\). As \(Q\) in Theorem~\ref{theo: gen JS local uniform topology 3} is a solution measure to the SDE of \(X^0\) by \cite[Theorem 2.10]{JM81}, the claim follows.} the measure \(Q\) in Theorem \ref{theo: gen JS local uniform topology 3} is given by 
	\[
	Q (d \omega, d \alpha) = \delta_{X^0 (\omega)} (d \alpha) P(d \omega), 
	\]
	which, by virtue of \(Q_{\D(\mathbb{R})} = P \circ (X^0)^{-1}\), yields that the laws of \(X^1, X^2, \dots\) converge weakly to the law of \(X^0\). In fact, we can say more: By Remark \ref{rem: ws conv, in prob}, we can even conclude that \(X^n \to X^0\) in the ucp topology. This observation can be compared to classical ucp stability results for semimartingale SDEs as for instance given in \cite{protter78}. We stress that the above argument does not rely on the Lipschitz continuity of the coefficients as the argument in \cite{protter78}, but on strong existence and uniqueness. Therefore, we think it is more flexible when it comes to  the regularity of the coefficients. However, in the presence of jumps the argument only yields converges in probability for the Skorokhod \(J_1\) topology, which is weaker than ucp convergence.
\end{example}

\subsubsection{Application: It\^o Processes with Fixed Times of Discontinuity} \label{sec: ito FTD}
In this section we specify Theorem \ref{theo: gen JS local uniform topology 2} for solutions to SDEs driven by a Gaussian continuous local martingale and a Poisson random measure. 

Let \((E, \mathcal{E})\) be a Blackwell space and let \(\mathcal{P}\) be the predictable \(\sigma\)-field on \(\mathbb{R}_+ \times \D(\mathbb{R}^d)\) when \(\D(\mathbb{R}^d)\) is equipped with the canonical filtration.
Let \(\sigma \colon \mathbb{R}_+ \times \D(\mathbb{R}^d) \to \mathbb{R}^{d \times r}\) be \(\mathcal{P}/\mathcal{B}(\mathbb{R}^{d \times r})\) measurable and let \(v, b \colon \mathbb{R}_+ \times \D(\mathbb{R}_+) \times E \to \mathbb{R}^d\) be \(\mathcal{P} \otimes\mathcal{E}/\mathcal{B}(\mathbb{R}^d)\) measurable. 
Let \(C^n\) and \(C\) be covariance functions for \(r\)-dimensional continuous Gaussian martingales such that 
\[
C^n = \int_0^\cdot c^n_s ds, \qquad C = \int_0^\cdot c_s ds.
\]
Moreover, let \(\q\) and \(\q^n\) be intensity measures of Poisson random measures on \((E, \mathcal{E})\), let \(q^n\) and \(q\) be \(\sigma\)-finite measures on \((\mathbb{R}_+ \times E, \mathcal{B}(\mathbb{R}_+) \otimes \mathcal{E})\) and let \(h \colon \mathbb{R}^d \to \mathbb{R}^d\) be a continuous truncation function.
For each \(n \in \mathbb{N}\) we fix a stochastic basis \(\B^n \triangleq (\Omega, \mathcal{F}, \F^n, P)\) which supports the following: A continuous  Gaussian martingale \(W^n\) with covariance function~\(C^n\), a Poisson random measure \(\p^n\) with intensity measure \(\q^n\) and a semimartingale \(X ^n\) with dynamics
\begin{align*}
X^n = X^n_0 + \int_0^\cdot \int b^n (t, y) q^n(dt, dy) &+ \int_0^\cdot \sigma^n_t d W^n_t + \int_0^\cdot \int h(v^n(t, y)) (\p^n - \q^n)(dt, dy) \\& + \int_0^\cdot \int (v^n(t, y) - h(v^n(t, y))) \p^n (dt, dy),
\end{align*}
where \(b^n, \sigma^n\) and \(v^n\) are suitable processes such that the integrals are well-defined.
We now formulate some technical conditions:
\begin{listing} \label{ass: A1 IP}
	There exists a \cadlag process \(X\) on some probability space \((\Omega^*, \mathcal{F}^*, P^*)\) such that \(X^n \to X\) weakly on \(\D(\mathbb{R}^d)\) endowed with the Skorokhod \(J_1\) topology. Let \(\F^X\) be the canonical (right-continuous) filtration generated by \(X\) and set \(\B \triangleq (\Omega^*, \mathcal{F}^*, \F^X, P^*)\).
\end{listing}

\begin{listing} \label{ass: A2 IP}
	Part (i)' and (ii)' of Theorem \ref{theo: gen JS local uniform topology 2} hold for a dense set \(\Gamma \subset \mathbb{R}_+\) and the following characteristics \((B^n, C^n, \nu^n)\) and \((B, C, \nu)\):
	\begin{align*}
	B^n &\triangleq \int_0^\cdot \int b^n (t, y) q^n(dt, dy), \\
	C^n &\triangleq \int_0^\cdot \sigma^n_t c^n_t \sigma^{n*}_t dt,\\
	\nu^n([0, t] \times G) &\triangleq \int_0^t \int \1_G (v^n(s, y)) \q^n (ds, dy), \quad t \in \mathbb{R}_+, G \in \mathcal{B}(\mathbb{R}^d\backslash \{0\}),
	\end{align*}
	and 
	\begin{align*}
	B &\triangleq \int_0^\cdot \int b (t, y) q(dt, dy), \\
	C &\triangleq \int_0^\cdot \sigma_t c_t \sigma^*_t dt,\\
	\nu([0, t] \times G) &\triangleq \int_0^t \int \1_G (v(s, y)) \q (ds, dy), \quad t \in \mathbb{R}_+, G \in \mathcal{B}(\mathbb{R}^d\backslash \{0\}).
	\end{align*}
	It is implicit\footnote{In particular, \(\Delta B^n_t = \int h(x) \nu^n(\{t\} \times dx)\), which means \(\int b^n(t, y) q^n(\{t\} \times dy) = \int h (v^n(t, y)) \q^n(\{t\} \times dy)\). As a consequence, \(\Delta X^n_t = \int v^n(t, y) \p^n(\{t\} \times dy)\).} that \((B^n, C^n, \nu^n)\) and \((B, C, \nu)\) are candidate triplets in the sense of \cite[III.2.3]{JS}.
	Moreover, for every \(t \in \Gamma\) and \(g \in C_1 (\mathbb{R}^d)\) the functions \(B_t, \widetilde{C}_t\) and \(g * \nu_t\) are continuous on \(\mathbb{D}(\mathbb{R}^d)\) endowed with the local uniform topology.
\end{listing}

\begin{listing} \label{ass: A3 IP}
	Let \(\k\colon \mathbb{R}_+\to\mathbb{R}_+\) be increasing and continuous. For each \(T \in \mathbb{N}\) there exists a sequence \(\gamma^n = \gamma^{n, T} \colon \mathbb{R}_+ \times \Omega \times E \to \mathbb{R}_+\) of non-negative \(\mathcal{B}(\mathbb{R}_+)\otimes \mathcal{F} \otimes \mathcal{E}/\mathcal{B}(\mathbb{R}_+)\) measurable functions with the following properties: 
	\begin{enumerate}
		\item[\textup{(i)}] a.s. \(\gamma^n * \p^n_T < \infty\) for all \(n \in \mathbb{N}\).
		\item[\textup{(ii)}] \(\|v^n\| \leq \gamma^n \k (\sup_{s \leq \cdot} \|X^n_s\|)\) on \(\Omega \hspace{0.05cm}\times(T - 1, T] \times E\).
		\item[\textup{(iii)}] There exists a \cadlag measurable process \(Z = Z^{T}\) such that 
		\[
		\sup_{s \leq T} |\gamma^n * \p^n_s - Z_s| \to 0 
		\]
		in probability \(n \to \infty\).
	\end{enumerate}
\end{listing}

\begin{corollary} \label{coro: main coro IP}
	Suppose that Assumptions \ref{ass: A1 IP}, \ref{ass: A2 IP} and \ref{ass: A3 IP} hold. Then, \(X\) is a semimartingale on~\(\B\) whose semimartingale characteristics are given by \((B(X), C(X), \nu(X))\). Possibly on a standard extension of \(\B\), there is a Gaussian continuous martingale \(W\) with covariance function \(C\) and a Poisson random measure \(\p\) with intensity measure \(\q\) such that 
	\begin{align*}
	X = X_0 + \int_0^\cdot \int b (t, X, y) q(dt, dy) &+ \int_0^\cdot \sigma_t (X) d W_t + \int_0^\cdot \int h(v(t, X, y)) (\p - \q)(dt, dy) \\& + \int_0^\cdot \int (v(t,X, y) - h(v(t,X, y))) \p (dt, dy).
	\end{align*}
\end{corollary}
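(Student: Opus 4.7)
The strategy is to apply Theorem \ref{theo: gen JS local uniform topology 2} to identify $X$ as a semimartingale with the claimed characteristics and then invoke classical semimartingale representation results to produce \(W\) and \(\p\). Assumption \ref{ass: A1 IP} supplies the weak convergence \(X^n \to X\), and Assumption \ref{ass: A2 IP} provides parts (i)' and (ii)' of Theorem \ref{theo: gen JS local uniform topology 2} together with the local uniform continuity of \(B_t\), \(\widetilde{C}_t\) and \(g * \nu_t\) appearing in (iii)'. What remains is to construct the control variables \(L^n\) and the function \(\u\) so that \eqref{eq: all in A^n assp2} holds.

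Since all \(X^n\) live on the common probability space \((\Omega, \mathcal{F}, P)\), I would pick \((U, \mathcal{U}) = (\Omega, \mathcal{F})\) and \(L^n(\omega) \triangleq \omega\), which trivially meets condition (a) in Theorem \ref{theo: gen JS local uniform topology 2}. Following the argument from Example \ref{sec: RM} and the second part of Example \ref{sec: semimartingale controls}, Assumption \ref{ass: A3 IP}(iii) together with a standard diagonal extraction produces a subsequence along which $\sup_{s \leq T}|\gamma^{n, T} * \p^n_s - Z^T_s| \to 0$ \(P\)-a.s.\ for every \(T \in \mathbb{N}\); since the conclusion of Theorem \ref{theo: gen JS local uniform topology 2} characterises $X$ independently of the subsequence, passing to such a subsequence is harmless. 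Setting
\[
\u(\omega, t) \triangleq \sup_{n \in \mathbb{N}} \Delta \big(\gamma^{n, \lceil t \rceil} * \p^n\big)_t(\omega), \quad t > 0,
\]
and $\u(\omega, 0) \triangleq 0$, the bound \(\|v^n\| \leq \gamma^n \k(\sup_{s \leq \cdot}\|X^n_s\|)\) from Assumption \ref{ass: A3 IP}(ii), combined with $\|\Delta X^n_t\| \leq \int \|v^n(t, y)\|\, \p^n(\{t\} \times dy)$, yields $(L^n, X^n)(\omega) \in A$ for every \(\omega \in \Omega\) and \(n \in \mathbb{N}\). The uniform convergence $\gamma^{n, T} * \p^n \to Z^T$ on \([0, T]\) implies, as in Example \ref{sec: semimartingale controls}, that for \(P\)-a.e.\ \(\omega\) the set $\{t \in [0, T] \colon \u(\omega, t) \geq a\}$ is finite for every \(a, T > 0\), whence Lemma \ref{lem: set G finite jump} delivers the required property of \(\u\). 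After redefining \(\u\) to be zero on the exceptional \(P\)-null set, Theorem \ref{theo: gen JS local uniform topology 2} applies and identifies \(X\) as a semimartingale on \(\B\) with characteristics \((B(X), C(X), \nu(X))\).

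For the integral representation, the explicit form of \(\nu(X)\) from Assumption \ref{ass: A2 IP} exhibits \(\nu(X)\) as the image of \(\q\) under \((s, y) \mapsto (s, v(s, X, y))\); by standard results on projections and representations of integer-valued random measures, a suitable standard extension of \(\B\) carries a Poisson random measure \(\p\) with intensity \(\q\) whose push-forward under $y \mapsto v(t, X, y)$ equals the jump measure of \(X\) (see, e.g., \cite[Theorem II.7.4]{JS}). Similarly, \(X^c\) has quadratic variation $\int_0^\cdot \sigma_t(X) c_t \sigma_t^*(X)\, dt$, so by a continuous martingale representation theorem on a further extension there is a continuous Gaussian martingale \(W\) with covariance \(C\) satisfying $X^c = \int_0^\cdot \sigma_t(X)\, dW_t$. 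Combined with the explicit drift $B(X) = \int_0^\cdot \int b(t, X, y)\, q(dt, dy)$, this yields the announced SDE. I expect the main technical obstacle to be the construction of \(\u\): one must handle all time strips \((T-1, T]\) simultaneously, extract a single subsequence valid for every \(T\), and verify that the sup-process $\sup_n \Delta(\gamma^{n, T} * \p^n)$ inherits the finite-excedance property from the ucp convergence assumed in \ref{ass: A3 IP}(iii); the representation step is classical once the characteristics are identified.
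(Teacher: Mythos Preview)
Your proposal is correct and follows essentially the same route as the paper: the same choice \((U,\mathcal{U})=(\Omega,\mathcal{F})\) with \(L^n=\mathrm{Id}\), the same diagonal-subsequence extraction to upgrade Assumption~\ref{ass: A3 IP}(iii) to almost sure uniform convergence, the same definition of \(\u\) via \(\sup_n \Delta(\gamma^{n,T}*\p^n)\) on time strips \((T-1,T]\) (set to zero off the good set \(\Omega^o\)), and the same appeal to Lemma~\ref{lem: set G finite jump} and the argument of Example~\ref{sec: semimartingale controls} for the finite-exceedance property. The only cosmetic difference is that the paper cites \cite{KLS} and \cite[Section~XIV.3]{J79} for the final representation step rather than \cite[Theorem~II.7.4]{JS}.
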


\begin{proof}
	Our strategy is to apply Theorem \ref{theo: gen JS local uniform topology 2}. 
	By hypothesis, referring to Theorem \ref{theo: gen JS local uniform topology 2}, (i)' and (ii)' and the continuity assumption from (iii)' hold. 
	Via passing to a subsequence, which we ignore in our notation for simplicity, we can assume that a.s. for all \(T \in \mathbb{N}\)
	\[
	\sup_{s \leq T} | \gamma^{n, T} * \p^n_s - Z^{T}_s | \to 0
	\]
	as \(n \to \infty\). 
	Define 
	\[
	\Omega^o \triangleq \Big\{ \omega \in \Omega \colon \lim_{n \to \infty} \sup_{t \leq T} | \gamma^{n, T} * \p^n_t - Z^{T}_t | = 0, \quad T = 1, 2, \dots \Big\},
	\]
	which then is a full set.
	Let \((U, \mathcal{U}) = (\Omega, \mathcal{F})\) and \(L^{n} (\omega) = \omega\) be the corresponding identity map. Clearly, for these choices part (a) in Theorem \ref{theo: gen JS local uniform topology 2} holds. 
	For \(T \in \mathbb{N}\) and \((\omega, t) \in \Omega \times (T - 1, T]\) we define 
	\[
	\u (\omega, t) \triangleq \begin{cases} \sup_{n \in \mathbb{N}} \int \gamma^{n, T} (\omega; t, y) \p^n(\omega; \{t\} \times dy), & \omega \in \Omega^o,\\ 0,&\text{otherwise}.\end{cases}
	\]
	We also set \(\u (\omega, 0) \equiv 0\) for all \(\omega \in \Omega\).
	By definition of \(\Omega^o\), the fact that \(Z^{T}\) has \cadlag paths and standard properties of \cadlag functions, for every \(\omega \in \Omega\) and \(a > 0\) the set
	\(
	\{t \in (T- 1, T] \colon \u(\omega, t) \geq a \}\)
	is finite, see Example~\ref{sec: semimartingale controls} for more details. Consequently, by Lemma \ref{lem: set G finite jump}, \(\u\) is as in Section \ref{sec: FTD main JS gen} and it remains to verify \eqref{eq: A}.
	For every \(T - 1 < t \leq T\) we have a.s.
	\begin{align*}
	\|\Delta X^n_t\| &\leq \int \|v^n(t, y)\| \p^n(\{t\} \times dy)\\& \leq  \int \gamma^{n, T} (t, y) \p^n (\{t\} \times dy) \k \Big(\sup_{s \leq t} \|X^n_s\|\Big) \\&\leq \u (L^{n}, t) \k \Big(\sup_{s \leq t} \|X^n_s\|\Big) ,
	\end{align*}
	which shows \eqref{eq: A}. In summary, we conclude from Theorem \ref{theo: gen JS local uniform topology 2} that \(X\) is a semimartingale (for its canonical filtration) with characteristics \((B(X), C(X), \nu(X))\). 
	
	The final claim, i.e. the representation of \(X\) as stochastic integrals, follows from classical representation theorems as given in \cite{KLS} and \cite[Section XIV.3]{J79}.
\end{proof}

There is also a version of Corollary \ref{coro: main coro IP} for the case \(\B^n = (\Omega, \mathcal{F}, \F^n, P^n)\), i.e. with varying probability measures. Before we present this version, let us emphasis that even if the probability measures on \(\B^n\) are allowed to be different, we ask them to be quite close in the sense that they converge to each other in \emph{total variation}. 

\begin{corollary}
	Corollary \ref{coro: main coro IP} holds for \(\B^n = (\Omega, \mathcal{F}, \F^n, P^n)\)  and \(\B = (\Omega, \mathcal{F}, \F^X, P)\) under the following assumptions:
	The \(\sigma\)-field \(\mathcal{F}\) is separable, 
	\begin{align}\label{eq: very strong conv}
	\sup_{G \in \mathcal{F}} | P^n(G) - P(G) | \to 0
	\end{align}
	as \(n \to \infty\), Assumptions \ref{ass: A1 IP} and \ref{ass: A2 IP} hold, and Assumption \ref{ass: A3 IP} holds with \textup{(i)} and \textup{(iii)} replaced by 
	\begin{enumerate}
		\item[\textup{(i)'}] \(P^n\)-a.s. \(\gamma^n * \p^n_T < \infty\) for all \(n \in \mathbb{N}\).
		\item[\textup{(iii)'}] There exists a \cadlag measurable process \(Z = Z^{T}\) such that for all \(\varepsilon > 0\)
		\[
		P^n \Big( \sup_{s \leq T}| \gamma^n * \p^n_s - Z_s| \geq \varepsilon \Big) \to 0
		\]
		as \(n \to \infty\).
	\end{enumerate}
\end{corollary}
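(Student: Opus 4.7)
The plan is to mirror the proof of Corollary \ref{coro: main coro IP}, invoking Theorem \ref{theo: gen JS local uniform topology 2} with the choices $(U, \mathcal{U}) = (\Omega, \mathcal{F})$ and $L^n(\omega) = \omega$. The new difficulty is that the probability measures $P^n$ are no longer equal, so part (a) of Theorem \ref{theo: gen JS local uniform topology 2} does not apply. Instead I will use the total variation convergence \eqref{eq: very strong conv} together with the separability of $\mathcal{F}$ to verify part (b).

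\textbf{Relative compactness.} With the above choices, $P^n \circ (L^n)^{-1} = P^n$. By Remark \ref{rem: topology M_m(U)}, the topology on $M_m(U)$ is that of setwise convergence, and \eqref{eq: very strong conv} implies in particular that $P^n(A) \to P(A)$ for every $A \in \mathcal{F}$. Hence $P^n \to P$ in $M_m(U)$, so the family $\{P^n \colon n \in \mathbb{N}\}$ is (even sequentially) relatively compact. Combined with the separability of $\mathcal{F}$, part (b) of Theorem \ref{theo: gen JS local uniform topology 2} is satisfied.

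\textbf{Construction of the control $\u$.} The key observation is that \eqref{eq: very strong conv} allows me to switch between $P^n$- and $P$-probability: for any sequence of random variables $Y_n$ and $\varepsilon > 0$,
\[
P(|Y_n| \geq \varepsilon) \leq P^n(|Y_n| \geq \varepsilon) + \sup_{G \in \mathcal{F}} |P^n(G) - P(G)|,
\]
so convergence in $P^n$-probability entails convergence in $P$-probability. Applying this to condition (iii)' of Assumption \ref{ass: A3 IP}, I obtain $\sup_{s \leq T} |\gamma^{n, T} * \p^n_s - Z^T_s| \to 0$ in $P$-probability for each $T \in \mathbb{N}$. A standard diagonal argument then yields a subsequence (relabelled by $n$) along which the convergence holds $P$-almost surely, simultaneously for all $T \in \mathbb{N}$. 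Let $\Omega^o \in \mathcal{F}$ be the resulting full $P$-set, and define, for $t \in (T-1, T]$ with $T \in \mathbb{N}$,
\[
\u(\omega, t) \triangleq \begin{cases} \sup_{n \in \mathbb{N}} \int \gamma^{n, T}(\omega; t, y) \p^n(\omega; \{t\} \times dy), & \omega \in \Omega^o, \\ 0, & \text{otherwise,} \end{cases}
\]
and $\u(\omega, 0) \equiv 0$. Arguing as in the proof of Corollary \ref{coro: main coro IP} and exploiting the càdlàg property of $Z^T$, for every $\omega \in \Omega$ and $a > 0$ the set $\{t \in (T-1, T] \colon \u(\omega, t) \geq a\}$ is finite, so Lemma \ref{lem: set G finite jump} yields the regularity required by Theorem \ref{theo: gen JS local uniform topology 2}.

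\textbf{Verification of \eqref{eq: all in A^n assp2} and conclusion.} For $\omega \in \Omega^o$ and $t \in (T-1, T]$, item (ii) of Assumption \ref{ass: A3 IP} combined with the definition of $\u$ gives
\[
\|\Delta X^n_t(\omega)\| \leq \int \gamma^{n, T}(\omega; t, y) \p^n(\omega; \{t\} \times dy)\, \k\Big(\sup_{s \leq t} \|X^n_s(\omega)\|\Big) \leq \u(\omega, t) \k\Big(\sup_{s \leq t} \|X^n_s(\omega)\|\Big),
\]
so $(L^n, X^n)(\omega) \in A$. Hence
\[
P^n\bigl((L^n, X^n) \in A\bigr) \geq P^n(\Omega^o) \geq P(\Omega^o) - \sup_{G \in \mathcal{F}} |P^n(G) - P(G)| \to 1.
\]
Together with Assumptions \ref{ass: A1 IP} and \ref{ass: A2 IP}, which supply the remaining hypotheses (i)', (ii)' and the local uniform continuity in (iii)' of Theorem \ref{theo: gen JS local uniform topology 2}, that theorem yields that $X$ is a semimartingale on $\B$ with characteristics $(B(X), C(X), \nu(X))$. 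The representation of $X$ as the asserted system of stochastic integrals then follows from the classical results of \cite{KLS} and \cite[Section~XIV.3]{J79}. The main obstacle is the subsequence extraction and the verification that $\Omega^o$ has $P^n$-probability tending to $1$; this is precisely where the total variation convergence is essential, since it is the bridge that transports both the almost sure construction of $\u$ (performed under $P$) and the jump estimate back to the varying measures $P^n$.
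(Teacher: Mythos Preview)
Your proof is correct and follows essentially the same route as the paper: choose $(U,\mathcal U)=(\Omega,\mathcal F)$ with $L^n=\mathrm{Id}$, use separability of $\mathcal F$ and setwise convergence (via Remark~\ref{rem: topology M_m(U)}) to get part~(b) of Theorem~\ref{theo: gen JS local uniform topology 2}, then use the total variation bound \eqref{eq: very strong conv} to transfer (iii)$'$ into $P$-probability, build $\Omega^o$ and $\u$ exactly as in Corollary~\ref{coro: main coro IP}, and finally push $P(\Omega^o)=1$ back to $P^n(\Omega^o)\to 1$ via \eqref{eq: very strong conv}. The paper's own proof records precisely these three observations and then declares that the argument of Corollary~\ref{coro: main coro IP} goes through unchanged; your write-up simply makes the details explicit.
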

\begin{proof}
	As \(\mathcal{F}\) is separable, \(M_m (\Omega)\) is metrizible thanks to Theorem \ref{theo: Mmc metrizible}. By virtue of Remark \ref{rem: topology M_m(U)}, \eqref{eq: very strong conv} implies \(P^n \to P\) in \(M_m(\Omega)\). Thus, \(\{P^n \colon n \in \mathbb{N}\}\) is relatively compact in \(M_m(\Omega)\). In other words, for \((U, \mathcal{U}) = (\Omega, \mathcal{F})\) and \(L^{n} = \textup{Id}\), part (b) in Theorem \ref{theo: gen JS local uniform topology 2} holds. Under \eqref{eq: very strong conv} we have for every \(\varepsilon > 0\)
	\[
	P^n \Big( \sup_{s \leq T}| \gamma^n * \p^n_s - Z_s| \geq \varepsilon\Big) \to 0 \quad \Longleftrightarrow \quad P \Big( \sup_{s \leq T}| \gamma^n * \p^n_s - Z_s| \geq \varepsilon \Big) \to 0.
	\]
	Furthermore, with \(\Omega^o\) as in the proof of Corollary \ref{coro: main coro IP}, 
	\(
	P^n(\Omega^o) \to P(\Omega^o) = 1.
	\)
	With these observations at hand, the proof of Corollary \ref{coro: main coro IP} needs no further change.
\end{proof}

Finally, we remark that Theorem \ref{theo: gen JS local uniform topology 3} can also be transferred to the current setting. This yields a stability result for random coefficients \(b, \sigma\) and \(v\). We leave the precise statement to the reader.

\end{document}